\documentclass[12pt,a4paper,twoside]{amsart}

\usepackage{mathrsfs}          
\usepackage[all]{xy}    
\usepackage[latin1]{inputenc}
\usepackage{amscd}
\usepackage{latexsym}
\usepackage{multicol}
\usepackage{amsmath}
\usepackage{amssymb}
\usepackage{amsthm}


\setlength{\headheight}{8pt} \setlength{\textheight}{24.5cm}
\setlength{\textwidth}{16cm} \setlength{\oddsidemargin}{0cm}
\setlength{\evensidemargin}{0cm} \setlength{\topmargin}{-1.4cm}
\pagestyle{plain}

\theoremstyle{plain}
\newtheorem{Theorem}{Theorem}[section]
\newtheorem{lemma}[Theorem]{Lemma}
\newtheorem{corollary}[Theorem]{Corollary}
\newtheorem{proposition}[Theorem]{Proposition}


\newtheorem{conjecture2*}{Conjecture}
\newtheorem{remark2*}[conjecture2*]{Remark}
\newtheorem{Theorem2*}[conjecture2*]{Theorem}
\newtheorem{proposition2*}[conjecture2*]{Proposition}
\newtheorem{claim2*}[conjecture2*]{Claim}
\newtheorem{example2*}[conjecture2*]{Example}
\newtheorem{lemma2*}[conjecture2*]{Lemma}

\theoremstyle{definition}

\newtheorem{example}[Theorem]{Example}

\newtheorem{definition}[Theorem]{Definition}
\newtheorem{remark}[Theorem]{Remark}

\newtheorem{pkt}[Theorem]{}
\newtheorem{construction}[Theorem]{Construction}


\newcommand{\sB}{{\mathcal B}}
\newcommand{\sC}{{\mathcal C}}

\newcommand{\sE}{{\mathcal E}}

\newcommand{\sH}{{\mathcal H}}

\newcommand{\sO}{{\mathcal O}}

\newcommand{\sV}{{\mathcal V}}
\newcommand{\sW}{{\mathcal W}}

\newcommand{\sX}{{\mathcal X}}
\newcommand{\sY}{{\mathcal Y}}
\newcommand{\sZ}{{\mathcal Z}}
\newcommand{\A}{{\mathbb A}}
\newcommand{\B}{{\mathbb B}}
\newcommand{\C}{{\mathbb C}}

\newcommand{\G}{{\mathbb G}}

\newcommand{\bP}{{\mathbb P}}
\newcommand{\Q}{{\mathbb Q}}
\newcommand{\R}{{\mathbb R}}
\newcommand{\BS}{{\mathbb S}}

\newcommand{\Z}{{\mathbb Z}}
\newcommand{\id}{{\rm id}}

\newcommand{\Hg}{{\rm Hg}}
\newcommand{\MT}{{\rm MT}}
\newcommand{\SL}{{\rm SL}}
\newcommand{\GL}{{\rm GL}}

\newcommand{\Mon}{{\rm Mon}}
\newcommand{\Sp}{{\rm Sp}}

\newcommand{\SU}{{\rm SU}}
\newcommand{\SO}{{\rm SO}}
\newcommand{\PU}{{\rm PU}}
\newcommand{\U}{{\rm U}}
\newcommand{\GSp}{{\rm GSp}}

\newcommand{\ad}{{\rm ad}}
\newcommand{\der}{{\rm der}}

\newcommand{\diag}{{\rm diag}}
\newcommand{\fg}{\mathfrak{g}}
\newcommand{\fh}{\mathfrak{h}}

\newcommand{\fsu}{\mathfrak{su}}

\title{Calabi-Yau manifolds and generic Hodge groups}
\author{Jan Christian Rohde}
\address{GRK 1463 / Institut f\"ur Algebraische Geometrie\\ Leibniz Universit\"at Hannover\\ Welfengarten 1\\ 30167 Hannover\\ Germany}
\email{rohde@math.uni-hannover.de}
\begin{document}
\maketitle

\begin{abstract}
We study the generic Hodge groups $\Hg(\sX)$ of local universal deformations $\sX$ of Calabi-Yau 3-manifolds with onedimensional complex moduli, give a complete list of all possible choices for $\Hg(\sX)_{\R}$ and determine the latter real groups for known examples.
\end{abstract}

\section*{Introduction}

Let $X$ be a Calabi-Yau 3-manifold with $h^{2,1}(X) = 1$. Moreover let $f:\sX\to B$ denote the local universal deformation of $X$ and $Q$ denote the symplectic form on $H^3(X,\Q)$ given by the cup product. In the generic Hodge group $\Hg(\sX)$ information about the arithmetic of the fibers, the variation of Hodge structures and the monodromy groups of the families containing $X$ as fiber is encoded. Here we classify the possible generic Hodge groups of $\sX$, which is also a natural problem by itself.

In the case of a Calabi-Yau 3-manifold with $h^{2,1}(X) = 1$ we consider a Hodge structure on $H^3(X,\Q)$, which is a vector space of dimension 4. We have much information about the variation of Hodge structures ($VHS$) of families of Calabi-Yau 3-manifolds. For example by Bryant, Griffiths \cite{BG}, we have a classical description of the $VHS$ of such families. By using the Hodge structure on $H^3(X,\Q)$, one can construct the associated Weil- and the Griffiths intermediate Jacobians and their corresponding Hodge structures as introduced by C. Borcea \cite{Bc}. These latter Hodge structures are given by the representations $h_W$ and $h_G$  of the circle group $S^1$ on $H^3(X,\Q)$. In particular the centralizers $C(h_G(i))$ and $C(h_W(i))$ in $\Sp(H^3(X,\R),Q)$ will be helpful. By using these techniques, the theory of bounded symmetric domains \cite{Helga}, the theory of Shimura varieties \cite{De2}, \cite{Dat}, \cite{Milne}, \cite{Mo} and some intricate computations, we obtain the result:

\begin{Theorem} \label{hptres}
Let $\sX$ denote the local universal deformation of a Calabi-Yau 3-manifold $X$ with $h^{2,1}(X) = 1$. Then one of the following cases holds true:
\begin{enumerate}
 \item
$$\Hg(\sX) =\Sp(H^3(X,\Q),Q)$$
\item
$$\Hg(\sX)_{\R} = C(h_G(i))$$
\item \label{drdr} The Lie algebra of $\Hg(\sX)_{\R}$ is given by{\footnotesize
$${\rm Lie}(\Hg(\sX)_{\R})={\rm Span}_{\R}(\left(\begin{array}{cccc}
3i & 0 & 0 & 0\\
0 & i & 0 & 0 \\
0 & 0 & -i & 0\\
0 & 0 & 0 & -3i \end{array}\right),
\left(\begin{array}{cccc}
0 & 1 & 0 & 0\\
1 & 0 & x & 0 \\
0 & \bar x & 0 & 1\\
0 & 0 & 1 & 0 \end{array}\right),
\left(\begin{array}{cccc}
0 & i & 0 & 0\\
-i & 0 & ix & 0 \\
0 & -i\bar x & 0 & i\\
0 & 0 & -i & 0 \end{array}\right))$$}
for some $x \in \C$ with $|x| = \dfrac{2}{\sqrt{3}}$.
\end{enumerate}
\end{Theorem}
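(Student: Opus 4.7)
The plan is to classify the reductive $\R$-algebraic subgroups of $\Sp(H^3(X,\R),Q)\cong\Sp_4(\R)$ that can arise as $\Hg(\sX)_{\R}$, exploiting the constraints imposed by the Bryant-Griffiths description of the $VHS$ of a one-parameter family of Calabi-Yau $3$-manifolds together with the representations $h_G,h_W\colon S^1\to\Sp(H^3(X,\R),Q)$ coming from the Griffiths and Weil intermediate Jacobians. Since $\dim_{\Q} H^3(X,\Q)=4$ and $\Hg(\sX)$ is a connected reductive $\Q$-subgroup of $\Sp_4$, only a short list of real forms is possible.

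First I would fix a Hodge basis adapted to
\[H^3(X,\C)=H^{3,0}\oplus H^{2,1}\oplus H^{1,2}\oplus H^{0,3},\]
in which $Q$ takes a standard shape and $h_G(i)$ is diagonal. In such a basis the centralizer $C(h_G(i))\subset\Sp_4(\R)$ already accounts for one of the listed possibilities, and $C(h_W(i))$ is a related subgroup useful to keep track of CM-type decompositions. The key observation is that $\Hg(\sX)_{\R}$ always contains the image of the generic Hodge cocharacter, hence a non-trivial elliptic torus, and by the Deligne-Andr\'e theorem it also contains the identity component of the algebraic monodromy group. Because the base $B$ is one-dimensional, Griffiths transversality forces the Kodaira-Spencer vector to be a highest-weight vector for the action of this torus, a strong restriction on admissible subgroups.

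Next I would enumerate the reductive real subalgebras of $\fsp_4(\R)$ that (a) contain the Hodge element $dh_G(i)$ and (b) are stable under the Cartan involution attached to the period point. Using the classification of bounded symmetric domains and the explicit root system of $\fsp_4$, the candidates reduce to three: the whole algebra $\fsp_4(\R)$ (case (1)), the centralizer subalgebra corresponding to $C(h_G(i))$ (case (2)), and a three-dimensional simple subalgebra isomorphic to $\fsl_2(\R)$ embedded via the irreducible representation $\mathrm{Sym}^3$. The weights of the Cartan element of such an $\fsl_2$ on its four-dimensional irreducible representation are exactly $(3,1,-1,-3)$, which pins down the first matrix in case (3). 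Solving the remaining $\fsl_2$-bracket relations $[H,E]=2E$ and $[E,F]=H$ while simultaneously imposing preservation of $Q$ then forces the off-diagonal parameter to satisfy $|x|=2/\sqrt{3}$, yielding the Lie algebra displayed in (3).

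The main obstacle is the enumeration step: showing that no further semisimple subalgebra is both compatible with a polarized $\Q$-Hodge structure of Calabi-Yau type and admits a non-trivial one-parameter deformation through CY fibers. This will require combining Griffiths transversality, which rules out most non-elliptic cocharacters inside $\fsp_4$, with a careful analysis of which real forms of $\Sp_4$ can contain the Hodge circle with the correct weight decomposition on $H^3(X,\C)$, here using the machinery of Shimura data from \cite{De2}, \cite{Milne}, \cite{Mo}. Once the list is reduced to the three candidates above, the normalization $|x|=2/\sqrt{3}$ in case (3) is a direct consequence of insisting that the $\fsl_2$-triple integrate to an $\SL_2(\R)$-action whose four-dimensional representation is equivalent to $\mathrm{Sym}^3$ and preserves $Q$.
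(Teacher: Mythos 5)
Your overall strategy coincides with the paper's: use the Hodge circle and the Cartan involution given by conjugation by $h_W(i)=-h(i)$, pass to the adjoint group, invoke the classification of Hermitian symmetric domains inside $\fh_2$, and in the residual three-dimensional case pin down the Lie algebra from the weights $(3,1,-1,-3)$ and the bracket relations, which do indeed force $|x|=2/\sqrt{3}$ exactly as you say. However, the step you yourself flag as ``the main obstacle'' is where essentially all of the work in the paper lives, and your sketch of how to overcome it is not a proof. Concretely, three things are missing. First, before one can even speak of a Hermitian symmetric domain one must show that $\Hg(\sX)_{\R}$ is not compact and that $\Hg^{\ad}(\sX)(\R)^+$ has no compact simple factor and that every factor has a maximal compact subgroup with nondiscrete center (Lemma \ref{ccp}, Proposition \ref{zk}, Corollary \ref{zk2}); the noncompactness argument is not ``Griffiths transversality'' in the abstract but the specific Bryant--Griffiths fact that $\omega(0)$ and $\nabla_{\partial/\partial b}\,\omega(0)$ span $F^2(X)$. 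Second, since $D$ may a priori have dimension $1$, $2$ or $3$, the table of irreducible domains yields additional candidates, notably $\PU(1,2)$ and $\PU(1,1)\times\PU(1,1)$ (and, in dimension $3$, $\B_1^{\,3}$, $\B_1\times\B_2$, $\B_3$, $\SO^*(6)/\U(3)$). Your proposal simply asserts that ``the candidates reduce to three'' without excluding these; the paper's exclusion of $\PU(1,2)$ is a counting argument with complex structures mapping to order-two elements of $\SU(2)$, and the exclusion of $\PU(1,1)\times\PU(1,1)$ requires Lemma \ref{keinhaar}, namely that $\Hg(\sX)_{\R}$ cannot lie in the centralizer $H$ of $h_G(i)h_W(i)$, again via Bryant--Griffiths. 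Neither of these follows from ``which real forms of $\Sp_4$ can contain the Hodge circle,'' since the relevant subgroups are not real forms of $\Sp_4$.

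Third, even once $\Hg^{\ad}(\sX)_{\R}\cong\PU(1,1)$ is established, you list cases (2) and (3) as alternatives but give no mechanism for deciding between them or for ruling out other reductive groups with adjoint $\PU(1,1)$ (e.g.\ a four-dimensional group with a central torus different from $h_G(S^1)$). The paper settles this by explicitly computing the possible centers (Proposition \ref{cent}), showing that a nondiscrete center forces $Z(\Hg(\sX)_{\R})\supseteq h_G(S^1)$ and hence $\Hg(\sX)_{\R}=C(h_G(i))$ (Corollary \ref{uzi}); this computation once more leans on Lemmas \ref{ccp} and \ref{keinhaar}. In short: your endgame for case (3) is right, but the reduction to the three cases --- which is the theorem --- is asserted rather than proved.
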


At present there does not exist any example of a family of Calabi-Yau 3-manifolds known to the author, which has a generic Hodge group satisfying $\eqref{drdr}$. Nevertheless we will determine the generic Hodge groups of known examples of Calabi-Yau 3-manifolds and see that there exists a Calabi-Yau like variation of Hodge structures satisfying $\eqref{drdr}$.

\tableofcontents

\section{Facts and conventions}
Here a Calabi-Yau 3-manifold $X$ is a compact K\"ahler manifold of complex dimension 3 such that
$$H^{1,0}(X) = H^{2,0}(X) = 0 \ \ \mbox{and} \ \ \omega_X \cong \sO_X.$$
We will only study Calabi-Yau 3-manifolds $X$ with $h^{2,1}(X) = 1$ here. Let $f:\sX\to B$ denote the local universal deformation of $X \cong \sX_0$, where $0\in B$.

Moreover recall the algebraic groups
$$S^1 = {\rm Spec}(\R[x,y]/x^2+y^2 - 1) \ \ \mbox{and} \ \ \BS ={\rm Spec}(\R[t,x,y]/t(x^2+y^2)-1),$$
where
$$S^1(\R)=\left\{M = \left(\begin{array}{cc}
a & b \\
-b  & a \end{array}\right) \in \SL_2(\R) \right\}\cong \{z \in \C:|z| = 1\}$$
and
$$\BS(\R) =\left\{\left(\begin{array}{cc}
a & b \\
-b  & a \end{array}\right)\in \GL_2(\R)\right\}\cong \C^*.$$
The group $\BS$ is the Deligne torus given by the Weil restriction $R_{\C/\R}(\G_m)$ and $S^1$ is a subgroup of $\BS$. Let $V$ be a real vector space. By the eigenspace decompositions of $V_{\C}$ with respect to the characters $z^p\bar z^q$ for $p,q\in \Z$ of $\BS$, the real representations $h:\BS \to \GL(V)$ correspond to the Hodge structures on $V$ (see \cite{Dat}, $1.1.1$). If there is some fixed $k$ such that all characters $z^p\bar z^q$ with non-trivial associated eigenspace satisfy $p+q = k$, one says that the Hodge structure has weight $k$. There exists an embedding $w:\G_{m,\R}\hookrightarrow\BS$ given by
$$\G_{m}(\R)\cong \{\diag(a,a) \in \GL_2(\R)\}\stackrel{\id}{\hookrightarrow} \BS(\R).$$
The Hodge structure $h$ has weight $k$, if and only if the weight homomorphism $h\circ w$ satisfies
$$r \to \diag(r^k,\ldots,r^k) \ \ (\forall \ \ r \in \R^* = \G_m(\R))$$
(see \cite{JCR}, Remark $1.1.4$). Hodge structures of some given weight $k$ are determined by the restricted representation $h|_{S^1}$.
For example the integral Hodge structure on $H^3(X,\Z)$ of weight 3 corresponds to the representation
$$h_X:S^1\to \GL(H^3(X,\R)), \ \ h_X(z)v =z^p\bar z^qv \ \  (\forall v \in H^{p,q}(X) \ \ \mbox{with} \ \ p + q = 3).$$
We also denote $h_X$ by $h$ for short. The Hodge group $\Hg(H^3(X,\Q),h)\subset\GL(H^3(X,\Q))$ is the smallest $\Q$-algebraic group $G\subset\GL(H^3(X,\Q))$ with $h(S^1) \subset G_{\R}$. Assume without loss of generality that $B$ is contractible. Thus for each $b \in B$ one has a canonical isomorphism
$$H^3(\sX_b,\Q) \cong R^3f_*(\Q)(B) \cong H^3(\sX_0,\Q)= H^3(X,\Q).$$
By using this isomorphism, a subgroup of $\GL(H^3(\sX_b,\Q))$ can be considered as a subgroup of $\GL(H^3(X,\Q))$. This allows to define an inclusion relation for the Hodge groups of the several fibers, which we use now. The generic Hodge group $\Hg(\sX)$ of $\sX$ is given by the generic Hodge group of the rational variation of Hodge structures ($VHS$) of weight 3 of $\sX$. Recall that the generic Hodge group of a $VHS$ is the maximum of the Hodge groups of all occurring Hodge structures. In an analogue way one can define the Mumford-Tate group $\MT(H^3(X,\Q),h)$ and the generic Mumford-Tate group $\MT(\sX)$ by using $h(\BS)$ instead of $h(S^1)$. One has that $\MT(H^3(\sX_b,\Q),h_b) = \MT(\sX)$ over the complement of countably many proper analytic subsets of the basis (follows from \cite{Mo}, $1.2$). Since
$$\Hg(H^3(\sX_b,\Q),h)=(\MT(H^3(\sX_b,\Q),h)\cap \SL(H^3(\sX_b,\Q)))^{0}$$
(see \cite{JCR}, Lemma $1.3.17$), one has also that $\Hg(H^3(\sX_b,\Q),h_b) = \Hg(\sX)$ over the complement of countably many proper analytic subsets of the basis.

\begin{pkt} \label{redi}
We consider only algebraic groups over fields $K$ of characteristic zero. A group $G$ over $K$ is a torus, if $G_{\bar K} \cong \G_{m,\bar K}^{\ell}$. Moreover a group $G$ is simple, if it does not contain any proper connected normal subgroup. We say that $G$ is semisimple, if its maximal connected normal solvable subgroup is trivial.

A group $G$ is reductive, if it is the almost direct product of a torus and a semisimple group. In this situation the torus can be given by the connected component of identity of the center $Z(G)$ of $G$ and the semisimple group can be given by the derived subgroup $G^{\der}$ generated by the commutators (follows from \cite{Sat}, page 9).

Let $\ad$ denote the adjoint representation. For a reductive group $G$, we have the exact sequence
$$1 \to Z(G) \to G \to G^{\ad}\to 1$$
and the adjoint group $G^{\ad}$ and $G^{\der}$ are isogenous.

We say that a semisimple group is adjoint, if its center is trivial. It is a well-known fact that connected semisimple adjoint $\R$-algebraic groups are direct products of simple subgroups.

It is a well-known fact that for a $\Q$-algebraic group $G$ the group $G_{\R}^0$ is defined over $\Q$. Moreover
$$h(S^1) \subset \Hg(\sX)_{\R}^0 \ \ \mbox{and} \ \ h(\BS) \subset \MT(\sX)_{\R}^0.$$
Thus
$$\Hg(\sX), \ \ \Hg(\sX)_{\R}, \ \ \MT(\sX) \ \ \mbox{and} \ \ \MT(\sX)_{\R}$$
are Zariski connected. Moreover Hodge groups and Mumford-Tate groups of polarized rational Hodge structures are reductive (for example see \cite{JCR}, Theorem $1.3.16$ and Corollary $1.3.20$). From this fact and the definition of reductive groups one concludes that 
$$\Hg^{\der}(\sX)_{\R}, \ \ \Hg^{\ad}(\sX)_{\R}, \ \ \MT^{\der}(\sX)_{\R} \ \ \mbox{and} \ \ \MT^{\ad}(\sX)_{\R}$$
are also Zariski connected.
\end{pkt}

By knowing the associated Lie groups of $\R$-valued points, one can determine the isomorphism classes of some algebraic groups of our interest:

\begin{lemma} \label{redzar}
Assume that $G$ and $H$ are $\R$-algebraic connected semisimple adjoint groups, where $H(\R)$ is a connected Lie group. Moreover let $h:G(\R)^+ \to H(\R)$ be an isomorphism of Lie groups. Then $G$ and $H$ are isomorphic as $\R$-algebraic groups.
\end{lemma}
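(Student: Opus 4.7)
The plan is to recover each algebraic group from its real Lie algebra and then to transport the given isomorphism. Set $\fg := {\rm Lie}(G)$ and $\fh := {\rm Lie}(H)$. By hypothesis $H(\R)$ is connected and $G(\R)^+$ is the identity component of $G(\R)$, so both are real Lie groups with Lie algebras $\fg$ and $\fh$ respectively. Differentiating $h$ at the identity yields an isomorphism $\varphi := dh_e : \fg \to \fh$ of real Lie algebras.

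The core step is to identify $G$, as an $\R$-algebraic group, with $\Aut(\fg)^{0}$, the identity component of the Zariski-closed subgroup of $\GL(\fg)$ consisting of automorphisms of the Lie bracket. Because $G$ is adjoint, its center is trivial and the adjoint morphism $\Ad : G \to \GL(\fg)$ is an injective morphism of $\R$-algebraic groups whose image lies in $\Aut(\fg)$. Because $\fg$ is semisimple, the differential $d\Ad = \ad : \fg \to {\rm Der}(\fg) = {\rm Lie}(\Aut(\fg))$ is an isomorphism. Hence $\Ad : G \to \Aut(\fg)^{0}$ is an injective morphism of connected $\R$-algebraic groups inducing an isomorphism on Lie algebras; in characteristic zero this forces $\Ad$ to be an isomorphism of $\R$-algebraic groups. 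The same reasoning gives $H \cong \Aut(\fh)^{0}$.

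Finally, $\varphi$ induces an isomorphism of $\R$-algebraic groups
$$\Phi : \Aut(\fg) \longrightarrow \Aut(\fh), \qquad \Phi(f) = \varphi \circ f \circ \varphi^{-1},$$
since $\Phi$ is the restriction to the algebraic subgroups $\Aut(\fg)$ and $\Aut(\fh)$ of the algebraic conjugation isomorphism $\GL(\fg) \to \GL(\fh)$ determined by $\varphi$. Passing to identity components and composing with the identifications of the previous paragraph produces the desired chain
$G \cong \Aut(\fg)^{0} \cong \Aut(\fh)^{0} \cong H$
of isomorphisms of $\R$-algebraic groups.

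The hard part is the intermediate identification $G \cong \Aut(\fg)^{0}$ at the algebraic -- rather than merely Lie-theoretic -- level: one needs $\Ad$ to be an algebraic morphism, $\Aut(\fg)$ to be Zariski-closed in $\GL(\fg)$, and the fact that an injective morphism of connected $\R$-algebraic groups that induces an isomorphism on Lie algebras is already an isomorphism. These points use essentially both the semisimplicity of $\fg$ and that we work in characteristic zero. Once they are in hand, the transport of the isomorphism via $\varphi$ is formal.
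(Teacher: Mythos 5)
Your proof is correct, but it takes a genuinely different route from the paper's. Both arguments rest on identifying an adjoint semisimple group with the identity component of the automorphism group of its Lie algebra, but you carry this out entirely over $\R$: you identify $G\cong\Aut(\fg)^0$ and $H\cong\Aut(\fh)^0$ as $\R$-algebraic groups via ${\rm Ad}$, and then transport the Lie-algebra isomorphism $\varphi=dh_e$ by the manifestly $\R$-algebraic conjugation map $f\mapsto\varphi\circ f\circ\varphi^{-1}$. The paper instead complexifies: it identifies $G(\C)^+$ and $H(\C)^+$ with ${\rm Int}(\fg_{\C})$ and ${\rm Int}(\fh_{\C})$, obtains a holomorphic isomorphism $h_{\C}$, invokes Satake's result that such homomorphisms of complex semisimple Lie groups are given by polynomials, and then descends to $\R$ by observing that the imaginary parts of those polynomials vanish on the Zariski closure of $G(\R)^+$, which is $G$. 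Your approach avoids both the complexification and the descent step, at the cost of needing the standard characteristic-zero facts that you correctly flag: ${\rm Ad}$ is injective with image in $\Aut(\fg)^0$ for an adjoint group, all derivations of a semisimple $\fg$ are inner, and an injective morphism of connected algebraic groups inducing an isomorphism on Lie algebras is an isomorphism. The paper's route yields slightly more, namely that the given $h$ itself is the restriction of an $\R$-algebraic isomorphism; your composite in fact also agrees with $h$ on $G(\R)^+$ by naturality of ${\rm Ad}$, though you neither note nor need this, since for the lemma as stated (and for its use in Section 2) the mere existence of an algebraic isomorphism suffices.
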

\begin{proof}
From the assumptions we conclude that there is an isomorphism $dh_{\C}:\fg_{\C}\to\fh_{\C}$. Note that $\fg_{\C}$ and $\fh_{\C}$ are also semisimple as real Lie algebras and that for an arbitrary real Lie algebra $\fg'$ one can define its adjoint Lie group ${\rm Int}(\fg')$ (see \cite{Helga}, {\bf II}. $\S5$). Due to the assumption that $G$ and $H$ are semisimple adjoint, the adjoint representation yields isomorphisms
$$G(\C)^+ \cong {\rm Int}(\fg_{\C}) \ \ \mbox{and} \ \ H(\C)^+ \cong {\rm Int}(\fh_{\C}).$$
 Moreover for a real semisimple Lie algebra $\fg'$ the connected component of identity of the Lie group given by the automorphism group of $\fg'$ coincides with ${\rm Int}(\fg')$ (see \cite{Helga}, {\bf II}. Corollary $6.5$).
Thus one concludes that $G(\C)^+$ and $H(\C)^+$ are the connected components of identity of the Lie groups given by the automorphism groups of $\fg_{\C}$ and $\fh_{\C}$. Therefore one obtains a holomorphic isomorphism $h_{\C}:G(\C)^+\to H(\C)^+$. By \cite{Sat}, {\bf I}. Proposition $3.5$, the semisimple Lie groups $G(\C)^+$ and $H(\C)^+$ are the groups of $\C$-valued points of $\C$-algebraic groups and the homomorphism $h_{\C}$ is a $\C$-algebraic regular map given by some polynomials $f_1, \ldots, f_k$ over $\C$. Since $h_{\C}|_{G(\R)^+}$ coincides with $h:G(\R)^+ \to H(\R)$, one concludes that $\Im f_1, \ldots, \Im f_k$ vanish on the Zariski closure of $G(\R)^+$. The Zariski closure of $G(\R)^+$ is $G$, since we assume that $G$ is Zariski connected. Thus the isomorphism $h$ is $\R$-algebraic.
\end{proof}

\begin{pkt} \label{ctn}
Let $G$ be a connected $\R$-algebraic group and $\theta$ be an involutive automorphism of $G$. We say that $\theta$ is a Cartan involution, if the Lie subgroup
$$G^{\theta}(\R) = \{g \in G(\C)|g = \theta(\bar g)\}$$
of $G(\C)$ is compact. An $\R$-algebraic group $G$ has a Cartan involution, if and only if $G$ is reductive (see \cite{JCR}, Proposition $1.3.10$). In the case of a compact connected $\R$-algebraic group $K$ we have the Cartan involution $\id_K$ (see \cite{JCR}, Example $1.3.11$). Thus all compact connected $\R$-algebraic groups are reductive.
\end{pkt}

The Griffiths intermediate Jacobian $J_G$ resp., the Weil intermediate Jacobian $J_W$ is the torus corresponding to the weight 1 Hodge structure given by
$$F^{1}_G(H^3(X,\C)) = F^2(H^3(X,\C)) \ \ \mbox{resp.,} \  \ F^{1}_W(H^3(X,\C)) = H^{3,0}(X)\oplus H^{1,2}(X).\footnote{Note that in \cite{Bc} one has
$$F^{1}_W(H^3(X,\C)) = H^{0,3}(X)\oplus H^{2,1}(X) \ \ \mbox{instead of} \ \ F^{1}_W(H^3(X,\C)) = H^{3,0}(X)\oplus H^{1,2}(X).$$
But this is only a matter of the chosen conventions and personal preferences.}$$
Let $h_G:S^1\to \GL(H^3(X,\R))$ and $h_W:S^1\to \GL(H^3(X,\R))$ denote the corresponding representations. It is a well-known fact that weight 1 Hodge structures correspond to complex structures. We will use the complex structures
$$h_G(i) \ \ \mbox{and} \ \ h_W(i) = -h_X(i).$$
Moreover $h_W(z)$ and $h_G(z)$ commute and
$$h(z) = h_G^2(z)h_W(z).$$

Let $Q$ denote the symplectic form on $H^3(X,\Q)$ given by the cup product. For the rest of this article let us fix $v_{p,3-p}\in H^{p,3-p}(X)\setminus\{0\}$ with
$$\bar v_{p,3-p} = v_{3-p,p} \ \ \mbox{and} \ \ Q(iv_{3,0},v_{0,3}) = Q(-iv_{2,1},v_{1,2}) = 1.$$
There exist unique vectors satisfying these properties because of the well-known form of the polarization of $H^3(X,\C)$ (see \cite{Voi}, $7.1.2$) and the given Hodge numbers in our case. Thus our alternating form $Q$ on $H^3(X,\C)$ is given by
\begin{equation}\label{ququ}Q(\left(\begin{array}{c}
v_1\\
v_2\\
v_3\\
v_4\end{array}\right),\left(\begin{array}{c}
w_1\\
w_2\\
w_3\\
w_4\end{array}\right)) = (v_1,v_2,v_3,v_4)\left(\begin{array}{cccc}
0 & 0 & 0 & -i\\
0 & 0 & -i & 0\\
0 & i & 0 & 0\\
i & 0 & 0 & 0 \end{array}\right)\left(\begin{array}{c}
w_1\\
w_2\\
w_3\\
w_4\end{array}\right)\end{equation}
with respect to the basis $\{v_{3,0},v_{1,2},v_{2,1},v_{0,3}\}$.

The reader can easily check that each $M\in \GL(H^3(X,\R))$ is given by a matrix
$$M = \left(\begin{array}{cccc}
v_1 & w_1 & \bar w_4 & \bar v_4\\
v_2 & w_2 & \bar w_3 & \bar v_3\\
v_3 & w_3 & \bar w_2 & \bar v_2\\
v_4 & w_4 & \bar w_1 & \bar v_1 \end{array}\right), \ \ \mbox{where} \ \ v_1,\ldots,v_4,w_1,\ldots,w_4\in \C$$
with respect to the basis $\{v_{3,0},v_{2,1},v_{1,2},v_{0,3}\}$ by using the $\R$-vector space isomorphism given by the trace map
$$F^2(H^3(X,\C))\to H^3(X,\R), \ \ w \to w + \bar w.$$
In a similar way on can easily check that the matrices with complex entries, which will occur in this paper, are in fact real.

\begin{remark} \label{blbl}
The conjugation by elements of $h_X(S^1)(\R)$ is given by
{\scriptsize
$$ \left(\begin{array}{cccc}
\xi^3 & 0 & 0 & 0\\
0 & \xi & 0 & 0 \\
0 & 0 & \bar \xi & 0\\
0 & 0 & 0 & \bar \xi^3 \end{array}\right)
\left(\begin{array}{cccc}
a_{1,1} & a_{1,2} & a_{1,3} & a_{1,4}\\
a_{2,1} & a_{2,2} & a_{2,3} & a_{2,4}\\
a_{3,1} & a_{3,2} & a_{3,3} & a_{3,4}\\
a_{4,1} & a_{4,2} & a_{4,3} & a_{4,4}\end{array}\right)
\left(\begin{array}{cccc}
\bar \xi^3 & 0 & 0 & 0\\
0 & \bar \xi & 0 & 0 \\
0 & 0 & \xi & 0\\
0 & 0 & 0 & \xi^3 \end{array}\right)=\left(\begin{array}{cccc}
a_{1,1} & \xi^2 a_{1,2} & \xi^4 a_{1,3} & \xi^6a_{1,4}\\
\bar \xi^2 a_{2,1} & a_{2,2} & \xi^2a_{2,3} & \xi^4 a_{2,4}\\
\bar \xi^4 a_{3,1} & \bar \xi^2a_{3,2} & a_{3,3} & \bar \xi^2 a_{3,4}\\
\bar \xi^6a_{4,1} & \bar \xi^4 a_{4,2} & \bar \xi^2 a_{4,3} & a_{4,4}\end{array}\right)$$}with respect to the basis $\{v_{3,0},v_{2,1},v_{1,2},v_{0,3}\}$.
Moreover the conjugation by the elements of $h_W(S^1(\R))$ is given by:
{\scriptsize
$$ \left(\begin{array}{cccc}
\xi & 0 & 0 & 0\\
0 & \bar \xi & 0 & 0 \\
0 & 0 & \xi & 0\\
0 & 0 & 0 & \bar \xi \end{array}\right)
\left(\begin{array}{cccc}
a_{1,1} & a_{1,2} & a_{1,3} & a_{1,4}\\
a_{2,1} & a_{2,2} & a_{2,3} & a_{2,4}\\
a_{3,1} & a_{3,2} & a_{3,3} & a_{3,4}\\
a_{4,1} & a_{4,2} & a_{4,3} & a_{4,4}\end{array}\right)
\left(\begin{array}{cccc}
\bar \xi & 0 & 0 & 0\\
0 & \xi & 0 & 0 \\
0 & 0 & \bar \xi & 0\\
0 & 0 & 0 & \xi \end{array}\right)
\left(\begin{array}{cccc}
a_{1,1} & \xi^2 a_{1,2} &  a_{1,3} & \xi^2a_{1,4}\\
\bar \xi^2 a_{2,1} & a_{2,2} & \bar \xi^2a_{2,3} & a_{2,4}\\
 a_{3,1} &  \xi^2a_{3,2} & a_{3,3} &  \xi^2 a_{3,4}\\
\bar\xi^2a_{4,1} & a_{4,2} & \bar\xi^2 a_{4,3} & a_{4,4}\end{array}\right)$$}
\end{remark}

\begin{remark} \label{1.2}
The centralizer $C(h(S^1))$ of $h(S^1)$ in $\Sp(H^3(X,\R),Q)$ is given by matrices $\diag(\xi,\zeta,\bar \zeta,\bar \xi)$ with respect to the basis $\{v_{3,0},v_{2,1},v_{1,2},v_{0,3}\}$ as one concludes by the description of the conjugation by elements of $h(S^1)(\R)$ in Remark $\ref{blbl}$. Moreover by explicit computations using $\eqref{ququ}$, one concludes $|\xi| = |\zeta| = 1$. Thus $C(h(S^1))\cong S^1\times S^1$. The group of real symplectic automorphisms in $C(h(S^1))$, whose order is at most 4, is generated by $\diag(1,i,-i , 1) \ \ \mbox{and} \ \ \diag(i,1,1,-i)$. Thus $C(h(S^1))$ contains only the complex structures
\begin{equation} \label{cstr}
\pm h_W(i)=\pm\diag(i,-i,i , -i) \ \ \mbox{and} \ \ \pm h_G(i) =\pm\diag(i,i,-i,-i).
\end{equation}
Moreover $C(h(S^1))$ is generated by $h_W(S^1)$ and $h_G(S^1)$. The kernel of the natural homomorphism
$$h_W(S^1)\times h_G(S^1)\to C(h)$$
obtained from multiplication is given by $\{(1,1),(-1,-1)\}$.
\end{remark}

Let $C(h_G(i))$ and $C(h_W(i))$ denote the respective centralizers of $h_G(i)$ and $h_W(i)$ in $\Sp(H^3(X,\R),Q)$. The centralizer $C(h(i))$ of $h(i)$ in $\Sp(H^3(X,\R),Q)$ coincides with $C(h_W(i))$, since $h_W(i) = -h(i)$. Let $H$ denote the Hermitian form
$$H = iQ(\cdot,\bar \cdot).$$
Since $h(i)$ is a Hodge isometry of the real Hodge structure on $H^3(X,\R)$, one concludes from the definition of $H$ as in \cite{JCR}, Section $4.3$ and \cite{JCR3}, Lemma $3.4$:

\begin{proposition} \label{huhu}
The group $C(h_G(i))$ is given by $\diag(M,\bar M)$, where
$$M\in \U(F^2(X),H|_{F^2(X)})(\R) \cong \U(1,1)(\R)$$
and $\bar M$ acts on $\bar F^2(X)$.
\end{proposition}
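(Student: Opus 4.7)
The plan is to work in the fixed basis $\{v_{3,0},v_{2,1},v_{1,2},v_{0,3}\}$ of $H^3(X,\C)$, in which $h_G(i)$ is the diagonal matrix $\diag(i,i,-i,-i)$ and its $\pm i$-eigenspaces are $F^2(X)=H^{3,0}\oplus H^{2,1}$ and $\bar F^2(X)=H^{1,2}\oplus H^{0,3}$. An element of $\GL(H^3(X,\C))$ commutes with $h_G(i)$ if and only if it preserves these two eigenspaces, i.e.\ is block-diagonal of the form $\diag(M,N)$ with $M,N\in\GL_2(\C)$.

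Next I would impose the reality condition by using the explicit parameterisation of real endomorphisms written out immediately before Remark~\ref{blbl}: the off-diagonal blocks vanish exactly when $v_3=v_4=w_3=w_4=0$, and this forces the lower-right block to be $\bigl(\begin{smallmatrix}\bar w_2 & \bar v_2\\ \bar w_1 & \bar v_1\end{smallmatrix}\bigr)$. This is precisely the matrix of $\bar M$ in the natural conjugate basis $\{\bar v_{3,0},\bar v_{2,1}\}=\{v_{0,3},v_{1,2}\}$ of $\bar F^2(X)$; the apparent transposition of entries merely records that the order $\{v_{1,2},v_{0,3}\}$ used for the last two vectors is the reverse of this conjugate basis. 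Intrinsically, then, the element acts as $\diag(M,\bar M)$.

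To finish I would convert the symplectic condition into a unitary one. Since $Q$ vanishes on $F^2(X)\times F^2(X)$ and on $\bar F^2(X)\times\bar F^2(X)$ (the weight-$3$ Hodge--Riemann bilinear relations), the matrix of $Q$ from \eqref{ququ} is block-antidiagonal, so the equation $A^T Q A = Q$ for $A=\diag(M,\bar M)$ reduces to a single $2\times 2$ identity coupling $M$ with $\bar M$. A direct calculation from \eqref{ququ} shows this identity is equivalent to $H(Mv,Mw)=H(v,w)$ for $v,w\in F^2(X)$, i.e.\ $M\in \U(F^2(X),H|_{F^2(X)})(\R)$. The Hodge--Riemann relations yield $H(v_{3,0},v_{3,0})>0$ and $H(v_{2,1},v_{2,1})<0$, so $H|_{F^2(X)}$ has signature $(1,1)$ and its unitary group is isomorphic to $\U(1,1)(\R)$. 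The only delicate step is the bookkeeping with bases on $\bar F^2(X)$: ensuring that the second block is $\bar M$ and not, say, $J\bar M J$ depends on the basis reversal noted above, and once that is tracked correctly the translation from the symplectic to the unitary condition is essentially automatic.
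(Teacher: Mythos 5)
Your proof is correct and follows essentially the route the paper itself relies on (it gives no self-contained argument but defers to \cite{JCR}, Section 4.3 and \cite{JCR3}, Lemma 3.4): decompose $H^3(X,\C)$ into the $\pm i$-eigenspaces of $h_G(i)$, note that reality forces the second block to be $\bar M$, and convert the symplectic condition into preservation of the Hermitian form $H=iQ(\cdot,\bar\cdot)$, which has signature $(1,1)$ on $F^2(X)$ since $H(v_{3,0},v_{3,0})=1$ and $H(v_{2,1},v_{2,1})=-1$. Your bookkeeping with the reversed ordering $\{v_{1,2},v_{0,3}\}$ versus the conjugate basis $\{v_{0,3},v_{1,2}\}$ is the one point where such a computation can go wrong, and you have tracked it correctly.
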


In an analogue way one concludes:\footnote{It should be pretty clear to the experts that the conjugacy class of $h_W(S^1)$ in $\Sp(H^3(X,\Q),Q)$ yields the upper half plane $\fh_2$, which is also a way to conclude that $C(h_W(i))\cong \U(2)$.}

\begin{proposition} \label{uhuh}
The group $C(h_W(i))$ is given by $\diag(M,\bar M)$, where
$$M\in \U(F^2(X),H|_{H^{3,0}(X)\oplus H^{1,2}(X)})(\R) \cong \U(2)(\R)$$
and $\bar M$ acts on $H^{0,3}(X)\oplus H^{2,1}(X)$.
\end{proposition}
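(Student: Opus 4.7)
The plan is to mirror the proof of Proposition \ref{huhu} step by step, with $h_G(i)$ replaced by $h_W(i)$. By Remark \ref{1.2}, $h_W(i) = \diag(i,-i,i,-i)$ in the ordered basis $\{v_{3,0}, v_{2,1}, v_{1,2}, v_{0,3}\}$, so its $(+i)$-eigenspace is $F^1_W = H^{3,0}(X) \oplus H^{1,2}(X)$ and its $(-i)$-eigenspace is $\overline{F^1_W} = H^{2,1}(X) \oplus H^{0,3}(X)$. Any $N \in \Sp(H^3(X,\R), Q)$ commuting with $h_W(i)$ therefore preserves both eigenspaces after complexification, and the reality condition --- expressed via the antilinear involution $v_{p,q} \mapsto v_{q,p}$ or equivalently the trace-map identification recalled in the excerpt --- forces $N_\C = \diag(M, \overline M)$ with $M$ acting on $F^1_W$ and $\overline M$ acting on $\overline{F^1_W}$ via the induced conjugate-linear map.

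Next, from the explicit polarization \eqref{ququ} one reads off that $Q$ vanishes on $F^1_W \times F^1_W$ and on $\overline{F^1_W} \times \overline{F^1_W}$, while its restriction to $F^1_W \times \overline{F^1_W}$ is non-degenerate. Via $v \mapsto \overline v$ this pairing is identified with $H|_{F^1_W} = iQ(\cdot, \overline{\cdot})$, and the symplectic condition $Q(N\cdot, N\cdot) = Q$ then translates into the condition that $M$ preserves $H|_{F^1_W}$, giving $M \in \U(F^1_W, H|_{F^1_W})(\R)$. This is precisely the content of Proposition \ref{huhu} transplanted to the Weil setting.

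The last step is to identify this unitary group. A short direct computation from \eqref{ququ} yields $H(v_{3,0}, v_{3,0}) = iQ(v_{3,0}, v_{0,3}) = 1$, $H(v_{1,2}, v_{1,2}) = iQ(v_{1,2}, v_{2,1}) = 1$, and $H(v_{3,0}, v_{1,2}) = iQ(v_{3,0}, v_{2,1}) = 0$, so $H|_{F^1_W}$ is positive definite of signature $(2,0)$ and $\U(F^1_W, H|_{F^1_W}) \cong \U(2)$, the compact real form. This is the crucial contrast with Proposition \ref{huhu}: there the analogous restriction $H|_{F^2}$ has signature $(1,1)$ (since $v_{3,0}$ has positive and $v_{2,1}$ has negative length-squared with respect to $H$), yielding the non-compact $\U(1,1)$. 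The compactness here is a Hodge-theoretic reflection of the fact that the Weil intermediate Jacobian is always a polarized abelian variety, whereas the Griffiths one need not be.

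The only genuine obstacle is bookkeeping: tracking the reordering of basis vectors when passing from the $h_G$-decomposition to the $h_W$-decomposition, and verifying that the conjugate-linear identification of $\overline{F^1_W}$ with the complex conjugate of $F^1_W$ is compatible with the sign conventions fixed in \eqref{ququ}. These checks are routine given the explicit real structure already set up in the excerpt, and no new ingredient beyond what was used for Proposition \ref{huhu} is required.
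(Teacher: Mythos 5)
Your argument is correct and is essentially the paper's own (the paper gives no proof beyond ``in an analogue way'' and a citation to \cite{JCR} and \cite{JCR3}): commuting with $h_W(i)$ forces the block form $\diag(M,\bar M)$ on the $\pm i$-eigenspaces $H^{3,0}\oplus H^{1,2}$ and its conjugate, and the symplectic condition translates into unitarity for $H=iQ(\cdot,\bar\cdot)$, which is positive definite there. One caveat: your values $H(v_{3,0},v_{3,0})=H(v_{1,2},v_{1,2})=1$ follow from the normalization $Q(iv_{3,0},v_{0,3})=Q(-iv_{2,1},v_{1,2})=1$ rather than from the displayed matrix in \eqref{ququ}, whose middle $2\times 2$ block carries the opposite sign to that normalization; read literally, \eqref{ququ} would give $H(v_{1,2},v_{1,2})=-1$ and signature $(1,1)$, so you are right to follow the normalization (which is what makes the Weil intermediate Jacobian polarized and yields the compact $\U(2)$), but you should not attribute the computation to the matrix as printed.
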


Thus the unitary groups $\U(1,1)$ and $\U(2)$ will be important:

\begin{remark} \label{2u}
One can describe $\U(1,1)$ and $\U(2)$ explicitly. The special unitary group $\SU(1,1)$ resp., $\SU(2)$ is given by the matrices
$$M_1 = \left(\begin{array}{cc}
\alpha & \beta \\
\bar \beta & \bar \alpha \end{array}\right) \ \ \mbox{resp.,} \ \ M_2 = \left(\begin{array}{cc}
\alpha & \beta \\
-\bar \beta & \bar \alpha \end{array}\right) \ \ \mbox{with} \ \ |\alpha|^2-|\beta|^2 = 1 \ \ \mbox{and} \ \ \alpha,\beta \in \C.$$
Moreover the reductive group $\U(1,1)$ resp., $\U(2)$ is the almost direct product of the simple group $\SU(1,1)$ resp., $\SU(2)$ and its center isomorphic to $S^1$,
where
$$\SU(1,1)\cap Z(\U(1,1)) \cong \{\pm 1\} \cong \SU(2)\cap Z(\U(2)).$$
\end{remark}

We will need an explicit description of the Lie algebra of $\SU(1,1)$:

\begin{remark} \label{lisu}
One has that
$$M_1 =\left(\begin{array}{cc}
a & b \\
\bar b & \bar a \end{array}\right)\in\SU(1,1)(\R)$$ is unipotent, if and only if
$$2 \Re(a) = tr(M_1) = 2.$$ Since each nontrivial unipotent $M_1\in\SU(1,1)(\R)$ has only one Jordan block of length 2, one computes that
$$\log M_1 =M_1 -E_2 = \left(\begin{array}{cccc}
i\Im(a) & b \\
\bar b & -i\Im(a) \end{array}\right).$$
This yields 2 linearly independent vectors of $\fsu(1,1)$ given by $\log(M_1) = M_1 -E_2$ for some unipotent $M_1$. By appending
$$\log(\diag(a,\bar a)) = \diag(iy,-iy)$$
for $|a| = 1$ and $y \in \R$, one obtains a basis of the 3-dimensional algebra $\fsu(1,1)$. Thus for each $N\in\fsu(1,1)(\R)$ there are suitable $u,v,y\in \R$ such that
$$N=
\left(\begin{array}{cccc}
iy & u+iv \\
u-iv & -iy \end{array}\right).
$$
\end{remark}

\begin{remark} \label{keinCartan}
Since the centralizer $C(h_G(i))\cong \U(1,1)$ of $h_G(i)$ is not compact, the conjugation by $h_G(i)$ does not yield a Cartan involution of $\Sp(H^3(X,\R),Q)$.
\end{remark}

\begin{lemma} \label{carty}
The conjugation by $h_W(i)$ and the conjugation by $h_X(i)$ yield the same Cartan involutions on $\Hg(\sX)_{\R}$ resp., $\Hg^{\der}(\sX)_{\R}$.
The conjugation by $\ad(h_W(i))$ yields a Cartan involution on $\Hg^{\ad}(\sX)_{\R}$.
\end{lemma}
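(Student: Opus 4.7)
The first claim reduces to $h_W(i) = -h_X(i)$ together with $-\id$ lying in the center of $\GL(H^3(X,\R))$: conjugation by $h_W(i)$ and by $h_X(i)$ then define the same automorphism of every subgroup of $\GL(H^3(X,\R))$, in particular of $\Hg(\sX)_{\R}$ and of $\Hg^{\der}(\sX)_{\R}$, and the induced automorphism of $\Hg^{\ad}(\sX)_{\R}$ is $\ad(h_W(i)) = \ad(h_X(i))$. Hence it remains to verify the Cartan involution property of this single automorphism.

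The plan for the Cartan property is to produce a positive-definite Hermitian form on $H^3(X,\C)$ which is preserved by the twisted real form, and then read off compactness. Set
$$B(u,v) := Q(u, h_X(i) v), \qquad u,v \in H^3(X,\R).$$
From $h_X(i) \in \Sp(H^3(X,\R),Q)$ and $h_X(i)^2 = -\id$ a short computation yields $B(u,v) = B(v,u)$. Writing the general real vector as $u = a v_{3,0} + b v_{2,1} + \bar b v_{1,2} + \bar a v_{0,3}$ with $a,b \in \C$ and using \eqref{ququ} together with $Q(iv_{3,0},v_{0,3}) = Q(-iv_{2,1},v_{1,2}) = 1$, one obtains $B(u,u) = 2|a|^2 + 2|b|^2$, so $B$ is positive definite. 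Let $\Psi$ be its positive-definite Hermitian extension to $H^3(X,\C)$, namely $\Psi(u,v) = B_{\C}(u,\bar v)$, where $B_{\C}$ denotes the $\C$-bilinear extension of $B$.

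For $g \in \Hg(\sX)(\C)$ satisfying the fixed-point condition $g = h_X(i)\bar g h_X(i)^{-1}$, one combines $\bar g = h_X(i)^{-1} g h_X(i)$ with $g \in \Sp(H^3(X,\C),Q)$ to obtain $\Psi(gu, gv) = \Psi(u,v)$ by a direct calculation. Thus the twisted real form
$$\{g \in \Hg(\sX)(\C) : h_X(i)\bar g h_X(i)^{-1} = g\}$$
is a closed subgroup of the compact unitary group $\U(H^3(X,\C),\Psi)(\R)$, and therefore compact. By \ref{ctn} this is exactly the defining condition for $\ad(h_X(i))$ to be a Cartan involution on $\Hg(\sX)_{\R}$.

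The claim for $\Hg^{\der}(\sX)_{\R}$ follows because it is a closed (characteristic and hence $\ad(h_X(i))$-stable) reductive subgroup of $\Hg(\sX)_{\R}$, and the restriction of a Cartan involution to such a subgroup is again Cartan. For $\Hg^{\ad}(\sX)_{\R}$ one uses the general fact that Cartan involutions descend to the adjoint quotient, the descended involution being precisely conjugation by the image $\ad(h_X(i)) = \ad(h_W(i))$. The only nontrivial computational step of the plan is the positive-definiteness of $B$, which is the Hodge--Riemann positivity of our polarized weight-$3$ Hodge structure and reduces to the explicit expansion recorded above; every other step is a formal verification or a standard property of Cartan involutions as in \cite{JCR}, Section~$4.3$.
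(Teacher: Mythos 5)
Your proof is correct, and it reaches the same key point as the paper -- the positivity of the form $Q(h_W(i)\cdot,\cdot)=Q(\cdot,h_X(i)\cdot)$, verified by the same explicit expansion in the basis $\{v_{3,0},v_{2,1},v_{1,2},v_{0,3}\}$ -- but it packages the surrounding steps differently. The paper cites \cite{Milne}, page 67, for the fact that conjugation by a complex structure $J\in\Sp(H^3(X,\R),Q)$ with $Q(J\cdot,\cdot)>0$ is a Cartan involution of $\Sp(H^3(X,\R),Q)$, and then cites \cite{Sat}, {\bf I}.\ Theorem $4.2$, to restrict it to $\Hg(\sX)_{\R}$; you instead prove this directly by embedding the twisted real form into the compact unitary group of the positive-definite Hermitian form $\Psi(u,v)=Q_{\C}(u,h_X(i)\bar v)$, which makes the argument self-contained and applies at once to $\Hg(\sX)_{\R}$ without passing through the ambient symplectic group. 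For the derived group, the paper writes $h_W(i)=J_CJ_D$ with $J_C$ central and $J_D\in\Hg^{\der}(\sX)(\C)$ and computes that conjugation preserves $\Hg^{\der}(\sX)(\R)$; your observation that $\Hg^{\der}(\sX)_{\R}$ is characteristic, hence automatically stable under the (inner) involution, is shorter and achieves the same thing. Both treatments handle the descent to $\Hg^{\ad}(\sX)_{\R}$ at the same level of brevity. The only point worth making explicit in your write-up is that the twisted real form of $\Hg(\sX)$ is indeed a closed subgroup of $\U(H^3(X,\C),\Psi)(\R)$ because $\Hg(\sX)(\C)$ is Zariski closed and stable under the map $g\mapsto h_X(i)\bar gh_X(i)^{-1}$ (which holds since $h_X(i)\in\Hg(\sX)(\R)$ and $\Hg(\sX)$ is defined over $\R$); this is implicit in what you wrote and is not a gap.
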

\begin{proof}
Note that the conjugation by a complex structure
$$J\in\Sp(H^3(X,\R),Q)(\R) \ \ \mbox{with} \ \ Q(J\cdot,\cdot)> 0$$
yields a Cartan involution of $\Sp(H^3(X,\R),Q)$ (see \cite{Milne}, page 67). Since $Q(h_W(i)\cdot,\cdot)> 0$ as one can verify by using $\eqref{ququ}$ and $\eqref{cstr}$, the conjugation by $h_W(i)$ yields a Cartan involution of $\Sp(H^3(X,\R),Q)(\R)$. Due to the fact that $h_W(i) \in \Hg(\sX)_{\R}$, the conjugation by $h_W(i)$ yields a Cartan involution of the subgroup $\Hg(\sX)_{\R}\subset \Sp(H^3(X,\R),Q)$ (follows from \cite{Sat}, {\bf I}. Theorem $4.2$). Since $h_W(i)= -h_X(i)$, the conjugation by $h_X(i)$ yields the same involution.

Due to the fact that the reductive group $\Hg(\sX)_{\R}$ is the almost direct product of $Z(\Hg(\sX))^0$ and its derived group $\Hg^{\der}(\sX)_{\R}$, one concludes $h_W(i) = J_C \cdot J_D$, where $J_C\in Z(\Hg(\sX))(\C)^0$ and $J_D\in \Hg^{\der}(\sX)(\C)$. Thus
$$h_W(i) \Hg^{\der}(\sX)(\R) h_W(i)^{-1} = J_C J_D \Hg^{\der}(\sX)(\R) J_D^{-1} J_C^{-1} = J_C J_C^{-1} J_D \Hg^{\der}(\sX)(\R) J_D^{-1}$$
$$= J_D \Hg^{\der}(\sX)(\R) J_D^{-1} = \Hg^{\der}(\sX)(\R).$$
Therefore the conjugation by $h_W(i)$ yields a Cartan involution of $\Hg^{\der}(\sX)_{\R}$. This Cartan involution corresponds clearly to a Cartan involution on $\Hg^{\ad}(\sX)_{\R}$ given by the conjugation by $\ad(h_W(i))$.
\end{proof}

Let $K$ be a maximal compact subgroup of $\Hg(\sX)_{\R}$. Since all maximal compact subgroups of a reductive group are conjugate, we assume without loss of generality that $K$ is the subgroup fixed by the Cartan involution obtained from conjugation by $h_W(i)$. Let $C((\ad\circ h)(i))$ denote the centralizer of $(\ad\circ h)(i)$ in $\Hg^{\ad}(\sX)$.

\begin{lemma} \label{centis}
$$C((\ad\circ h)(i)) = \ad(K)=\ad(K\cap\Hg^{\der}(\sX)_{\R})$$
\end{lemma}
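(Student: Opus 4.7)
The plan is to identify both sides of the first equation with the maximal compact subgroup of $\Hg^{\ad}(\sX)_{\R}$ cut out by a Cartan involution, and then to reduce the image from $K$ to $K \cap \Hg^{\der}(\sX)_{\R}$ by decomposing the central torus.

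First, since $h_W(i) = -h(i)$ and $-1 \in Z(\Hg(\sX)_{\R})$ lies in $\ker(\ad)$, we have $\ad(h_W(i)) = (\ad\circ h)(i)$. By Lemma \ref{carty}, conjugation by $(\ad\circ h)(i)$ is a Cartan involution $\theta^{\ad}$ of $\Hg^{\ad}(\sX)_{\R}$; as this involution is inner, its fixed subgroup coincides with the centralizer $C((\ad\circ h)(i))$, which by \ref{ctn} is therefore a maximal compact subgroup of $\Hg^{\ad}(\sX)_{\R}$.

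The inclusion $\ad(K) \subseteq C((\ad\circ h)(i))$ is immediate, since $K$ commutes with $h_W(i)$ (it is the fixed subgroup of conjugation by $h_W(i)$) and $\ad(h_W(i)) = (\ad\circ h)(i)$. For the reverse inclusion I would match Lie algebras: the Cartan decomposition ${\rm Lie}(\Hg(\sX)_{\R}) = \mathfrak{k} \oplus \mathfrak{p}$ descends through the quotient by the $\theta$-stable center to a Cartan decomposition $\mathfrak{k}^{\ad} \oplus \mathfrak{p}^{\ad}$ of ${\rm Lie}(\Hg^{\ad}(\sX)_{\R})$, and $d\ad(\mathfrak{k}) = \mathfrak{k}^{\ad}$ is precisely the Lie algebra of $C((\ad\circ h)(i))$. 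Hence $\ad(K)$ is an open compact subgroup of $C((\ad\circ h)(i))$, and equality then follows from the connectedness of the centralizer of the semisimple element $(\ad\circ h)(i)$ in the connected reductive group $\Hg^{\ad}(\sX)_{\R}$.

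For the second equality I would exploit the almost direct product decomposition $\Hg(\sX)_{\R} = Z(\Hg(\sX))^0_{\R} \cdot \Hg^{\der}(\sX)_{\R}$ from \ref{redi}. Any $k \in K$ writes as $k = zg$ with $z \in Z(\Hg(\sX))^0_{\R}$ and $g \in \Hg^{\der}(\sX)_{\R}$; decompose the real torus via its own Cartan decomposition as $z = z_c z_s$ with $\theta(z_c) = z_c$ and $\theta(z_s) = z_s^{-1}$. Imposing $\theta(k) = k$ and using that $z_c, z_s$ commute with $g$ yields $\theta(g) g^{-1} = z_s^{2}$, which lies in the finite group $Z(\Hg(\sX)) \cap \Hg^{\der}(\sX)$; since an $\R$-split torus has only $2$-torsion in its group of $\R$-points, $z_s^{2} = 1$, so $\theta(g) = g$ and $g \in K \cap \Hg^{\der}(\sX)_{\R}$. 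Because $\ad$ annihilates the central element $z = z_c z_s$, one concludes $\ad(k) = \ad(g) \in \ad(K \cap \Hg^{\der}(\sX)_{\R})$, while the reverse inclusion is trivial.

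The main obstacle I anticipate is the connectedness argument in the second paragraph --- namely, verifying that $\ad(K)$ fills out all of $C((\ad\circ h)(i))$ and not merely its identity component --- which relies on the standard but nontrivial fact that centralizers of semisimple elements in connected reductive algebraic groups are connected.
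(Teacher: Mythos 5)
Your opening is sound and close in spirit to the paper: you correctly observe that $\ad(h_W(i))=(\ad\circ h)(i)$, that conjugation by this element is a Cartan involution of $\Hg^{\ad}(\sX)_{\R}$ whose full fixed subgroup is precisely the centralizer $C((\ad\circ h)(i))$, and hence that this centralizer is a maximal compact subgroup. The genuine gap is in how you close the reverse inclusion $C((\ad\circ h)(i))\subseteq\ad(K)$. After matching Lie algebras you appeal to ``the connectedness of the centralizer of a semisimple element in a connected reductive group.'' That is Steinberg's theorem and requires the derived group to be simply connected; it fails precisely for adjoint groups, which is the case at hand. For instance, in $\PGL_2$ the centralizer of the class of $\diag(1,-1)$ is the full normalizer of the diagonal torus, which is disconnected; likewise, when $\Hg^{\ad}(\sX)_{\R}\cong\PU(1,1)\cong\PGL_{2,\R}$, the fixed subgroup of the Cartan involution on the real points is ${\rm O}(2)/\{\pm 1\}$, which has two components, so an argument that only controls the identity component cannot reach all of $C((\ad\circ h)(i))$. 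The paper never argues through connectedness: it uses Satake's characterization that maximal compact subgroups of real reductive groups are exactly the \emph{full} fixed subgroups of Cartan involutions, together with the correspondence of maximal compact subgroups under the isogeny $\Hg^{\der}(\sX)_{\R}\to\Hg^{\ad}(\sX)_{\R}$; since $C((\ad\circ h)(i))$ is such a fixed subgroup and $\ad(K\cap\Hg^{\der}(\sX)_{\R})$ is the image of the maximal compact subgroup of $\Hg^{\der}(\sX)_{\R}$, the two coincide. You already have every ingredient for this --- you note that $C((\ad\circ h)(i))$ is a maximal compact subgroup --- but you do not use that fact to finish; replacing the Lie-algebra-plus-connectedness step by this maximality/isogeny argument would repair the proof.

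Your treatment of the second equality is your own and is workable in spirit, but it has a smaller defect: the almost direct product only guarantees a decomposition $k=zg$ with $z\in Z(\Hg(\sX))^0(\C)$ and $g\in\Hg^{\der}(\sX)(\C)$ (compare the proof of Lemma \ref{carty}, where $J_C$ and $J_D$ are explicitly taken among $\C$-valued points), whereas your real Cartan decomposition of $z$ and the conclusion $z_s^2=1$ presuppose a decomposition over $\R$. You would need either to justify $K\subseteq Z(\Hg(\sX))^0(\R)\cdot\Hg^{\der}(\sX)(\R)$ or to rerun the computation after complexifying. In the paper this equality again falls out of the isogeny correspondence at no extra cost.
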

\begin{proof}
One has clearly
$$C((\ad\circ h)(i))\supseteq\ad(K)\supseteq\ad(K\cap\Hg^{\der}(\sX)_{\R}).$$
Thus it remains to prove
$$C((\ad\circ h)(i)) \subseteq\ad(K\cap\Hg^{\der}(\sX)_{\R}).$$
Since $\Hg^{\ad}(\sX)_{\R}$ and $\Hg^{\der}(\sX)_{\R}$ are isogenous, we have a correspondence between their maximal compact subgroups. The maximal compact subgroups $K_G$ of real algebraic reductive groups $G$ are the subgroups of $G$ satisfying
$$K_G = \{g \in G \ \ | \ \ \theta(g) = g\}$$
for some Cartan involution $\theta$ (follows from \cite{Sat}, {\bf I}. Corollary $4.3$ and Corollary $4.5$). Recall that the conjugation by $h(i)$ yields a Cartan involution on $\Hg^{\der}(\sX)_{\R}$ and the conjugation by $(\ad\circ h)(i)$ yields a Cartan involution on $\Hg^{\ad}(\sX)_{\R}$. Thus one concludes that the centralizer of $(\ad\circ h)(i)$ is given by $\ad(K\cap\Hg^{\der}(\sX)_{\R})$.
\end{proof}

\section{Computation of the adjoint Hodge group}

In this section we prove the following theorem:

\begin{Theorem} \label{tzu}
The group $\Hg^{\ad}(\sX)_{\R}$ is isomorphic to $\PU(1,1)$ or $\Sp^{\ad}_{\R}(4)$.
\end{Theorem}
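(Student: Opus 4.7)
The plan is to first use the structural results of Section~1 to reduce the statement to a finite classification problem, then eliminate the extraneous candidates by representation-theoretic and Hodge-theoretic arguments.

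First, I would observe that $\Hg^{\ad}(\sX)_{\R}$ is a connected semisimple adjoint $\R$-algebraic group by \ref{redi}, hence a product $G_1 \times \cdots \times G_r$ of simple adjoint real Lie groups. By Lemma~\ref{carty}, conjugation by $\ad(h_W(i))$ is a Cartan involution on $\Hg^{\ad}(\sX)_{\R}$, necessarily inner. This forces each $G_i$ to be of Hermitian type, and minimality of $\Hg(\sX)$ as the $\Q$-algebraic group generated by $h(S^1)$ rules out compact factors (since the connected circle $h(S^1)$ would then project trivially into any simple compact factor). So every $G_i$ is simple non-compact of Hermitian type.

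Next I would impose two quantitative constraints. Since $\Hg^{\der}(\sX)_{\R} \subseteq \Sp(H^3(X,\R),Q) \cong \Sp_4(\R)$, the real rank of $\Hg^{\ad}(\sX)_{\R}$ is at most~$2$. By Lemma~\ref{centis} together with Proposition~\ref{uhuh}, the maximal compact subgroup of $\Hg^{\ad}(\sX)_{\R}$ is contained in $\ad(\U(2) \cap \Hg^{\der}(\sX)_{\R})$, so its dimension is at most~$4$. Running through the classification of simple non-compact Hermitian Lie groups of adjoint type subject to these bounds (and allowing products), the only candidates are $\PU(1,1)$, $\PSU(1,2)$, $\Sp_4^{\ad}(\R)$, and $\PU(1,1) \times \PU(1,1)$.

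Then I would rule out the two extraneous candidates. The group $\PSU(1,2)$ is eliminated because $\SU(1,2)$ has no faithful representation of real dimension at most~$4$ (its smallest nontrivial complex representation is three-dimensional, i.e.\ six-dimensional over $\R$), whereas $\Hg^{\der}(\sX)_{\R}$ admits the faithful $4$-dimensional symplectic representation $H^3(X,\R)$; equivalently, the root system $A_2$ does not embed in $C_2$. For $\PU(1,1) \times \PU(1,1)$, an isogenous cover would act on $H^3(X,\R)$ by a faithful $4$-dimensional real symplectic representation, forcing a decomposition $H^3(X,\Q) = V_1 \oplus V_2$ into two $2$-dimensional rational polarized Hodge substructures. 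Since $h_X$ has distinct weights $\pm 3,\pm 1$ on $H^3(X,\C)$ and complex conjugation pairs opposite weights, one summand — say $V_1$ — carries exactly the Hodge types $(3,0)$ and $(0,3)$. Then $F^3 V_1 = F^2 V_1 = V_1^{3,0}$, and Griffiths transversality applied to the sub-VHS $V_1$ gives $\nabla F^3 V_1 \subseteq F^2 V_1 \otimes \Omega_B^1 = F^3 V_1 \otimes \Omega_B^1$. Hence $F^3 V_1$ is $\nabla$-flat, so locally constant on the contractible base $B$. But $V_2^{3,0} = 0$ implies $V_1^{3,0} = H^{3,0}(X_b)$, and this line must vary nontrivially along $B$ by the Kodaira-Spencer isomorphism $T_0 B \cong H^1(X,T_X) \cong H^{2,1}(X)$ together with Griffiths' description of the period-map differential, a contradiction.

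The main obstacle is this last exclusion, which requires translating an abstract product structure on $\Hg^{\ad}(\sX)_{\R}$ into a Hodge-theoretic decomposition of $H^3(X,\Q)$ and then deploying the specific transversality and Kodaira-Spencer input of the Calabi-Yau case with $h^{2,1}(X)=1$. The other steps are a routine application of the centralizer computations and the structure theory developed in Section~1; once the two exclusions are settled, only $\PU(1,1)$ and $\Sp^{\ad}_{4}(\R)$ remain, as claimed.
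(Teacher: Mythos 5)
Your reduction to a finite list and the two exclusions mirror the paper's strategy, but there is a genuine gap at the very first step, where you assert that $\Hg^{\ad}(\sX)_{\R}$ has no compact simple factor and that every factor is of Hermitian type. The reason you give --- that the connected circle $h(S^1)$ ``would project trivially into any simple compact factor'', contradicting minimality --- is false: a circle maps nontrivially into compact groups all the time, and minimality of the Hodge group only forbids trivial projection onto $\Q$-simple factors of the adjoint group, whereas the factors $G_i$ in your product decomposition are merely $\R$-simple. That compactness of $\R$-factors is not automatic for Hodge groups is shown by Mumford's abelian fourfolds, whose adjoint Hodge group over $\R$ is $\PGL_2(\R)\times \SO(3)\times\SO(3)$. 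Likewise ``the Cartan involution is inner'' does not imply Hermitian type (e.g.\ $\Sp(1,1)$ has an inner Cartan involution but its symmetric space is real hyperbolic $4$-space); Hermitian type requires the maximal compact subgroup of each factor to have nondiscrete center. This is precisely what the paper's Proposition \ref{zk} and Corollary \ref{zk2} establish, and the proof there is not formal: it combines Lemma \ref{ccp} ($\Hg(\sX)_{\R}$ is noncompact, which itself rests on the Bryant--Griffiths description of the $VHS$), Lemma \ref{kaka} ($K^0$ is a torus or $K=C(h_W(i))$) and the centralizer computation of Lemma \ref{komzen} to show that $Z(K)^+$ projects nontrivially onto every factor. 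Without a substitute for this argument your candidate list is not justified; with it, your rank and maximal-compact-dimension bounds do recover the same four candidates as the paper's dimension count for the Hermitian symmetric domain.

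The remainder is essentially sound and partly an improvement. Your elimination of $\PU(1,2)$ via the nonexistence of a faithful $4$-dimensional symplectic representation of $\fsl_3(\C)$ is cleaner than the paper's argument (which counts complex structures in $\SU(2)$ mapping to order-two elements of $C^{\der}((\ad\circ h)(i))$). Your elimination of $\PU(1,1)\times\PU(1,1)$ is, after unwinding, the same mechanism as the paper's Lemma \ref{keinhaar}: $H^{3,0}(\sX_b)$ gets trapped in a flat two-dimensional subspace for all $b$, contradicting the fact that $\omega(0)$ and $\nabla_{\partial/\partial b}\,\omega(0)$ span $F^2(X)$. Two small repairs are needed there: the splitting $V_1\oplus V_2$ can only be asserted over $\R$, not over $\Q$ (the two $\R$-simple factors need not be $\Q$-factors, e.g.\ for a group of the form $R_{F/\Q}\SL_2$), which is harmless since the real splitting suffices; and you must explicitly discard the other faithful $4$-dimensional representation of $\fsl_2\oplus\fsl_2$, namely the tensor product $(2)\otimes(2)$, which does not decompose --- it is ruled out because it preserves a symmetric rather than an alternating form.
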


 For the proof of Theorem $\ref{tzu}$ we need to understand $K$ first:

\begin{lemma} \label{kaka}
The group $K^0$ is a torus or  $K= C(h_W(i))$.
\end{lemma}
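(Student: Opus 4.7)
The plan is to embed $K$ inside $C(h_W(i))\cong\U(2)$, exhibit $h(S^1)$ as an explicit non-$\SU(2)$ circle inside $K$, and then classify closed connected subgroups of $\U(2)$ containing such a circle.

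For the embedding step I would first note that $h_W(i)=-h(i)=h(-i)$ lies in $h(S^1)\subseteq\Hg(\sX)_{\R}$, so conjugation by $h_W(i)$ is an inner automorphism of $\Hg(\sX)_{\R}$ and the fixed subgroup $K$ is exactly $\Hg(\sX)_{\R}\cap C(h_W(i))$; Proposition $\ref{uhuh}$ then identifies $C(h_W(i))$ with $\U(2)$. Because $h_G$ and $h_W$ commute and $h=h_G^2 h_W$, the circle $h(S^1)$ centralizes $h_W(i)$, and combined with $h(S^1)\subseteq\Hg(\sX)_{\R}$ this gives $h(S^1)\subseteq K$. Next, using the $(+i)$-eigenspace $H^{3,0}\oplus H^{1,2}$ of $h_W(i)$ to realize $C(h_W(i))\cong\U(2)$, a direct calculation shows that $h(\xi)$ becomes $\diag(\xi^3,\xi^{-1})$, with determinant $\xi^2$. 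In particular $h(S^1)\not\subseteq\SU(2)$, and $\mathrm{Lie}(h(S^1))=\R\cdot\diag(3i,-i)$ is regular in $\mathfrak{u}(2)$ (its centralizer is the diagonal Cartan).

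The final step is a Lie-algebra classification. Since $\mathfrak{su}(2)$ is simple and has no $2$-dimensional subalgebra, the only $3$-dimensional subalgebra of $\mathfrak{u}(2)=\mathfrak{su}(2)\oplus\R\cdot iI$ is $\mathfrak{su}(2)$ itself, and it does not contain our regular line. The Lie subalgebras of $\mathfrak{u}(2)$ containing $\mathrm{Lie}(h(S^1))$ are therefore just this line, the unique Cartan subalgebra containing it, and $\mathfrak{u}(2)$. Exponentiating, the closed connected subgroup $K^0$ must be a torus of dimension $1$ or $2$, or all of $C(h_W(i))$; in the latter case the inclusion $K\subseteq C(h_W(i))$ forces $K=C(h_W(i))$. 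The main obstacle is the determinant computation that rules out $K^0=\SU(2)$; absent this step, $\SU(2)$ would be an additional possibility for $K^0$ and the dichotomy in the lemma would fail.
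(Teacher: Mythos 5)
Your proposal is correct and takes essentially the same route as the paper: both reduce to $K \subseteq C(h_W(i)) \cong \U(2)$ and exploit that the circle $h(S^1)$ lies in $K$ but not in $\SU(2)$, so that the only non-torus possibility forces $K = C(h_W(i))$. Your explicit determinant computation $\det h(\xi) = \xi^2$ and the classification of subalgebras of ${\rm Lie}(\U(2))$ containing ${\rm Lie}(h(S^1))$ simply make concrete what the paper obtains from reductivity of the compact group $K^0$ together with the fact that $\SU(2)$ has no nontrivial proper semisimple subgroup.
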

\begin{proof}
Since $K^0$ is compact, $K^0$ is reductive (see $\ref{ctn}$). One has without loss of generality
$$K \subseteq C(h_W(i)) \cong \U(2).$$
If $K^0$ is a torus, we are done. Otherwise $K^0$ has a nontrivial semisimple subgroup
$$K^{\der}\subseteq C^{\der}(h_W(i)) \cong \SU(2)$$
(see $\ref{redi}$). Since $\SU(2)$ does not contain any simple proper subgroup, $K^{\der}= C^{\der}(h_W(i))$. From the facts that $h(S^1)$ is not contained in $C^{\der}(h_W(i))$, but contained in $\Hg(\sX)_{\R}$ and commutes with $h_W(i)= h(-i)$, we conclude $K= C(h_W(i))$ in this case.
\end{proof}

\begin{lemma} \label{komzen}
The centralizer of $C^{\der}(h_W(i))$ in $\Sp(H^3(X,\R),Q)$ is given by the center $Z(C(h_W(i)))$ of $C(h_W(i))$.
\end{lemma}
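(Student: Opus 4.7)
The inclusion $Z(C(h_W(i))) \subseteq Z_{\Sp(H^3(X,\R),Q)}(C^{\der}(h_W(i)))$ is immediate: any central element of $C(h_W(i))$ commutes with the subgroup $C^{\der}(h_W(i))$. The substantive content is the reverse inclusion, and my plan is to derive it from Schur's lemma applied to the representation of $C^{\der}(h_W(i))\cong\SU(2)$ on $H^3(X,\C)$.

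By Proposition \ref{uhuh}, this representation is $\diag(M,\bar M)$ on the decomposition $H^3(X,\C)=F^1_W\oplus\bar F^1_W$; each summand is an irreducible $2$-dimensional complex $\SU(2)$-representation (the standard one and its complex conjugate), and for $\SU(2)$ these two irreducibles coincide up to a $\C$-linear intertwiner $J$. Schur's lemma therefore forces any $A\in \mathrm{End}_{\C}(H^3(X,\C))$ centralizing $C^{\der}(h_W(i))$ to have block form
\[ A = \begin{pmatrix} aI_2 & \beta J \\ \gamma J^{-1} & dI_2 \end{pmatrix}, \qquad a,\beta,\gamma,d\in\C, \]
relative to the decomposition $F^1_W\oplus\bar F^1_W$.

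Next I would cut this four-complex-parameter family down using the conditions imposed by $A\in\Sp(H^3(X,\R),Q)$. Reality of $A$ (i.e.\ commutation with the complex conjugation on $H^3(X,\C)$, which swaps $F^1_W$ and $\bar F^1_W$) forces $d=\bar a$ and $\gamma=\bar\beta$. The symplectic condition $A^{T}QA=Q$, with $Q$ given in block form by \eqref{ququ} (whose only nonzero entries pair $F^1_W$ with $\bar F^1_W$), reduces block by block to the two scalar constraints $|a|^{2}-|\beta|^{2}=1$ and $a\bar\beta=0$. Since the first forbids $a=0$, the second gives $\beta=0$, and then $|a|=1$; hence $A$ is a scalar of modulus one, which is precisely an element of $Z(C(h_W(i)))$.

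The main obstacle is the careful block-wise evaluation of $A^{T}QA$ in a basis that makes the decomposition $F^1_W\oplus\bar F^1_W$ manifest (which may require reordering the basis $\{v_{3,0},v_{1,2},v_{2,1},v_{0,3}\}$ used in \eqref{ququ}); the decisive point is that the off-diagonal cross-terms vanish identically, and this vanishing ultimately encodes the positive-definiteness of $H|_{F^1_W}$ (i.e.\ that $C(h_W(i))\cong\U(2)$ rather than $\U(1,1)$), which is what prevents the centralizer from being strictly larger than the one-dimensional group $Z(C(h_W(i)))$.
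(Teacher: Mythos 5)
Your proof is correct, and it reaches the paper's conclusion by a genuinely different first step. The paper pins down the shape of a centralizing element $M$ by direct matrix algebra: it writes $M$ in $2\times 2$ blocks, imposes commutation with the two specific elements $\diag(i,-i,i,-i)$ and a real rotation $N$ of $C^{\der}(h_W(i))(\R)$, and together with reality of $M$ arrives at the two-complex-parameter family (scalar diagonal blocks plus one intertwiner-multiple in each off-diagonal block). You obtain exactly the same family from Schur's lemma plus the fact that the standard representation of $\SU(2)$ is irreducible and self-conjugate, then halve it by reality. From there the two arguments coincide: the condition $M^tQM=Q$ produces precisely the two constraints you predict (in the paper's notation $|z|^2-|y|^2=1$ and $yz=0$), so the off-diagonal parameter vanishes and the diagonal one has modulus one, landing in $Z(C(h_W(i)))$. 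Your route is more conceptual and explains \emph{why} the centralizer in $\GL_4(\C)$ is four-dimensional before any symplectic considerations; the paper's is elementary and self-contained. One small caveat: your closing sentence, that the vanishing of the cross-terms ``encodes the positive-definiteness of $H|_{F^1_W}$,'' is an interpretive aside and is phrased misleadingly --- the cross-terms do not vanish identically, rather their required vanishing (because $Q$ has zero entries pairing $F^1_W$ with itself is false; the relevant zeros are those of $Q$ in the positions where the cross-terms land) is the constraint $a\bar\beta=0$ that kills $\beta$. Nothing in the proof depends on that remark, and the load-bearing steps are all sound.
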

\begin{proof}
Recall the description of $C^{\der}(h_W(i))\cong \SU(2)$ in Proposition $\ref{uhuh}$ and the description of $\SU(2)$ in Remark $\ref{2u}$. Thus $N \in C^{\der}(h_W(i))(\R)$ is given by
$$N = \left(\begin{array}{cccc}
a & b & 0 & 0\\
-\bar b  &  \bar a & 0 & 0\\
0 & 0 & a & -b\\
0 & 0 & \bar b  & \bar a \end{array}\right) \ \ \mbox{with} \ \ |a|^2+|b|^2 = 1$$
with respect to the basis $\{v_{3,0},v_{1,2},v_{2,1},v_{0,3}\}$. Now let 
$$M = \left(\begin{array}{cc}
A & B\\
C & D \end{array}\right)\in \Sp(H^3(X,\R),Q)(\R)$$
commute with each $N \in C^{\der}(h_W(i))(\R)$ for some suitable $A,B,C,D \in \GL_2(\C)$. Thus $M$ commutes with $\diag(i,-i,i,-i)$ and one computes that $A,B,C,D$ are diagonal matrices. Moreover one has that $M$ has to commute with
$$N = \left(\begin{array}{cccc}
0 & 1 & 0 & 0\\
-1  &  0 & 0 & 0\\
0 & 0 & 0 & -1\\
0 & 0 & 1  & 0 \end{array}\right).$$
From this fact and the assumptions that  $M$ is a real matrix and commutes with each element of $C^{\der}(h_W(i))(\R)$, one concludes
$$M =\left(\begin{array}{cccc}
z & 0 & \bar y & 0\\
0 & z & 0 & -\bar y\\
-y & 0 & \bar z & 0\\
0 & y & 0 & \bar z \end{array}\right).$$
Moreover one computes that
$$M^tQM=\left(\begin{array}{cccc}
z & 0 & - y & 0\\
0 & z & 0 &  y\\
\bar y & 0 & \bar z & 0\\
0 & -\bar y & 0 & \bar z \end{array}\right)
\left(\begin{array}{cccc}
0 & 0 & 0 & -i\\
0 & 0 & -i & 0\\
0 & i & 0 & 0\\
i & 0 & 0 & 0 \end{array}\right)
\left(\begin{array}{cccc}
z & 0 & \bar y & 0\\
0 & z & 0 & -\bar y\\
-y & 0 & \bar z & 0\\
0 & y & 0 & \bar z \end{array}\right)$$
$$=
\left(\begin{array}{cccc}
0 & -2iyz & 0 & i|y|^2-i|z|^2\\
2iyz & 0 & i|y|^2-i|z|^2 & 0\\
0 & -i|y|^2+i|z|^2 & 0 & -2i\bar y\bar z\\
-i|y|^2+i|z|^2 & 0 & 2i\bar y\bar z & 0 \end{array}\right).$$
Hence $M\in \Sp(H^3(X,\R),Q)$, only if $y= 0$ and $|z|=1$. Thus $M \in Z(C(h_W(i)))$.
\end{proof}

\begin{lemma} \label{ccp}
$\Hg(\sX)_{\R}$ cannot be compact.
\end{lemma}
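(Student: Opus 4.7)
My plan is to argue by contradiction using the non-triviality of the variation of Hodge structures. Assume $\Hg(\sX)_{\R}$ is compact and set $\fg:=\mathrm{Lie}(\Hg(\sX)_{\R})$. First, I would invoke Lemma \ref{carty}: conjugation by $h_W(i)$ is a Cartan involution of $\Hg(\sX)_{\R}$. On a compact connected reductive real group every Cartan involution is trivial, so $\mathrm{ad}(h_W(i))$ acts as the identity on $\fg$; equivalently, $\fg_{\C}$ sits in the $+1$-eigenspace of $\mathrm{ad}(h_W(i))$ inside $\fsp(H^3(X,\R),Q)_{\C}$.

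Second, I would decompose $\fsp(H^3(X,\R),Q)_{\C}$ into its Hodge pieces $\fsp^{p,-p}$ ($p=-3,\ldots,3$) with respect to the weight-$0$ structure induced by $h$. Using the explicit form $h_W(i)=\diag(i,-i,i,-i)$ from \eqref{cstr} in the basis $\{v_{3,0},v_{2,1},v_{1,2},v_{0,3}\}$, the adjoint action multiplies the matrix unit $E_{ab}$ by $\lambda_a\lambda_b^{-1}$; reading off the Hodge types via $p_1=3,p_2=2,p_3=1,p_4=0$ one sees that $\mathrm{ad}(h_W(i))$ acts as $(-1)^p$ on $\fsp^{p,-p}$. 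In particular it acts as $-1$ on the horizontal piece $\fsp^{-1,1}$, so the first step forces $\fg_{\C}\cap\fsp^{-1,1}=0$.

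The contradiction would then come from exhibiting a non-zero horizontal element of $\fg_{\C}$. Since $\sX\to B$ is the local universal deformation of a CY $3$-fold with $h^{2,1}(X)=1$, Bogomolov-Tian-Todorov makes $B$ smooth of dimension $1$, and the Calabi-Yau condition $\omega_X\cong\sO_X$ identifies the infinitesimal period map $H^1(X,T_X)\to\mathrm{Hom}(H^{3,0}(X),H^{2,1}(X))$ with contraction against the holomorphic volume form, which is an isomorphism; in particular $d\Phi_0$ of the period map $\Phi:B\to D$ is non-zero. By the attainment of the generic Hodge group on the complement of countably many proper analytic subsets, recalled in Section $1$, the image of $\Phi$ lies in a single $\Hg(\sX)(\R)^0$-orbit inside $D$, so $d\Phi_0$ is the image in $T_{\Phi(0)}D$ of some $\xi\in\fg_{\R}$. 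Griffiths transversality places this non-zero vector in the horizontal direction, yielding $\fg_{\C}\cap\fsp^{-1,1}\neq 0$ and contradicting step two.

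The step I expect to require the most care is the last one: the distinguished fiber $\sX_0$ is not necessarily a ``generic'' one in the sense of Section $1$, so one must justify that $d\Phi_0$ really lies in $\fg_{\R}$ and not only in $\fsp(H^3(X,\R),Q)$. This should follow from $h_b(S^1)\subseteq\Hg(\sX)_{\R}$ for all $b$ near $0$ together with the fact that, on the generic locus, the nearby Hodge structures are all conjugate inside $\Hg(\sX)(\R)$; passing to the limit $b\to 0$ while using that $\Hg(\sX)_{\R}$ is Zariski closed then traces $d\Phi_0$ inside the orbit.
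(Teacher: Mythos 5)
Your argument is correct in outline but follows a genuinely different route from the paper, and one step needs a better justification than the one you give. The paper never touches the Hodge decomposition of $\fsp(H^3(X,\R),Q)_{\C}$ at this point; instead it feeds compactness into Lemma \ref{kaka} to conclude that $\Hg(\sX)_{\R}$ is either a torus inside $C(h(S^1))$ or equal to $C(h_W(i))$, observes (exactly as in your first step) that a Cartan involution of a compact connected group is trivial, so that every $h_{\sX_b}(i)$ must be central, and then uses the scarcity of complex structures in $C(h(S^1))$ and in $Z(C(h_W(i)))$ together with continuity of the $VHS$ to force $h_{\sX_b}(i)=h_X(i)$ for all $b$; this pins $H^{3,0}(\sX_b)$ inside ${\rm Eig}(h_X(i),-i)={\rm Span}(v_{3,0},v_{1,2})$ and contradicts Bryant--Griffiths. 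Your steps one and two (triviality of the Cartan involution on $\fg:={\rm Lie}(\Hg(\sX)_{\R})$, and the sign $(-1)^p$ of $\ad(h_W(i))$ on $\fsp^{p,-p}$, hence $\fg_{\C}\cap\fsp^{-1,1}=0$) are correct and amount to a linearization of the same idea. Both proofs end by invoking the Bryant--Griffiths nondegeneracy, so the essential inputs are comparable; your version is more conceptual, while the paper's stays at the group level and uses only statements it has already established.

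The weak point is your third step. From ``$\Hg(H^3(\sX_b,\Q),h_b)=\Hg(\sX)$ off countably many proper analytic subsets'' it does not formally follow that the $h_b$ are all conjugate under $\Hg(\sX)(\R)^0$, nor that the period map image lies in a single orbit: constancy of the Hodge group is a statement about the groups generated by the $h_b(S^1)$, not about the conjugacy classes of the homomorphisms themselves, so the inference as written is not valid. What rescues the step is precisely the compactness you are assuming: every $h_b$ is a homomorphism $S^1\to\Hg(\sX)(\R)$ into one fixed compact Lie group, rigidity of homomorphisms of compact groups makes the conjugacy class locally constant, hence constant on the connected base, and compactness of the orbit makes it closed, so the distinguished fibre $b=0$ also lands in it; then $d\Phi_0$ is tangent to the orbit, and Griffiths transversality together with local Torelli produces a nonzero element of $\fg_{\C}\cap\fsp^{-1,1}$. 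If you supply that rigidity-plus-closedness argument (or cite the theorem that period maps land in Mumford--Tate domains), your proof is complete; as written, the single-orbit claim is asserted rather than proved.
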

\begin{proof}
Assume that $\Hg(\sX)_{\R}$ would be compact. Thus one concludes that $\Hg(\sX)_{\R} = K$ is a torus or $\Hg(\sX)_{\R} = C(h_W(i))$ (see Lemma $\ref{kaka}$). In the first case one concludes $\Hg(\sX)_{\R} \subseteq C(h(S^1))$, which contains only 4 complex structures (see Remark $\ref{1.2}$). In the second case the Cartan involution obtained from conjugation by $h_{\sX_b}(i)\in C(h_W(i))$ fixes each element of the compact group $\Hg(\sX)(\R) = C(h_W(i))(\R)$ for each $b \in B$. Hence each $h_{\sX_b}(i)$ has to be contained in the center of $C(h_W(i))$. Note that $Z(C(h_W(i)))$ has only the two complex structures $\pm h_W(i)$. Thus in any case $h(i) = h_{\sX_b}(i)$ for each $b \in B$, since the $VHS$ is continuous and for each $b \in B$ one obtains
$$H^{3,0}(\sX_b)\subset {\rm Eig}(h_{\sX_b}(i), -i)= {\rm Eig}(h_X(i), -i) = {\rm Span}(v_{3,0}, v_{1,2}).$$
But this contradicts the fact that $\omega(0)$ and $\nabla_{\frac{\partial}{\partial b}}\omega(0)$ generate $F^2(X)$, where $\omega$ denotes a generic section of the $F^3$-bundle in the $VHS$ (see \cite{BG}).
\end{proof}

Now we change for a moment to the language of semisimple adjoint Lie groups. Connected semisimple adjoint Lie groups are direct products of their normal simple subgroups (see \cite{JCR}, Lemma $1.3.8$). The group $\Hg^{\ad}(\sX)(\R)^+$ is an example of a connected semisimple adjoint Lie group.

\begin{proposition} \label{zk}
There does not exist any nontrivial direct factor $F$ of $\Hg^{\ad}(\sX)(\R)^+$ such that $$Z(K)(\R)^+\subset \ker (pr_F\circ \ad).$$
\end{proposition}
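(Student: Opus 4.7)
The plan is to argue by contradiction. Suppose $F$ is a nontrivial direct factor of $\Hg^{\ad}(\sX)(\R)^+$ with $Z(K)(\R)^+\subseteq\ker(pr_F\circ\ad)$, and write $\Hg^{\ad}(\sX)(\R)^+ = F\times F^{\perp}$.

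First, I would locate $h_W(i)$ inside $Z(K)(\R)^+$ in each case of Lemma $\ref{kaka}$. If $K=C(h_W(i))\cong\U(2)$, then by the block description of Proposition $\ref{uhuh}$ the connected center of $K$ is the scalar subgroup $h_W(S^1)$, which contains $h_W(i)$. In the torus case, $Z(K)(\R)^+ = K^0(\R)\supseteq h(S^1)(\R)\ni h(-i)=h_W(i)$. Hence $\ad(h_W(i))\in F^{\perp}$. Since by Lemma $\ref{carty}$ conjugation by $h_W(i)$ induces the Cartan involution of $\Hg^{\ad}(\sX)_{\R}$ and this involution respects the direct product decomposition, its restriction to $F$ is trivial. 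A connected semisimple adjoint real Lie group has a trivial Cartan involution if and only if it is compact, so $F$ is compact and hence contained in the maximal compact subgroup $\ad(K)$ of $\Hg^{\ad}(\sX)(\R)^+$.

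In the torus case $\ad(K)$ is abelian and therefore cannot contain the nontrivial semisimple subgroup $F$, giving an immediate contradiction. The main obstacle is the case $K=C(h_W(i))\cong\U(2)$. Here $\ad(K)\cong\PU(2)\cong\SO(3)$ is compact simple of dimension three, so the compact direct factor $F\subseteq\ad(K)$ must coincide with $\ad(K)$, and its lift inside $\Hg^{\der}(\sX)_{\R}$ is $\tilde F=C^{\der}(h_W(i))\cong\SU(2)$.

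The endgame then splits according to whether $F^{\perp}$ is trivial, and this is where Lemma $\ref{komzen}$ is decisive. If $F^{\perp}$ is nontrivial, its preimage $\tilde{F^{\perp}}\subseteq\Hg^{\der}(\sX)_{\R}$ is a nontrivial connected semisimple subgroup centralizing $\tilde F$; but Lemma $\ref{komzen}$ identifies the centralizer of $C^{\der}(h_W(i))$ in $\Sp(H^3(X,\R),Q)$ with the abelian torus $Z(C(h_W(i)))=h_W(S^1)$, which contains no nontrivial semisimple subgroup. If $F^{\perp}$ is trivial, then $\Hg^{\der}(\sX)_{\R}=\tilde F\cong\SU(2)$ is compact, and Lemma $\ref{komzen}$ forces $Z(\Hg(\sX))\subseteq h_W(S^1)$, so $Z(\Hg(\sX))^0$ is a compact torus. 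Then $\Hg(\sX)_{\R}=Z(\Hg(\sX))^0\cdot\Hg^{\der}(\sX)_{\R}$ is compact, contradicting Lemma $\ref{ccp}$.
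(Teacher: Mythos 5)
Your argument is correct and follows essentially the same route as the paper: you show that $F$ is pointwise fixed by the Cartan involution coming from $h_W(i)\in Z(K)(\R)^+$, hence compact and contained in $\ad(K)$, you identify its lift in $\Hg^{\der}(\sX)_{\R}$ with $C^{\der}(h_W(i))\cong\SU(2)$, and you invoke Lemma \ref{komzen} to rule out a commuting semisimple complement; the paper does exactly this, the only organisational difference being that it uses Lemma \ref{ccp} at the outset (to see that $\ad(h(i))\neq e$ lies in $\ker(pr_F)$, so the complementary factor is automatically nontrivial), whereas you use it at the end to dispose of the case $F^{\perp}=\{e\}$. One small inaccuracy: $\ad(K)$ need not be $\PU(1,1)$'s compact cousin $\PU(2)$ --- it is $\U(2)/Z(\Hg(\sX))$, which is $4$-dimensional whenever $Z(\Hg(\sX))$ is finite --- but your conclusion survives, since a nontrivial connected semisimple subgroup of the compact reductive group $\ad(K)$ must lie in $\ad(K)^{\der}=\ad(C^{\der}(h_W(i)))\cong\PU(2)$ and hence, $\PU(2)$ having no proper nontrivial connected semisimple subgroups, must equal it.
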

\begin{proof}
Assume that $F$ is a direct factor of $\Hg^{\ad}(\sX)(\R)^+$ with
$$Z(K)(\R)^+\subset \ker ( pr_F\circ \ad).$$
We show that $F$ is trivial. Since $\Hg(\sX)_{\R}$ cannot be compact (see Lemma $\ref{ccp}$), the maximal compact subgroup $K$ associated to the Cartan involution obtained from conjugation by $h(i)$ is a proper subgroup. Thus $h(i)$ is not contained in the center of $\Hg(\sX)_{\R}$. Recall that $K^0$ is a torus or $K = C(h(i))$. Since $h(S^1)(\R)$ is connected, $h(i) \in Z(K)(\R)^+$ in both cases. Thus from our assumption we conclude that $F$ is contained in the maximal compact subgroup associated to the Cartan involution obtained from conjugation by $(\ad\circ h)(i)$. Consider the projection map $pr_F: \Hg^{\ad}(\sX)(\R)^+ \to F$. Since $$(\ad\circ h)(i)\in G:=\ker (pr_F)\subset \Hg^{\ad}(\sX)(\R)^+,$$
one concludes that $G$ is non-trivial semisimple adjoint. Note that
$$\ker (pr_G)= F \ \ \mbox{and} \ \ \Hg^{\ad}(\sX)(\R)^+ = F\times G,$$
since connected semisimple adjoint Lie groups are direct products of their normal simple subgroups (see \cite{JCR}, Lemma $1.3.8$). Let
$$F' = \ker(pr_G \circ \ad|_{\Hg^{\der}(\sX)(\R)})^+ \ \ and \ \ G' =\ker(pr_F \circ \ad|_{\Hg^{\der}(\sX)(\R)})^+.$$
Since $\Hg^{\ad}(\sX)_{\R}$ and $\Hg^{\der}(\sX)_{\R}$ are isogenous, one concludes that
$F'$ and $G'$ commute. Since $F'$ is a semisimple group with elements fixed by the Cartan involution obtained from conjugation by $h(i)$ and $C^{\der}(h(i))(\R)\cong \SU(2)(\R)$ contains no semisimple proper subgroup, one concludes $$F' = C^{\der}(h(i))(\R) \ \ \mbox{or} \ \ F' = \{e\}.$$
Only the torus $Z(C(h(i)))$ commutes with $C^{\der}(h(i))$ (see Lemma $\ref{komzen}$). Thus from the fact that $G'$ is nontrivial semisimple and commutes with $F'$, we conclude $F' = \{e\}$. Thus $F$ is trivial.
\end{proof}

The connected semisimple adjoint Lie group $\Hg^{\ad}(\sX)(\R)^+$ is a direct product of connected simple adjoint subgroups. Let $F$ be one of these nontrivial direct factors. The maximal compact subgroup of $\Hg^{\ad}(\sX)(\R)^+$ is given by
$$\ad(K(\R))\cap\Hg^{\ad}(\sX)(\R)^+$$
(follows from Lemma $\ref{centis}$).
Thus for the maximal compact subgroup $K_F$ of $F$ one concludes that $K_F^+ = (pr_F\circ \ad)(K(\R)^+)$. Due to the fact that $Z(K)(\R)^+$ is not contained in $\ker(pr_F\circ \ad)$ and not discrete as one concludes from Lemma $\ref{kaka}$, the maximal compact subgroup $K_F$ has a nondiscrete center. Since $F$ has a trivial center, $K_F \neq F$ and one concludes:

\begin{corollary} \label{zk2}
The connected adjoint Lie group $\Hg^{\ad}(\sX)(\R)^+$ is a direct product of noncompact simple adjoint subgroups, whose maximal compact subgroups have nondiscrete centers.
\end{corollary}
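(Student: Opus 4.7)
The plan is to combine the structure theorem for connected semisimple adjoint real Lie groups (cited as Lemma 1.3.8 of \cite{JCR}) with the two preceding results, Lemma \ref{centis} and Proposition \ref{zk}, to handle each simple direct factor in turn.

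First I would decompose $\Hg^{\ad}(\sX)(\R)^+$ as a direct product $\prod_{i} F_i$ of its connected simple adjoint normal subgroups; this is precisely the structure result quoted just before the corollary. It then suffices to show that each factor $F_i$ is noncompact and that its maximal compact subgroup has nondiscrete center, because a direct product of such factors is again of that form.

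Fix one factor $F = F_i$ and let $pr_F : \Hg^{\ad}(\sX)(\R)^+ \to F$ be the projection. By Lemma \ref{centis}, the maximal compact subgroup of $\Hg^{\ad}(\sX)(\R)^+$ associated with the Cartan involution induced by $(\ad \circ h)(i)$ is $\ad(K(\R)) \cap \Hg^{\ad}(\sX)(\R)^+$, so its projection $K_F^+ := (pr_F \circ \ad)(K(\R)^+)$ is the identity component of a maximal compact subgroup $K_F$ of $F$. Next I would locate a nondiscrete piece of the center of $K_F$: by Lemma \ref{kaka}, $K^0$ is either a torus or the whole group $C(h_W(i))$, and in both cases $Z(K)(\R)^+$ is nondiscrete (in the first case $K^0 \subset Z(K)$; in the second case the center of $\U(2)$ is a circle). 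Proposition \ref{zk} tells us that $Z(K)(\R)^+$ is not contained in $\ker(pr_F \circ \ad)$, so $(pr_F \circ \ad)(Z(K)(\R)^+)$ is a nondiscrete connected central subgroup of $K_F$.

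Finally, since $F$ is adjoint it has trivial center, so $K_F$ cannot equal $F$; hence $F$ is noncompact. Combining these two facts for every direct factor yields the corollary. No step should present a real obstacle — the entire argument is a bookkeeping consequence of the preceding lemmas — the only care needed is in verifying that $Z(K)(\R)^+$ really is nondiscrete in each of the two alternatives furnished by Lemma \ref{kaka}, and in keeping track of the distinction between a Lie group and its identity component when transferring Cartan-involution information between $\Hg^{\der}(\sX)_{\R}$ and $\Hg^{\ad}(\sX)_{\R}$.
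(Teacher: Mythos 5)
Your proposal is correct and follows essentially the same route as the paper: decompose $\Hg^{\ad}(\sX)(\R)^+$ into simple adjoint factors, identify $K_F^+$ as $(pr_F\circ\ad)(K(\R)^+)$ via Lemma \ref{centis}, use Proposition \ref{zk} together with the nondiscreteness of $Z(K)(\R)^+$ from Lemma \ref{kaka} to get a nondiscrete center in $K_F$, and conclude noncompactness of $F$ from triviality of $Z(F)$. The only difference is that you spell out the two alternatives of Lemma \ref{kaka} explicitly where the paper merely asserts nondiscreteness, which is a harmless (and welcome) elaboration.
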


Note that each Hermitian symmetric domain is a direct product of irreducible Hermitian symmetric domains (for the definition and more details about Hermitian symmetric domains see \cite{Helga}). If $G$ is a connected simple adjoint noncompact Lie group and $K_G$ is a maximal compact subgroup of $G$ with nondiscrete center, the quotient $G/K_G$ has the structure of a uniquely determined irreducible Hermitian symmetric domain (\cite{Helga}, {\bf XIII}. Theorem $6.1$,). Hence one concludes from Corollary $\ref{zk2}$:

\begin{proposition} \label{juhuu}
The quotient
$$D =\Hg^{\ad}(\sX)(\R)^+/\ad(K(\R))\cap\Hg^{\ad}(\sX)(\R)^+$$
has the structure of an Hermitian symmetric domain.
\end{proposition}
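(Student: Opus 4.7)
The plan is to deduce Proposition \ref{juhuu} directly from Corollary \ref{zk2} together with the cited structure theorem of Helgason, after first checking that the denominator in the quotient is the correct maximal compact subgroup of $\Hg^{\ad}(\sX)(\R)^+$.

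First I would note that, by Corollary \ref{zk2}, one has a decomposition
$$\Hg^{\ad}(\sX)(\R)^+ = F_1 \times \cdots \times F_r,$$
where each $F_j$ is a connected simple adjoint noncompact Lie group whose maximal compact subgroup $K_{F_j}$ has nondiscrete center. Since the maximal compact subgroup of a direct product of reductive Lie groups is the product of the maximal compact subgroups of the factors, $K_1\times\cdots\times K_r$ is a maximal compact subgroup of $\Hg^{\ad}(\sX)(\R)^+$.

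Next I would identify this product with $\ad(K(\R))\cap\Hg^{\ad}(\sX)(\R)^+$. By Lemma \ref{centis} the subgroup $\ad(K)=C((\ad\circ h)(i))$ is the fixed-point set of the Cartan involution induced by $\ad(h_W(i))=-\ad(h(i))$ on $\Hg^{\ad}(\sX)_{\R}$, and intersecting with the connected component $\Hg^{\ad}(\sX)(\R)^+$ produces a maximal compact subgroup (by the characterization of maximal compact subgroups as fixed-point sets of Cartan involutions used already in Lemma \ref{centis}). Since any two maximal compact subgroups of a connected reductive real Lie group are conjugate, we may identify $\ad(K(\R))\cap\Hg^{\ad}(\sX)(\R)^+$ with $K_{F_1}^+\times\cdots\times K_{F_r}^+$.

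Then, for each simple factor $F_j$, I would apply the theorem of Helgason (\cite{Helga}, {\bf XIII}. Theorem $6.1$) cited just above the statement: a connected simple adjoint noncompact Lie group whose maximal compact subgroup has nondiscrete center gives rise, via the quotient $F_j/K_{F_j}^+$, to a uniquely determined irreducible Hermitian symmetric domain $D_j$.

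Finally I would assemble everything: the quotient
$$D = \Hg^{\ad}(\sX)(\R)^+/\bigl(\ad(K(\R))\cap\Hg^{\ad}(\sX)(\R)^+\bigr) \cong D_1 \times \cdots \times D_r$$
is a finite direct product of irreducible Hermitian symmetric domains, hence a Hermitian symmetric domain. The only potential obstacle is justifying that the intersection $\ad(K(\R))\cap\Hg^{\ad}(\sX)(\R)^+$ is really maximal compact in $\Hg^{\ad}(\sX)(\R)^+$ rather than merely compact; this is handled cleanly by appealing to the Cartan-involution criterion already invoked in Lemma \ref{centis}, so no new computation is required.
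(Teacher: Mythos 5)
Your proposal is correct and follows essentially the same route as the paper: the paper likewise identifies $\ad(K(\R))\cap\Hg^{\ad}(\sX)(\R)^+$ as the maximal compact subgroup via Lemma \ref{centis}, invokes Corollary \ref{zk2} for the decomposition into noncompact simple adjoint factors with maximal compacts having nondiscrete centers, and applies Helgason's theorem factor by factor. (Minor notational point: since $-\id$ lies in the kernel of $\ad$, one has $\ad(h_W(i))=\ad(h(i))$ rather than $-\ad(h(i))$, but this does not affect the argument.)
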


Since $\Hg(\sX)_{\R} \subset \Sp(H^3(X,\R),Q)$, the associated Hermitian symmetric domain of $\Sp(H^3(X,\Q),Q)(\R)$ is $\fh_2$ and $\dim_{\C}\fh_2 = 3$, the Hermitian symmetric domain $D$ has dimension 1, 2 or 3. By using these conditions, we obtain some candidates for $\Hg^{\ad}(\sX)(\R)^+$. Since these candidates are the Lie groups of real valued points of $\R$-algebraic semisimple adjoint groups, we obtain not only connected Lie groups, but $\R$-algebraic groups in our cases by using Lemma  $\ref{redzar}$. Moreover we will exclude all of these candidates except of the candidates stated in Theorem $\ref{tzu}$.

\begin{lemma}
If $D$ has dimension one, we obtain
$$\Hg^{\ad}(\sX)_{\R}\cong \PU(1,1).$$
\end{lemma}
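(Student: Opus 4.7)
The plan is to combine the structural results already established with the classification of irreducible Hermitian symmetric domains, and then upgrade the resulting Lie group identification to one of $\R$-algebraic groups via Lemma~$\ref{redzar}$.

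First, I would exploit Corollary~$\ref{zk2}$ together with Proposition~$\ref{juhuu}$. By Corollary~$\ref{zk2}$, write $\Hg^{\ad}(\sX)(\R)^+=F_1\times\cdots\times F_r$ as a product of noncompact simple adjoint factors whose maximal compact subgroups have nondiscrete centers. Each factor $F_i$ contributes an irreducible Hermitian symmetric domain $D_i$ of positive complex dimension to $D$, so that $D\cong D_1\times\cdots\times D_r$. The hypothesis $\dim_{\C}D=1$ forces $r=1$, and hence $\Hg^{\ad}(\sX)(\R)^+$ is itself a simple adjoint noncompact Lie group whose associated Hermitian symmetric domain has dimension one.

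Second, by the classification of irreducible Hermitian symmetric domains, the unique such domain of complex dimension one is the unit disk (equivalently the upper half plane $\fh_1$), whose biholomorphism group has identity component $\PU(1,1)(\R)^+$. Since $\U(1,1)(\R)$ is connected, $\PU(1,1)(\R)$ is connected as well, so $\PU(1,1)(\R)^+=\PU(1,1)(\R)$. This provides a Lie group isomorphism $h\colon\Hg^{\ad}(\sX)(\R)^+\cong\PU(1,1)(\R)$. Finally, applying Lemma~$\ref{redzar}$ with $G=\Hg^{\ad}(\sX)_{\R}$ and $H=\PU(1,1)$, both of which are connected semisimple adjoint $\R$-algebraic groups by~$\ref{redi}$, upgrades $h$ to an isomorphism of $\R$-algebraic groups, yielding $\Hg^{\ad}(\sX)_{\R}\cong\PU(1,1)$.

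Most of the work has been absorbed into the preceding lemmas, propositions, and corollaries, so no substantial obstacle remains; the step requiring the most care is the bookkeeping around connectedness, namely verifying that $\PU(1,1)(\R)$ is connected so that the identity component picked out by the Hermitian symmetric domain coincides with the full group of real points required as input to Lemma~$\ref{redzar}$.
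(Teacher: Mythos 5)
Your route is the same as the paper's: Corollary~\ref{zk2} reduces to a product of noncompact simple adjoint factors, the classification of irreducible Hermitian symmetric domains forces a single factor with domain $\B_1$, and Lemma~\ref{redzar} upgrades the Lie group identification to one of $\R$-algebraic groups. The paper's proof is exactly this, stated in two sentences (with the appeal to Lemma~\ref{redzar} made once, in the paragraph preceding the four lemmas, for all the candidates at once).

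The one step you single out as requiring care is, however, the one place where your argument is actually wrong: the inference ``$\U(1,1)(\R)$ is connected, hence $\PU(1,1)(\R)$ is connected'' fails, because for an algebraic quotient $1\to Z\to G\to G/Z\to 1$ the map $G(\R)\to (G/Z)(\R)$ need not be surjective; its cokernel injects into $H^1(\Gal(\C/\R),Z(\C))$, which for $Z\cong S^1$ is $\Z/2$. Indeed $\PU(1,1)\cong\PGL_2$ as $\R$-algebraic groups (both are the noncompact adjoint form of type $A_1$), and $\PGL_2(\R)=\GL_2(\R)/\R^*$ has two connected components; the image of $\U(1,1)(\R)$ is only the identity component $\cong\PSL_2(\R)$. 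So what the Hermitian symmetric domain hands you is an isomorphism $\Hg^{\ad}(\sX)(\R)^+\cong\PU(1,1)(\R)^+$, not onto all of $\PU(1,1)(\R)$, and Lemma~\ref{redzar} as stated (which demands an isomorphism onto the full, connected group $H(\R)$) does not literally apply. The gap is repairable --- the proof of Lemma~\ref{redzar} only uses the complexified differential and the Zariski density of $G(\R)^+$ in $G$, so it goes through verbatim with an isomorphism $G(\R)^+\to H(\R)^+$ as input --- and the paper glosses over the same point, but as written your verification of the connectedness hypothesis is incorrect rather than merely omitted.
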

\begin{proof}
Assume that $D$ has dimension one. By consulting the list of irreducible Hermitian symmetric domains (\cite{Helga}, {\bf X}, Table {\bf V}), one concludes $D = \B_1$. Thus from the fact that there are no direct compact factors (see Corollary $\ref{zk2}$) one concludes
$$\Hg^{\ad}(\sX)_{\R}\cong \PU(1,1).$$
\end{proof}

\begin{lemma}
If $D$ has dimension two, we obtain
$$\Hg^{\ad}(\sX)_{\R}\cong\PU(1,2), \ \ \mbox{or} \ \ \Hg^{\ad}(\sX)_{\R}\cong\PU(1,1)\times \PU(1,1).$$
\end{lemma}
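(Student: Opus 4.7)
By Proposition \ref{juhuu} the quotient $D$ carries the structure of a Hermitian symmetric domain, and by Corollary \ref{zk2} the connected semisimple adjoint Lie group $\Hg^{\ad}(\sX)(\R)^+$ decomposes as a direct product of noncompact simple adjoint subgroups whose maximal compact subgroups have nondiscrete centers. This product decomposition induces a decomposition of $D$ as a product of irreducible Hermitian symmetric domains of the noncompact type, each of (complex) dimension $\geq 1$. Since $\dim_\C D = 2$, there are exactly two possibilities: either $D$ is itself irreducible of dimension $2$, or $D$ is a product of two irreducible Hermitian symmetric domains of dimension $1$.

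The plan is then to appeal to the classification of irreducible Hermitian symmetric domains of the noncompact type (Helgason, \textbf{X}, Table \textbf{V}). Among these, the only one of complex dimension $1$ is the unit ball $\B_1$, whose adjoint isometry group is $\PU(1,1)$. In complex dimension $2$ one finds only the complex ball $\B_2$ (Type $AIII$ with $(p,q)=(1,2)$), whose adjoint isometry group is $\PU(1,2)$; the other candidate one might initially list, namely $\SO(2,2)/\SO(2)\times\SO(2)$, is not irreducible because $\SO(2,2)$ is isogenous to $\SL_2(\R)\times\SL_2(\R)$, so it actually reproduces the reducible case $\B_1\times\B_1$. Reading off the groups in the two scenarios above therefore yields
$$\Hg^{\ad}(\sX)(\R)^+\cong\PU(1,2)(\R)^+ \ \ \mbox{or} \ \ \Hg^{\ad}(\sX)(\R)^+\cong\PU(1,1)(\R)^+\times\PU(1,1)(\R)^+,$$
as isomorphisms of connected Lie groups.

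To upgrade these Lie group isomorphisms to isomorphisms of $\R$-algebraic groups, I invoke Lemma \ref{redzar}: $\Hg^{\ad}(\sX)_{\R}$ is a connected semisimple adjoint $\R$-algebraic group (see $\ref{redi}$), and the same is true of $\PU(1,2)$ and $\PU(1,1)\times\PU(1,1)$, whose real points are connected Lie groups. Hence the identification at the Lie group level transfers to an identification as $\R$-algebraic groups, giving the stated conclusion.

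The main obstacle in this argument is really bookkeeping: one must be sure that no other irreducible Hermitian symmetric domain of complex dimension $2$ has been overlooked (e.g.\ disguised instances of the $BDI$ series or small-dimensional exceptional coincidences of Lie algebras), and one must rule out that one of the direct factors is compact, which is exactly what Corollary \ref{zk2} provides. Beyond that, the step from Lie group isomorphism to $\R$-algebraic isomorphism is handled uniformly by Lemma \ref{redzar}.
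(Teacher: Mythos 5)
Your proposal is correct and follows essentially the same route as the paper: both arguments reduce to consulting the classification of irreducible Hermitian symmetric domains (Helgason, \textbf{X}, Table \textbf{V}), observing that in complex dimension two the only possibilities are $\B_2$ and $\B_1\times\B_1$, and then reading off the adjoint groups, with Lemma \ref{redzar} (which the paper invokes in the paragraph preceding the lemma rather than inside its proof) supplying the upgrade from Lie groups to $\R$-algebraic groups. Your additional remarks on excluding $\SO(2,2)/\SO(2)\times\SO(2)$ and on compact factors are correct but only make explicit what the paper leaves implicit.
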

\begin{proof}
By consulting the list of irreducible Hermitian symmetric domains (\cite{Helga}, {\bf X}, Table {\bf V}), the only possible Hermitian symmetric domains of dimension two are up to isomorphisms given by $\B_1 \times \B_1$ and $\B_2$. Thus we obtain the stated result.
\end{proof}

\begin{lemma}
One obtains $\Hg(\sX) = \Sp(H^3(X,\Q),Q)$, if $D$ has the dimension 3.
\end{lemma}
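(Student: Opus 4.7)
The strategy is to apply the classification of Hermitian symmetric domains to list all possibilities for a $3$-dimensional $D$, then rule out all but $\fh_2$ using dimension and rank bounds coming from the containment $\Hg(\sX)_{\R}\subseteq\Sp(H^3(X,\R),Q)$. Consulting the classification in Helgason (\cite{Helga}, {\bf X}, Table {\bf V}), the irreducible Hermitian symmetric domains of complex dimension at most $3$ are $\B_1$, $\B_2$, $\B_3$, and $\fh_2$. Hence up to isomorphism, the (possibly reducible) Hermitian symmetric domains of dimension exactly $3$ are
$$\B_1\times\B_1\times\B_1,\quad \B_1\times\B_2,\quad \B_3,\quad \fh_2,$$
with associated adjoint automorphism groups $\PU(1,1)^3$, $\PU(1,1)\times\PU(1,2)$, $\PU(1,3)$, and $\Sp^{\ad}_{\R}(4)$ of real dimensions $9$, $11$, $15$, and $10$ respectively.

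Next, I would exploit the containment $\Hg(\sX)_{\R}\subseteq\Sp(H^3(X,\R),Q)$, which forces $\Hg^{\der}(\sX)_{\R}\subseteq\Sp(H^3(X,\R),Q)$ and yields two numerical constraints, namely
$$\dim\Hg^{\ad}(\sX)_{\R}=\dim\Hg^{\der}(\sX)_{\R}\leq\dim\Sp(H^3(X,\R),Q)=10$$
and $\rk\Hg^{\der}(\sX)_{\R}\leq\rk\Sp_4=2$, the second following from the fact that a Cartan subalgebra of a semisimple subalgebra extends to a Cartan subalgebra of the ambient semisimple Lie algebra. The dimension bound eliminates $\PU(1,3)$ and $\PU(1,1)\times\PU(1,2)$; the rank bound eliminates $\PU(1,1)^3$, whose rank is $3$. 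Only $\Sp^{\ad}_{\R}(4)$ remains, and Lemma \ref{redzar} then upgrades the resulting Lie group isomorphism to an isomorphism of $\R$-algebraic groups $\Hg^{\ad}(\sX)_{\R}\cong\Sp^{\ad}_{\R}(4)$.

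Finally, since $\Hg(\sX)_{\R}\supseteq\Hg^{\der}(\sX)_{\R}$ and the latter is a Zariski-connected $\R$-algebraic subgroup of $\Sp(H^3(X,\R),Q)$ of the same dimension $10$, we conclude $\Hg(\sX)_{\R}=\Sp(H^3(X,\R),Q)$. A Zariski-closed $\Q$-subgroup of $\Sp(H^3(X,\Q),Q)$ whose $\R$-base change is the full group must itself be the full group, so $\Hg(\sX)=\Sp(H^3(X,\Q),Q)$. The only real subtlety is the rank argument, which reduces to the standard fact that any toral subalgebra of a semisimple Lie algebra is contained in a Cartan subalgebra; once that is in hand, the remaining dimension and rank bookkeeping is routine.
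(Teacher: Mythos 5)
Your proof is correct. It has the same skeleton as the paper's argument -- consult Helgason's table to see that the only Hermitian symmetric domains of dimension $3$ are $\B_1\times\B_1\times\B_1$, $\B_1\times\B_2$, $\B_3$ and $\fh_2$ (absorbing $\SO^*(6)/\U(3)$ and the domain of $\SO(2,3)$ into $\B_3$ and $\fh_2$ via the exceptional isomorphisms), then eliminate all but $\fh_2$ by numerical invariants -- but the invariants you use for the elimination differ. The paper compares the stabilizer of a point of $D$, which by Lemma $\ref{centis}$ is $\ad(K\cap\Hg^{\der}(\sX)_{\R})$ with $K\subseteq C(h(i))\cong\U(2)$ and hence has dimension at most $4$ and rank at most $2$, against the stabilizers $(S^1)^3$, $S^1\times\U(2)$ and $\U(3)$ of the candidate domains. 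You instead bound the whole adjoint group: the dimension bound $\dim\Hg^{\der}(\sX)_{\R}\leq\dim\Sp(H^3(X,\R),Q)=10$ kills $\PU(1,3)$ and $\PU(1,1)\times\PU(1,2)$, and the rank bound kills $\PU(1,1)^3$; the rank argument is sound, since the elements of a Cartan subalgebra of a semisimple subalgebra of $\fsp(H^3(X,\C),Q)$ act semisimply on the $4$-dimensional standard representation and therefore span a toral subalgebra contained in a $2$-dimensional Cartan subalgebra. Your route has the mild advantage of needing only the containment $\Hg^{\der}(\sX)_{\R}\subseteq\Sp(H^3(X,\R),Q)$ rather than Proposition $\ref{uhuh}$ and Lemma $\ref{centis}$, while the paper's stabilizer comparison is the finer tool (it is what does the work in the lower-dimensional cases). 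Your closing step -- a Zariski-connected $10$-dimensional subgroup of the connected $10$-dimensional group $\Sp(H^3(X,\R),Q)$ equals the whole group, and a $\Q$-subgroup of $\Sp(H^3(X,\Q),Q)$ whose base change to $\R$ is everything is everything -- correctly supplies the passage from $D\cong\fh_2$ to $\Hg(\sX)=\Sp(H^3(X,\Q),Q)$ that the paper leaves implicit.
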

\begin{proof}
We show that $\fh_2$ contains no bounded symmetric domain of dimension 3 except of itself. In order to do this we check the list of Hermitian Symmetric Domains (compare \cite{Helga}, {\bf X}, Table {\bf V}). The domain $D$ cannot be the direct product of 3 copies of $\B_1$, since in this case the centralizer of $(\ad\circ h_X)(i)$ would be a torus of dimension 3. But the centralizer of $h_X(i)$ is isomorphic to $\U(2)$, which contains a maximal torus of dimension 2. Since each point $p \in\B_1\times \B_2$ has a centralizer $S^1\times \U(2)$ of dimension 5 and $C(h(i)) \cong\U(2)$ has dimension 4, one concludes that $D$ cannot be isomorphic to $\B_1\times \B_2$. In the case of $\B_3$ the stabilizer is $\U(3)$ and hence it is to large. The same holds true in the case of $\SO^*(6)/\U(3)$. Moreover the associated bounded symmetric domain of $\SO(2,3)^+(\R)$ is isomorphic to $\fh_2$. Thus we obtain the stated result.
\end{proof}

By the previous lemmas, the following adjoint semisimple groups are possible candidates for $\Hg^{\ad}(\sX)_{\R}$:
$$\PU(1,1), \ \ \PU(1,1)\times \PU(1,1), \ \ \PU(1,2), \ \ \Sp^{\ad}_{\R}(4)$$
Now we exclude $\PU(1,2)$ and $\PU(1,1)\times \PU(1,1)$.

\begin{proposition}
The group $\Hg^{\ad}(\sX)_{\R}$ cannot be isomorphic to $\PU(1,2)$.
\end{proposition}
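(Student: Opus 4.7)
My plan is to rule out $\PU(1,2)$ by a representation-theoretic argument that exploits the fact that $H^{3}(X,\R)$ has real dimension only $4$. Suppose for contradiction that $\Hg^{\ad}(\sX)_{\R} \cong \PU(1,2)$; then $\Hg^{\der}(\sX)_{\R}$ is isogenous to the universal cover $\SU(1,2)$ of $\PU(1,2)$, so it is a quotient of $\SU(1,2)$ by a subgroup of the finite centre $Z_{3}$. Since $\Hg(\sX)$ is by construction a subgroup of $\GL(H^{3}(X,\Q))$, its derived subgroup acts faithfully on $H^{3}(X,\R)$; pulling back along the isogeny yields a real representation $\rho\colon \SU(1,2)\to \GL(H^{3}(X,\R))$ whose kernel is contained in $Z_{3}$, and in particular the induced Lie algebra representation $\fsu(1,2)\hookrightarrow \mathfrak{gl}(H^{3}(X,\R))$ is faithful.

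Next I would complexify to obtain a four-dimensional complex representation $\rho_{\C}$ of $\SL(3,\C)$ and invoke the classification of its irreducible complex representations, whose dimensions are $1,3,3,6,6,8,\dots$. Since $\rho_{\C}$ must be faithful on the centre $Z_{3}$, its decomposition must contain a three-dimensional irreducible summand, leaving exactly the two possibilities $V_{(0,0)}\oplus V_{(1,0)}$ and $V_{(0,0)}\oplus V_{(0,1)}$, where $V_{(1,0)}$ denotes the standard representation of $\SL(3,\C)$ and $V_{(0,1)}$ its dual.

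The decisive step is then to observe that because $\rho_{\C}$ is the complexification of a real representation, it has to be self-conjugate as a complex representation of the real Lie group $\SU(1,2)$, i.e.\ $\rho_{\C}\cong \overline{\rho_{\C}}$. The Hermitian form of signature $(1,2)$ preserved by $\SU(1,2)$ furnishes an $\SU(1,2)$-equivariant antilinear isomorphism between the standard representation and its dual, so complex conjugation exchanges $V_{(1,0)}$ and $V_{(0,1)}$; consequently neither of the two candidate decompositions is self-conjugate. This contradicts the existence of the real structure on $\rho_{\C}$ and so rules out $\Hg^{\ad}(\sX)_{\R}\cong \PU(1,2)$. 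The main point that requires care is precisely the identification $\overline{V_{(1,0)}}\cong V_{(0,1)}$ in the category of complex representations of the real group $\SU(1,2)$; this is a standard consequence of the indefinite Hermitian pairing but should be verified explicitly in the matrix conventions fixed earlier in the paper.
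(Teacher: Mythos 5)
Your proof is correct, but it takes a genuinely different route from the paper's. The paper works inside $\Sp(H^3(X,\R),Q)$ with the complex structures constructed earlier: assuming $\Hg^{\ad}(\sX)_{\R}\cong\PU(1,2)$, it identifies the centralizer of $(\ad\circ h)(i)$ with $\U(2)$, deduces $C(h(i))\subset\Hg^{\der}(\sX)_{\R}$ by a dimension count, and then reaches a contradiction by noting that $C^{\der}(h(i))\cong\SU(2)$ contains infinitely many complex structures, which the adjoint map (kernel $\{\pm\id\}$) would send to infinitely many elements of order two in a group isomorphic to $\SU(2)(\R)$ --- whereas $-\id$ is the only element of order two there. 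Your argument bypasses all of that and uses a single input: $\Hg^{\der}(\sX)_{\R}$ acts faithfully on a real vector space of dimension $4$. The chain of steps (central isogeny from the simply connected cover $\SU(1,2)$; a four-dimensional complex representation of $\fsl(3,\C)$ with nontrivial Lie algebra action must be $V_{(0,0)}\oplus V_{(1,0)}$ or $V_{(0,0)}\oplus V_{(0,1)}$ since the irreducible dimensions are $1,3,3,6,\dots$; neither is self-conjugate because the indefinite Hermitian form identifies the conjugate of the standard representation with its dual, and the standard representation of $\SL_3$ is not self-dual) is sound; Zariski density of $\SU(1,2)(\R)$ in $\SL(3,\C)$ guarantees $\overline{V_{(1,0)}}\cong V_{(0,1)}\not\cong V_{(1,0)}$ as representations of the real group, exactly as you flag. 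Your approach is more economical --- it needs neither the symplectic form nor the Hodge-theoretic complex structures --- and it isolates the true obstruction: the relevant real form is $\SU(1,2)$ rather than $\SL(3,\R)$, and its three-dimensional representations are of complex type, so no four-dimensional faithful representation can be defined over $\R$. One small wording issue: you justify the presence of a three-dimensional summand by saying $\rho_{\C}$ ``must be faithful on the centre $Z_3$,'' but what you actually established is only $\ker\rho\subseteq Z_3$; the correct justification, which you already have in hand, is that the Lie algebra representation is faithful, hence $\rho_{\C}$ is not a sum of four trivial characters and must contain a three-dimensional irreducible constituent (faithfulness on the centre is then a consequence, not a hypothesis). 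This does not affect the validity of the proof.
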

\begin{proof}
Assume that $\Hg^{\ad}(\sX)_{\R}$ would be isomorphic to $\PU(1,2)$. In this case the centralizer $C((\ad \circ h)(i))\subset \Hg^{\ad}(\sX)_{\R}$ of the complex structure $(\ad \circ h)(i)$ is isomorphic to $\U(2)$. Hence $C((\ad \circ h)(i))$ has dimension 4. One has that $C((\ad \circ h)(i))$ is isogenous to $C(h(i))\cap \Hg^{\der}(\sX)_{\R}$. Since $C(h(i))$ has already dimension 4 and $h(S^1) \subset C(h(i))$, one concludes
$$C(h(i))\subset \Hg^{\der}(\sX)_{\R} \ \ \mbox{and} \ \ \Hg^{\der}(\sX)_{\R}= \Hg(\sX)_{\R}.$$
Note that
$$C^{\der}(h(i)) \cong \SU(2).$$
Moreover $\ad$ yields a homomorphism
$$g:=\ad|_{C^{\der}(h(i))}: C^{\der}(h(i)) \to C(\ad \circ h(i)),$$
whose kernel consists of $\{\pm \id\}$. Since
$$C^{\der}(h(i))/\{\pm \id\} \cong \PU(2)$$
is semisimple, one has
$$(g(C^{\der}(h(i))))^{\der} = g(C^{\der}(h(i))).$$
Hence
$$g(C^{\der}(h(i)))\subset C^{\der}(\ad \circ h(i))\cong \SU(2).$$
Recall that
$$\SU(2)(\R) = \{M(\alpha,\beta)=\left(\begin{array}{cc}
\alpha & \beta \\
-\bar \beta  &   \bar \alpha \end{array}\right): |\alpha|^2+|\beta|^2 = 1\}.$$
Each matrix $M(\alpha,\beta) \in \SU(2)(\R)$ with $\alpha \in i\R$ has the characteristic polynomial
$$x^2+1=(x-i)(x+i),$$
which implies that $M(\alpha,\beta)$ is a complex structure. Therefore $C^{\der}(h(i))(\R)\cong\SU(2)(\R)$ contains infinitely many complex structures. Since $\ker(g)=\{\pm \id\}$, all these complex structures are mapped to infinitely many elements of order 2 in $C^{\der}(\ad \circ h(i))$. Since each $2\times2$ matrix $M$ of order 2 has a minimal polynomial dividing the polynomial $x^2-1$, the matrix $M$ is either given by $\diag(-1,-1)$ or one has an eigenspace  with respect to 1 and one eigenspace with respect to $-1$. In the second case $\det(M) = -1$. Thus $\diag(-1,-1)$ is the only element of order 2 in $\SU(2)(\R)$. On the other hand there are infinitely many complex structures in $C^{\der}(h(i))(\R)$, which are mapped by $g$ to infinitely many elements of order 2 in $C((\ad\circ h)(i))(\R) \cong \SU(2)(\R)$. Thus we have a contradiction.
\end{proof}

Let $H$ denote the centralizer of $h_G(i)h_W(i)$ in $\Sp(H^3(X,\R),Q)$. Note that
$$h_G(i)h_W(i) = \diag(-1,-1,1,1)$$
with respect to the basis $\{v_{3,0}, v_{0,3}, v_{2,1}, v_{1,2}\}$.
Thus $H(\R)$ is given by the matrices
\begin{equation} \label{haha} M_1 = \left(\begin{array}{cccc}
a & b & 0 & 0 \\
\bar b  &  \bar a & 0 & 0 \\
0 & 0 & c & d \\
0 & 0 & \bar d & \bar c \end{array}\right) \ \ \mbox{with} \ \ \left(\begin{array}{cc}
a & b \\
\bar b  &  \bar a\end{array}\right),
\left(\begin{array}{cc}
c & d \\
\bar d  &  \bar c\end{array}\right) \in \SU(1,1)(\R)
\end{equation}
with respect to the basis $\{v_{3,0}, v_{0,3}, v_{2,1}, v_{1,2}\}$.
One can easily verify this fact by explicit computations using the description of the symplectic form $Q$ in $\eqref{ququ}$. The group $H$ will play an important role due to the following lemma:

\begin{lemma} \label{keinhaar}
The group $\Hg(\sX)_{\R}$ cannot be a subgroup of $H$.
\end{lemma}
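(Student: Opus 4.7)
The plan is to argue by contradiction, in close parallel with the conclusion of Lemma \ref{ccp}, by combining a structural observation about $H$ with the non-triviality of the infinitesimal VHS provided by Bryant-Griffiths. From the explicit description \eqref{haha}, the group $H(\R)$ consists of block-diagonal matrices preserving the decomposition $V_{\C} = V_{-}\oplus V_{+}$, where
$$V_{-} := {\rm Span}(v_{3,0},v_{0,3}) = H^{3,0}(X)\oplus H^{0,3}(X),\quad V_{+} := {\rm Span}(v_{2,1},v_{1,2}) = H^{2,1}(X)\oplus H^{1,2}(X)$$
are the $\mp 1$-eigenspaces of $h_G(i)h_W(i)$. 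In particular every element of $H(\R)$ leaves the plane $V_{-}$ stable, and $H^{3,0}(X) = {\rm Span}(v_{3,0})$ sits inside $V_{-}$, while $H^{2,1}(X) = {\rm Span}(v_{2,1})$ lies in $V_{+}$.

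Now assume for contradiction that $\Hg(\sX)_{\R}\subseteq H$. My first step would be to deduce that $H^{3,0}(\sX_b)\subset V_{-}$ for every $b$ in a small neighborhood of $0\in B$. For this I would invoke the standard fact that the period map of the polarizable $\Q$-VHS of $\sX \to B$ takes values in the Hodge (equivalently Mumford-Tate) domain $D_{\Hg}$ of the generic Hodge group, namely the $\Hg(\sX)(\R)^{+}$-orbit of the reference Hodge structure $h_X$. After shrinking $B$, one may therefore write $h_{\sX_b} = g_b\cdot h_X\cdot g_b^{-1}$ with $g_b\in \Hg(\sX)(\R)^{+}\subseteq H(\R)$ and $g_0 = e$, so that $H^{3,0}(\sX_b) = g_b(H^{3,0}(X))\subset g_b(V_{-}) = V_{-}$ as claimed.

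The contradiction would then be extracted from Bryant-Griffiths \cite{BG}, exactly as in Lemma \ref{ccp}. Choosing a local section $\omega$ of the line bundle $F^{3}\sX\to B$ with $\omega(0) = v_{3,0}$, the inclusion $H^{3,0}(\sX_b)\subset V_{-}$ forces $\omega(b)\in V_{-}$ for $b$ near $0$, hence also $\nabla_{\partial/\partial b}\omega(0)\in V_{-}$. But by \cite{BG} the vectors $\omega(0)$ and $\nabla_{\partial/\partial b}\omega(0)$ together span $F^{2}(X) = {\rm Span}(v_{3,0},v_{2,1})$, which would require $v_{2,1}\in V_{-} = {\rm Span}(v_{3,0},v_{0,3})$, an absurdity. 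The only delicate point in the whole argument is the standard claim used in the second paragraph that the period map has image in the orbit $D_{\Hg}$ of the generic Hodge group; once this is granted, everything else is the same direct bookkeeping computation as at the end of Lemma \ref{ccp}.
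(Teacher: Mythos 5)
Your skeleton is the same as the paper's: trap $H^{3,0}(\sX_b)$ in the fixed plane $V_{-}={\rm Span}(v_{3,0},v_{0,3})$ for all $b$ near $0$, then contradict the Bryant--Griffiths statement that $\omega(0)$ and $\nabla_{\partial/\partial b}\,\omega(0)$ span $F^2(X)$ --- your endgame is literally the paper's (and that of Lemma \ref{ccp}). What differs is how you obtain the containment $H^{3,0}(\sX_b)\subset V_{-}$, and this is also where the one real weakness sits. You derive it from the assertion that the period map lands in the single orbit $\Hg(\sX)(\R)^{+}\cdot h_X$ of the \emph{reference} Hodge structure. As you use it, this is not an off-the-shelf citation: the standard Mumford--Tate-domain statements concern the orbit of a Hodge-\emph{generic} point, and the fibre $\sX_0$ need not be Hodge-generic; a priori you only know that each $h_{\sX_b}|_{S^1}$ is a homomorphism $S^1\to\Hg(\sX)_{\R}\subset H_{\R}$, not that it is $\Hg(\sX)(\R)^{+}$-conjugate to $h_X|_{S^1}$. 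The gap is fillable: either invoke rigidity of homomorphisms from the compact group $S^1(\R)$ into $\Hg(\sX)(\R)$ to get $h_{\sX_b}=g_b h_X g_b^{-1}$ with $g_b\in\Hg(\sX)(\R)^{+}$ locally, or, closer to the paper, note that any homomorphism $S^1(\R)\to H(\R)\cong \SU(1,1)(\R)\times\SU(1,1)(\R)$ has image in a maximal compact subgroup, all of which are $H(\R)$-conjugate to the standard $2$-torus, so for generic $z$ each eigenline of $h_{\sX_b}(z)$ lies in $V_{-}$ or $V_{+}$; in particular $H^{3,0}(\sX_b)$ does, and continuity together with the value at $b=0$ forces $V_{-}$. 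The paper reaches the same containment by showing that the Cartan involution given by $h_W(i)_b$ restricts to $H$, identifying the resulting maximal compact torus with $C(h_b(S^1))$ via Remark \ref{1.2}, deducing $h_G(i)_b\in H$, and then doing eigenspace bookkeeping with the two commuting complex structures $h_G(i)_b,h_W(i)_b\in H(\R)$ from $\eqref{haha}$. So: same final contradiction, a genuinely different and potentially shorter middle step, but the orbit/conjugacy claim must be proved, not merely declared standard.
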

\begin{proof}
Assume that $\Hg(\sX)_{\R}$ would be a subgroup of $H$. Since for each $b \in B$ the conjugation by $h_W(i)_b$ yields a Cartan involution of $\Sp(H^3(X,\R),Q)$, which can be restricted to an involution of $H$ in this case,  the conjugation by $h_W(i)_b$ yields a Cartan involution of $H$ (compare \cite{Sat}, {\bf I}. Theorem $4.2$). Due to the fact $H\cong \SU(1,1)\times\SU(1,1)$, the corresponding maximal compact subgroup is a torus of dimension 2 containing $h_b(S^1)$. By Remark $\ref{1.2}$, the centralizer $C(h_b(S^1))$ is already a torus of dimension 2. Hence
$$h_G(i)_b \in C(h_b(S^1)) \subset H.$$
Thus from the description of $H$ in $\eqref{haha}$ and the fact that $h_G(i)_b,h_W(i)_b\in H$ are real complex structures, one concludes that
$${\rm Eig}(h_G(i)_b,i) = {\rm Span}(v_1,v_3), \ \ {\rm Eig}(h_W(i)_b,i) = {\rm Span}(v_2,v_4)$$
with
\begin{equation} \label{vis}
v_1,v_2 \in {\rm Span}(v_{3,0}, v_{0,3}), \ \ v_3,v_4 \in {\rm Span}(v_{2,1}, v_{1,2}).
\end{equation}
For each $b\in B$ one has the onedimensional vector space
$$H^{3,0}(\sX_b) = {\rm Eig}(h_G(i)_b,i) \cap {\rm Eig}(h_W(i)_b,i).$$
Hence $\{v_1, \ldots, v_4\}$ is not linearly independent and one concludes from the description of $H$ in $\eqref{vis}$ that $H^{3,0}(\sX_b)$ is either contained in ${\rm Span}(v_{3,0}, v_{0,3})$ or contained in ${\rm Span}(v_{2,1}, v_{1,2})$.\footnote{This is only an exercise in linear algebra.} Since the period map is continuous, one has for each $b \in B$
$$H^{3,0}(\sX_b)\subset {\rm Span}(v_{3,0}, v_{0,3}).$$
This contradicts the fact that $\omega(0)$ and $\nabla_{\frac{\partial}{\partial b}}\omega(0)$ generate $F^2(X)$, where $\omega$ denotes a generic section of the $F^3$-bundle in the $VHS$ (see \cite{BG}). Thus $\Hg(\sX)_{\R}$ cannot be a subgroup of $H$.
\end{proof}

\begin{proposition} \label{2b}
One cannot have
$$\Hg^{\ad}(\sX)_{\R}\cong\PU(1,1)\times \PU(1,1).$$
\end{proposition}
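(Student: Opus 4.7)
The plan is to assume $\Hg^{\ad}(\sX)_{\R}\cong\PU(1,1)\times\PU(1,1)$ and derive a contradiction with Lemma \ref{keinhaar} by showing that this assumption forces $\Hg(\sX)_{\R}$ to coincide with the specific group $H$.

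First I would analyze the faithful $4$-dimensional symplectic representation of $\Hg^{\der}(\sX)_{\R}$ on $V=H^{3}(X,\R)$. Since $\Hg^{\der}(\sX)_{\R}$ is isogenous to $\Hg^{\ad}(\sX)_{\R}$, its complexified Lie algebra is $\fsl_{2}(\C)\oplus\fsl_{2}(\C)$. Up to isomorphism the only faithful $4$-dimensional complex representations of this Lie algebra are the tensor product $W_{1}\boxtimes W_{2}$ (the $4$-dimensional irreducible representation) and the external direct sum $W_{1}\oplus W_{2}$ of the standard $2$-dimensional representations of the two factors. The tensor product cannot preserve a symplectic form because the tensor of two antisymmetric forms is symmetric, so it is ruled out by the presence of $Q$. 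Hence the representation is the direct sum, and working over $\R$ one obtains a decomposition $V=V_{1}\oplus V_{2}$ into real $2$-dimensional subspaces with each $\fsl_{2}$-factor acting on one summand. A standard $\fsl_{2}$-invariance computation then forces $Q(V_{1},V_{2})=0$, so $Q|_{V_{i}}$ is symplectic on each summand.

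Consequently $\Hg^{\der}(\sX)_{\R}\subseteq\Sp(V_{1})\times\Sp(V_{2})$, and since both are Zariski-connected $\R$-algebraic groups with the same Lie algebra, the inclusion is an equality. By Schur's lemma applied to each irreducible factor, the centralizer of $\Sp(V_{1})\times\Sp(V_{2})$ in $\Sp(V)$ reduces to the finite group $\{\pm I_{V_{1}}\}\times\{\pm I_{V_{2}}\}$. Since $Z(\Hg(\sX))^{0}$ is contained in this centralizer, it is trivial, and therefore $\Hg(\sX)_{\R}=\Hg^{\der}(\sX)_{\R}=\Sp(V_{1})\times\Sp(V_{2})$.

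Next I would use that $h(S^{1})\subseteq\Hg(\sX)_{\R}=\Sp(V_{1})\times\Sp(V_{2})$ must preserve the decomposition $V=V_{1}\oplus V_{2}$. Now $h(S^{1})$ acts on $V_{\C}$ by the four distinct characters $z^{\pm 3},z^{\pm 1}$, paired by complex conjugation as $\{z^{3},z^{-3}\}$ and $\{z,z^{-1}\}$. Hence the only real $2$-dimensional $h(S^{1})$-stable subspaces of $V$ are $V^{(3)}=(H^{3,0}(X)\oplus H^{0,3}(X))\cap V$ and $V^{(1)}=(H^{2,1}(X)\oplus H^{1,2}(X))\cap V$, forcing $\{V_{1},V_{2}\}=\{V^{(3)},V^{(1)}\}$. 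With this specific decomposition, $\Sp(V_{1})\times\Sp(V_{2})$ is exactly the group $H$ from \eqref{haha}, namely the centralizer of $h_{G}(i)h_{W}(i)$, which acts as $\mp 1$ on $V^{(3)},V^{(1)}$. Therefore $\Hg(\sX)_{\R}=H$, contradicting Lemma \ref{keinhaar}.

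The main obstacle is the representation-theoretic reduction in the second paragraph: ruling out the tensor-product case and showing that the resulting $\fsl_{2}\oplus\fsl_{2}$-invariant symplectic decomposition of $V$ is forced by $h(S^{1})$ to be precisely the decomposition defining the $H$ of Lemma \ref{keinhaar}. Once this alignment is established, invoking Lemma \ref{keinhaar} completes the proof.
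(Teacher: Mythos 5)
Your proof is correct, but it takes a genuinely different route from the paper's. The paper argues through the maximal compact torus: it shows $T=C(h(S^1))$, deduces that the center is discrete, locates $\ad(h_G(i))$ among the three order-two elements of $\ad(T)$, and then splits into two cases -- the first killed by an explicit computation showing that $\ker(pr_1\circ\ad)^0=C^{\der}(h_G(i))$ commutes with nothing in $H$ beyond a torus, the second killed by Lemma \ref{keinhaar}. You instead classify the faithful four-dimensional representations of $\fsl_2(\C)\oplus\fsl_2(\C)$, rule out $W_1\boxtimes W_2$ because its unique invariant form is symmetric, and conclude that $\Hg^{\der}(\sX)_{\R}=\Sp(V_1)\times\Sp(V_2)$ for a $Q$-orthogonal splitting $V=V_1\oplus V_2$; the character decomposition $z^{\pm3},z^{\pm1}$ of $h(S^1)$ then pins $\{V_1,V_2\}$ down to $\{V^{(3)},V^{(1)}\}$, identifying the group with the $H$ of $\eqref{haha}$ and reducing everything to Lemma \ref{keinhaar}. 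The details check out: the isotypic components are conjugation-stable hence defined over $\R$, Schur's lemma gives the finite centralizer $\{\pm I_{V_1}\}\times\{\pm I_{V_2}\}$ so that $Z(\Hg(\sX))^0$ is trivial, and $H$ is indeed $\Sp(V^{(3)},Q)\times\Sp(V^{(1)},Q)$ by the $Q$-orthogonality of the two eigenspaces of $h_G(i)h_W(i)$. Your argument avoids the case distinction and most of the matrix computation, and in fact proves the slightly stronger statement that any connected subgroup of $\Sp(H^3(X,\R),Q)$ with adjoint group $\PU(1,1)\times\PU(1,1)$ containing $h(S^1)$ must equal $H$; what the paper's approach buys is that it stays entirely within the toolkit of centralizers of complex structures already set up in Section 1, without invoking the representation theory of $\fsl_2\oplus\fsl_2$.
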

\begin{proof}
Assume that $\Hg^{\ad}(\sX)_{\R}\cong\PU(1,1)\times \PU(1,1)$. Without loss of generality the only possible Cartan involution of $\PU(1,1)\times \PU(1,1)$ is given by the conjugation by
$$([\diag(i,-i)],[\diag(i,-i)]) \in \PU(1,1)\times \PU(1,1).$$
Moreover in $\Hg^{\ad}(\sX)_{\R}\cong\PU(1,1)\times \PU(1,1)$ the maximal compact subgroup of elements fixed by the Cartan involution is given by a torus of dimension 2. Thus there is a torus $T\subset \Hg^{\der}(\sX)_{\R}$ of dimension two, whose elements are fixed by the Cartan involution. Assume without loss of generality that the Cartan involution of $\Hg^{\der}(\sX)_{\R}$ is obtained from conjugation by $h(i)$.
Thus $T$ is a maximal torus of $C(h(i))\cong \U(2)$, since $T$ has dimension 2. Therefore the center of $\Hg(\sX)_{\R}$ is discrete and one concludes from $\ref{redi}$ that
$$\Hg^{\der}(\sX)_{\R}= \Hg(\sX)_{\R}.$$
From the fact that each element of $h(S^1)$ commutes with $h(i)$, one concludes $h(S^1)\subset T$. Since $T$ is a torus of dimension 2 containing $h(S^1)$, one concludes from Remark $\ref{1.2}$ that $T = C(h(S^1))$. Thus $h_G(i) \in T$ and $h_G(S^1) \subset T$. Note that $h_G(i)$ cannot be contained in the center of $\Hg(\sX)_{\R}$, since $h_G(i)\in Z(\Hg(\sX)_{\R})$ would imply that $h_G(S^1)\subset Z(\Hg(\sX)_{\R})$ as one can easily conclude from the fact that
$$h_G(S^1)(\R) = \{a\cdot\id+b\cdot h_G(i) \ \ | \ \ a^2+b^2 = 1\}.$$
This contradicts our conclusion that $Z(\Hg(\sX)_{\R})$ is discrete. Since $h_G(i)$ has order 4 and $$h_G(i)^2 = -\id\in \ker(\ad),$$ one concludes that $\ad(h_G(i))$ yields an element of order two in $\ad(T)$. Note that $\ad(T)$ has only the three elements 
$$([\diag(i,-i)],[\diag(1,1)]), \ \ ([\diag(i,-i)],[\diag(i,-i)]) \ \ \mbox{and} \ \ ([\diag(1,1)],[\diag(i,-i)])$$
of order 2. Thus we have two cases: 
In the first case $\ad(h_G(i))$ is without loss of generality given by
$$([\diag(i,-i)],[\diag(1,1)]).$$
Let $pr_i$ ($i=1,2$) denote the projection of $\Hg^{\ad}(\sX)_{\R}\cong\PU(1,1)\times \PU(1,1)$ to the respective copy of $\PU(1,1)$. One has that $\Hg(\sX)_{\R}$ contains $\ker(pr_1\circ \ad)^{0}$ and $\ker(pr_2\circ \ad)^0$. Since the groups $\Hg^{\ad}(\sX)_{\R}$ and $\Hg^{\der}(\sX)_{\R}=\Hg(\sX)_{\R}$ are isogenous, $\ker(pr_1\circ \ad)^0$  and $\ker(pr_2\circ \ad)^0$ are also isogenous to $\PU(1,1)$ and commute also. Moreover since $\Hg^{\ad}(\sX)_{\R}$ and $\Hg(\sX)_{\R}$ are isogenous, $(\Hg(\sX)_{\R}\cap C(h_G(i)))^0$ is also isogenous to $C((\ad\circ h_G)(i))$. Since $\ker(pr_1)$ commutes with $\ad(h_G(i))$, one concludes that $\ker(pr_1\circ \ad)^0$ is a nontrivial simple subgroup of  $C(h_G(i))$. Since the only nontrivial simple subgroup of $C(h_G(i))$ is $C^{\der}(h_G(i))$, one gets
$$\ker(pr_1\circ \ad)^0 = C^{\der}(h_G(i)).$$
By analogue arguments, one concludes
$$\ker(pr_2\circ \ad)^0\subset H:= C(h_G(i)h_W(i)).$$
We obtain the desired contradiction by showing that $\ker(pr_1\circ \ad)^0$  and $\ker(pr_2\circ \ad)^0$ cannot commute here.
One has that $C^{\der}(h_G(i))(\R)$ is given by matrices of the form
$$M_2 = \left(\begin{array}{cccc}
\alpha &  0 & \beta & 0 \\
0  &  \bar \alpha & 0 & \bar \beta \\
\bar \beta & 0 & \bar \alpha & 0 \\
0 & \beta & 0 & \alpha \end{array}\right) \ \ \mbox{with} \ \ |\alpha|^2-|\beta|^2 = 1$$
with respect to the basis
$$\{v_{3,0}, v_{0,3}, v_{2,1}, v_{1,2}\}$$
as the reader can easily verify by the description of $C(h_G(i))(\R)\cong \U(1,1)$ in Proposition $\ref{huhu}$ and the description of $\SU(1,1)$ in Remark $\ref{2u}$. Moreover by explicit computations using $\eqref{haha}$, one checks that in $H(\R)$ only the diagonal matrices of the kind $\diag(\xi,\bar \xi,\xi,\bar \xi)$ commute with each element of $C^{\der}(h_G(i))(\R)$. This contradicts our previous conclusion that $H$ contains a subgroup isogenous to $\PU(1,1)$, which commutes with $C^{\der}(h_G(i))(\R)$. Hence the first case cannot hold true.

In the second case $\ad(h_G(i))$ is given by
$$([\diag(i,-i)],[\diag(i,-i)]) \in \PU(1,1)\times \PU(1,1).$$
This implies that $\Hg^{\der}(\sX)= \Hg(\sX)_{\R}$ is contained in the subgroup of $\Sp(H^3(X,\R),Q)$ on which both involutions obtained from conjugation by $h_W(i)$ and $h_G( i)$ coincide. One has that $$h_W(i)=\diag(i,-i,-i,i) \ \ \mbox{and} \ \ h_G(i)=\diag(i,-i,i,-i)$$
with respect to the basis
$$\{v_{3,0}, v_{0,3}, v_{2,1}, v_{1,2}\}.$$
Thus $H$ is the subgroup of $\Sp(H^3(X,\R))$ on which both involutions obtained from conjugation by $h_W(i)$ and $h_G(i)$ coincide as one can easily compute by using the description of $H$ in $\eqref{haha}$. But by Lemma $\ref{keinhaar}$, the group $H$ cannot contain  $\Hg(\sX)_{\R}$. Thus the second case cannot occur.
\end{proof}

\section{The case of a onedimensional period domain}

In this section we will assume that the period domain $D$ has dimension 1 unless stated otherwise. In the previous section we saw that $\Hg^{\ad}(\sX)_{\R}\cong \PU(1,1)$, if $D = 1$. Since
$$\Hg(\sX) = (\SL(H^3(X,\Q)) \cap \MT(\sX))^0$$
(follows from \cite{JCR}, Lemma $1.3.17$), one concludes
$$\Hg^{\ad}(\sX) = \MT^{\ad}(\sX).$$

Recall the definition of Shimura data:

\begin{definition}
Let $G$ be a reductive $\Q$-algebraic group and $h:\BS\to G_{\R}$ be a homomorphism. Then the pair $(G,h)$ is a Shimura datum, if:
\begin{enumerate}
 \item  The group $G^{\ad}$ has no nontrivial direct compact factor over $\Q$.
 \item The conjugation by $h(i)$ is a Cartan involution.
\item The representation $\ad \circ h$ of $\BS$ on ${\rm Lie}(G_{\R})$ is a Hodge structure of type $$(1,-1),(0,0),(-1,1).$$
\end{enumerate}
\end{definition}

We will show that the pair $(\MT(\sX),h_X)$ is a Shimura datum. Moreover we will determine the center of $\Hg(\sX)_{\R}$ and $\Hg(\sX)_{\R}$ in the case of a nondiscrete center. In addition we describe the monodromy in the latter case and give some examples.

\begin{proposition} \label{cent}
The center of $\Hg(\sX)(\R)$ is given by diagonal matrices $\diag(\xi,\xi,\bar\xi,\bar\xi)$ for $|\xi| = 1$ with respect to the basis $\{v_{3,0},v_{2,1},v_{1,2},v_{0,3}\}$.
\end{proposition}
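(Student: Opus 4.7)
The plan is to first show $Z(\Hg(\sX))(\R)\subseteq C(h(S^1))$ by commutativity and then identify the central torus by analyzing $\Hg^{\der}(\sX)_{\R}$. Since $h(S^1)\subseteq\Hg(\sX)(\R)$, every element of $Z(\Hg(\sX))$ must commute with $h(S^1)$, so Remark~\ref{1.2} places $Z(\Hg(\sX))(\R)$ inside the $2$-dimensional compact torus $C(h(S^1))=\{\diag(\xi,\zeta,\bar\zeta,\bar\xi):|\xi|=|\zeta|=1\}$, which is generated (modulo $\{\pm(1,1)\}$) by $h_G(S^1)$ and $h_W(S^1)$. Since $\Hg^{\ad}(\sX)_{\R}\cong\PU(1,1)$ by Theorem~\ref{tzu} in the $D=1$ case, $\Hg^{\der}(\sX)_{\R}$ is $3$-dimensional, isogenous to $\SU(1,1)$, and has rank $1$; combined with $\rk\Sp(H^3(X,\R),Q)=2$ this forces $\dim Z(\Hg(\sX))^0\le 1$. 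The nondiscrete center hypothesis driving this section (see the introduction of Section~3) then gives $\dim Z(\Hg(\sX))^0=1$.

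Next I would identify $\Hg^{\der}(\sX)_{\R}$ explicitly. It acts on the $4$-dimensional symplectic space $H^3(X,\R)$ by a faithful $4$-dimensional symplectic representation of $\SL_2(\R)$. Up to $\Sp(4,\R)$-conjugacy these are exhausted by the irreducible $\mathrm{Sym}^3$ embedding and the reducible $\mathrm{std}\oplus\mathrm{std}$ embedding, the latter being the embedding $M\mapsto\diag(M,\bar M)$ from Proposition~\ref{huhu} that realizes $\Hg^{\der}$ as $C^{\der}(h_G(i))$. In the $\mathrm{Sym}^3$ case, Schur's lemma identifies the centralizer of $\Hg^{\der}$ in $\Sp(4,\R)$ with $\{\pm\id\}$, which would make $Z(\Hg(\sX))$ discrete and contradict $\dim Z(\Hg(\sX))^0=1$. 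Hence $\Hg^{\der}(\sX)_{\R}=C^{\der}(h_G(i))$.

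Finally, the centralizer of $C^{\der}(h_G(i))$ in $\Sp(H^3(X,\R),Q)$ is computed by an argument essentially identical to the proof of Lemma~\ref{komzen} (using the description of $\SU(1,1)$ from Remark~\ref{2u} together with the form of $Q$ from \eqref{ququ}): it equals the central circle $Z(C(h_G(i)))=h_G(S^1)$ of $\U(1,1)$. Thus $Z(\Hg(\sX))^0\subseteq h_G(S^1)$, and equality follows by comparing dimensions. Since $h_G(S^1)$ is a connected $\R$-algebraic group, this is the full center, and its $\R$-points are precisely the diagonal matrices $\diag(\xi,\xi,\bar\xi,\bar\xi)$ with $|\xi|=1$. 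The main obstacle is the representation-theoretic step isolating $\Hg^{\der}(\sX)_{\R}=C^{\der}(h_G(i))$, where the nondiscrete center hypothesis is essential to rule out the $\mathrm{Sym}^3$ embedding.
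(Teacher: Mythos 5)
There is a genuine gap: you invoke a ``nondiscrete center hypothesis driving this section,'' but no such hypothesis exists. Section 3 assumes only that $D$ has dimension $1$; the sentence in its introduction announces that $\Hg(\sX)_{\R}$ itself will be determined \emph{in the case of} a nondiscrete center, but Proposition \ref{cent} is stated and used unconditionally --- it feeds into Corollary \ref{lulu} and Proposition \ref{ShiDa}, which are applied in Section 4 precisely when $Z(\Hg(\sX))$ \emph{is} discrete (there $\Hg(\sX)_{\R}=G_x$ is simple and is essentially a ${\rm Sym}^3$-type embedding). You use nondiscreteness twice in essential ways: to get $\dim Z(\Hg(\sX))^0=1$ and to rule out the ${\rm Sym}^3$ embedding of $\Hg^{\der}(\sX)_{\R}$. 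Stripped of that assumption, your argument says nothing about central elements of finite order such as $\pm\diag(\xi,1,1,\bar\xi)$, $\pm\diag(1,\xi,\bar\xi,1)$ or $\diag(i,-i,i,-i)$, which are exactly what the proposition must exclude. Moreover your route is circular relative to the paper's logic: the equivalence ``$\Hg(\sX)_{\R}=C(h_G(i))$ if and only if the center is nondiscrete'' is Corollary \ref{uzi}, which the paper \emph{deduces from} Proposition \ref{cent}; you are in effect assuming half of that corollary in order to prove the proposition.

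The paper's proof is elementary and works in all cases: a central $Z$ commutes with $h(S^1)$, hence is diagonal by Remark \ref{blbl}; since $\Hg(\sX)_{\R}$ is not compact (Lemma \ref{ccp}) it contains nondiagonal elements, and commuting with these confines $Z$ to finitely many diagonal shapes; the shapes $\pm\diag(\xi,1,1,\bar\xi)$ and $\pm\diag(1,\xi,\bar\xi,1)$ are excluded because their centralizers lie in $H=C(h_G(i)h_W(i))$, which cannot contain $\Hg(\sX)_{\R}$ by Lemma \ref{keinhaar}, and $\diag(\xi,\bar\xi,\xi,\bar\xi)$ is excluded because its centralizer is the compact group $C(h_X(i))\cong\U(2)$. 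As a secondary point, even granting the nondiscrete-center assumption, your classification of faithful $4$-dimensional symplectic representations of $\SL_2(\R)$ omits ${\rm std}\oplus{\rm triv}\oplus{\rm triv}$ (realized inside $\Sp(H^3(X,\R),Q)$ by, e.g., the group $CG$ of \ref{lili}), so the dichotomy ``${\rm Sym}^3$ or ${\rm std}\oplus{\rm std}$'' is not exhaustive as stated, and identifying the ${\rm std}\oplus{\rm std}$ copy with exactly $C^{\der}(h_G(i))$ rather than some other conjugate would need a further argument.
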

\begin{proof}
Each element $Z$ in the center of $\Hg(\sX)(\R)$ commutes in particular with $h_X(S^1)(\R)$. This holds only true, if $Z$ is a diagonal matrix with respect to $\{v_{3,0},v_{2,1},v_{1,2},v_{0,3}\}$ as the conjugation by elements of $h(S^1)(\R)$ in Remark $\ref{blbl}$ shows. The subgroup of the matrices in $\Sp(H^3(X,\R),Q)$, which are diagonal with respect to $\{v_{3,0},v_{2,1},v_{1,2},v_{0,3}\}$, is contained in $C(h_W(i))\cong \U(2)$ and therefore compact. By Lemma $\ref{ccp}$, the group $\Hg(\sX)_{\R}$ cannot be compact. Thus $\Hg(\sX)_{\R}$ contains elements, which are not given by diagonal matrices with respect to $\{v_{3,0},v_{2,1},v_{1,2},v_{0,3}\}$. Since $Z$ has to be real and to commute with the matrices in $\Hg(\sX)(\R)$, which are not diagonal, one concludes that
$$Z =\pm\diag(\xi,1,1,\bar\xi), \ \ Z =\pm\diag(1,\xi,\bar\xi,1),$$
$$Z =\diag(\xi,\xi,\bar\xi,\bar\xi) \ \ \mbox{or} \ \ Z =\diag(\xi,\bar\xi,\xi,\bar\xi)$$
with respect to the basis $\{v_{3,0},v_{2,1},v_{1,2},v_{0,3}\}$. Moreover one has $|\xi| = 1$, since $Z^tQZ=Q$. For $Z =\pm\diag(\xi,1,1,\bar\xi)$ with $\xi \neq \pm 1$ the centralizer $C(Z)$ of $Z$ in $\Sp(H^3(X,\R),Q)$ is given by the group of matrices
\begin{equation}\label{bizarre}
M =\left(\begin{array}{cccc}
\zeta & 0 & 0 & 0\\
0 & \alpha & \beta & 0 \\
0 & \bar \beta & \bar \alpha & 0\\
0 & 0 & 0 & \bar \zeta \end{array}\right) \ \ \mbox{with} \ \ |\zeta| = 1 \ \ \mbox{and} \ \ \left(\begin{array}{cc}
\alpha & \beta \\
 \bar \beta & \bar \alpha \end{array}\right)\in \SU(1,1)
 \end{equation}
as one concludes by computations using $\eqref{ququ}$. Thus one concludes that $C(Z)\subset H$ from the description of $H$ in $\eqref{haha}$.

Moreover for $Z =\pm\diag(1,\xi,\bar\xi,1)$ with $\xi \neq \pm 1$ the centralizer $C(Z)$ is given by
$$
M =\left(\begin{array}{cccc}
\alpha & 0 & 0 & \beta\\
0 & \zeta & 0 & 0 \\
0 & 0 & \bar \zeta & 0\\
\bar \beta & 0 & 0 & \bar \alpha \end{array}\right) \ \ \mbox{with} \ \ |\zeta| = 1 \ \ \mbox{and} \ \ \left(\begin{array}{cc}
\alpha & \beta \\
 \bar \beta & \bar \alpha \end{array}\right)\in \SU(1,1),
$$
which is also a subgroup of $H$ as one concludes from analogue arguments. By Lemma $\ref{keinhaar}$, the group $\Hg(\sX)_{\R}$ cannot be a subgroup of $H$. Since the matrices of the form
$$\pm\diag(\xi,1,1,\bar\xi), \ \ \pm\diag(1,\xi,\bar\xi,1) \ \ \mbox{with} \ \ \xi \neq \pm 1$$
have centralizers contained in $H$, these matrices are not contained in the center of $\Hg(\sX)_{\R}$.

One can also not have that
$$Z=\pm\diag(1,-1,-1,1) \in Z(\Hg(\sX)_{\R}),$$
too, since in this case the centralizer of $Z$ in $\Sp(H^3(X,\R),Q)$ is $H$.

Hence one has
$$Z =\diag(\xi,\xi,\bar\xi,\bar\xi) \ \ \mbox{or} \ \ Z =\diag(\xi,\bar\xi,\xi,\bar\xi).$$
The matrix $\diag(\xi,\bar\xi,\xi,\bar\xi)$ commutes only with elements in $C(h_X(i))\cong\U(2)$, if $\xi \neq \pm 1$. Recall that $\U(2)$ is compact. Moreover
$$\diag(\xi,\xi,\bar\xi,\bar\xi) =\diag(\xi,\bar\xi,\xi,\bar\xi)$$
for $\xi = \pm 1$. Again we use the fact that $\Hg(\sX)_{\R}$ cannot be compact and conclude that $Z =\diag(\xi,\xi,\bar\xi,\bar\xi)$.
\end{proof}

Since
$$h_X(\xi) \in Z(\Hg(\sX)) \Rightarrow \diag(\xi,\xi,\bar\xi,\bar\xi) = \diag(\xi^3,\xi,\bar\xi,\bar\xi^3) \Leftrightarrow\xi^3 = \xi \Leftrightarrow\xi^2 = 1 \Leftrightarrow\xi = \pm 1$$
and $h_X(-1) = -E_4$, one concludes from the previous proposition:

\begin{corollary} \label{lulu}
The kernel of the representation $\ad\circ h$ consists of $\{\pm 1\}$.
\end{corollary}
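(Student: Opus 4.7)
The plan is to simply combine Proposition \ref{cent} with the explicit description of $h_X$ on the basis $\{v_{3,0}, v_{2,1}, v_{1,2}, v_{0,3}\}$. Recall that by definition $h_X(\xi) v = \xi^p \bar\xi^q v$ for $v \in H^{p,3-p}(X)$, so on our chosen basis
$$h_X(\xi) = \diag(\xi^3, \xi, \bar\xi, \bar\xi^3).$$
The kernel of $\ad \circ h$ consists of those $\xi \in S^1$ for which $h_X(\xi)$ lies in the center of $\Hg(\sX)_{\R}$.

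First, I would invoke Proposition \ref{cent}, which states that $Z(\Hg(\sX))(\R)$ consists of matrices of the form $\diag(\zeta, \zeta, \bar\zeta, \bar\zeta)$ with $|\zeta| = 1$. So the membership $h_X(\xi) \in Z(\Hg(\sX))$ forces the two coordinate equations $\xi^3 = \zeta$ and $\xi = \zeta$, hence $\xi^3 = \xi$, which gives $\xi^2 = 1$ and therefore $\xi = \pm 1$.

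Finally, I would verify the reverse inclusion: plugging $\xi = 1$ gives the identity, and plugging $\xi = -1$ gives $\diag(-1,-1,-1,-1) = -E_4$, which is central in $\GL(H^3(X,\R))$ (hence certainly central in $\Hg(\sX)_{\R}$) and is killed by $\ad$. Thus $\ker(\ad \circ h) = \{\pm 1\}$. There is no real obstacle here; the work has already been done in Proposition \ref{cent}, and this corollary is a direct matrix computation on eigenvalues.
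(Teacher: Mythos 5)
Your proof is correct and follows exactly the paper's own argument: the paper likewise combines Proposition \ref{cent} with $h_X(\xi)=\diag(\xi^3,\xi,\bar\xi,\bar\xi^3)$ to force $\xi^3=\xi$, hence $\xi=\pm1$, and notes $h_X(-1)=-E_4$ for the reverse inclusion. No differences worth flagging.
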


\begin{corollary} \label{uzi}
One has $\Hg(\sX)_{\R}= C(h_G(i))$, if and only if $\Hg(\sX)_{\R}$ has a nondiscrete center.
\end{corollary}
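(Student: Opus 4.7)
The equivalence has one immediate direction and one substantive direction. For the forward implication, if $\Hg(\sX)_{\R}=C(h_G(i))$, then by Proposition \ref{huhu} this group is isomorphic to $\U(1,1)$, whose center is isomorphic to $S^1$ by Remark \ref{2u} and is therefore nondiscrete.

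For the converse, suppose $\Hg(\sX)_{\R}$ has nondiscrete center. The key first step is to pin down this center. By Proposition \ref{cent}, every element of $Z(\Hg(\sX))(\R)$ has the form $\diag(\xi,\xi,\bar\xi,\bar\xi)$ with $|\xi|=1$ in the basis $\{v_{3,0},v_{2,1},v_{1,2},v_{0,3}\}$. Using the formula $h_W(\xi)=\diag(\xi,\bar\xi,\xi,\bar\xi)$ recorded in Remark \ref{blbl} together with the relation $h_X=h_G^2\cdot h_W$ from the preliminaries, one reads off $h_G(\xi)=\diag(\xi,\xi,\bar\xi,\bar\xi)$, so the admissible matrices above are precisely the $\R$-points of the one-dimensional algebraic subtorus $h_G(S^1)\subset\Sp(H^3(X,\R),Q)$. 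Hence $Z(\Hg(\sX))^0\subseteq h_G(S^1)$ as algebraic groups, and the nondiscreteness hypothesis forces equality $Z(\Hg(\sX))^0=h_G(S^1)$. In particular $h_G(i)$ lies in $\Hg(\sX)_{\R}$ and is central, which yields $\Hg(\sX)_{\R}\subseteq C(h_G(i))\cong\U(1,1)$ and hence $\Hg^{\der}(\sX)_{\R}\subseteq C^{\der}(h_G(i))\cong\SU(1,1)$.

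Next I would fix $\Hg^{\der}(\sX)_{\R}$ by a dimension count. By Theorem \ref{tzu}, $\Hg^{\ad}(\sX)_{\R}$ is either $\PU(1,1)$ (real dimension $3$) or $\Sp^{\ad}_{\R}(4)$ (real dimension $10$); since $\Hg^{\der}(\sX)_{\R}$ is isogenous to $\Hg^{\ad}(\sX)_{\R}$ and sits inside the three-dimensional simple group $\SU(1,1)$, the second option is excluded. Thus $\Hg^{\der}(\sX)_{\R}$ is a connected three-dimensional semisimple subgroup of $\SU(1,1)$ and must coincide with all of $\SU(1,1)=C^{\der}(h_G(i))$. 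Finally, by \ref{redi} the reductive group $\Hg(\sX)_{\R}$ is the almost direct product of $Z(\Hg(\sX))_{\R}^0=h_G(S^1)$ and $\Hg^{\der}(\sX)_{\R}=C^{\der}(h_G(i))$, which matches exactly the almost direct product decomposition $\U(1,1)=S^1\cdot\SU(1,1)$ of Remark \ref{2u}; therefore $\Hg(\sX)_{\R}=C(h_G(i))$. The only step that does real work is the first one: promoting the set-theoretic containment of Proposition \ref{cent} to the equality $Z(\Hg(\sX))^0=h_G(S^1)$ via the nondiscreteness assumption, since it is this upgrade that finally puts $h_G(i)$ itself inside $\Hg(\sX)_{\R}$.
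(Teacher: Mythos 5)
Your proof is correct and follows the paper's own argument almost step for step: both use Proposition \ref{cent} to identify $Z(\Hg(\sX)_{\R})^0$ with the one-dimensional torus $h_G(S^1)$, deduce $\Hg(\sX)_{\R}\subseteq C(h_G(i))$, and then show $\Hg^{\der}(\sX)_{\R}=C^{\der}(h_G(i))$ before reassembling the almost direct product. The only divergence is in that last step, where the paper argues more elementarily via Lemma \ref{ccp} (non-compactness forces a nontrivial derived subgroup, which must then be all of $\SU(1,1)$ since $\SU(1,1)$ has no proper semisimple subgroup), whereas you invoke the dimension dichotomy of Theorem \ref{tzu}; both routes are legitimate and available at this point in the text.
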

\begin{proof}
Due to the fact that $C(h_G(i))\cong \U(1,1)$ has a nondiscrete center, it is clear that $\Hg(\sX)_{\R}$ has a nondiscrete center, if $\Hg(\sX)_{\R}= C(h_G(i))$. Conversely, if the center $Z(\Hg(\sX)_{\R})$ is nondiscrete, $\dim Z(\Hg(\sX)_{\R})\geq 1$. Moreover the $\R$-valued points of $Z(\Hg(\sX)_{\R})$ are a subgroup of the group of diagonal matrices $\diag(\xi,\xi,\bar\xi,\bar\xi)$ for $|\xi| = 1$ with respect to the basis $\{v_{3,0},v_{2,1},v_{1,2},v_{0,3}\}$ (see Proposition $\ref{cent}$). Since the latter group is given by the onedimensional group $h_G(S^1)(\R)$, one concludes that $Z(\Hg(\sX)_{\R}) \supseteq h_G(S^1)$. Thus $\Hg(\sX)_{\R}\subseteq C(h_G(S^1))$. Recall that reductive groups are almost direct products of their centers and their derived subgroups (see $\ref{redi}$). Moreover note that $\Hg(\sX)_{\R}$ cannot commutative. Otherwise it would be a subgroup of the compact torus
$$C(h(S^1))\cong S^1\times S^1$$
(compare Remark $1.2$), which contradicts the fact that $\Hg(\sX)_{\R}$ cannot be compact (see Lemma $\ref{ccp}$). Thus $\Hg(\sX)_{\R}$ has a nontrivial derived subgroup. Due to the fact that
$$C^{\der}(h_G(S^1)) = C^{\der}(h_G(i)) \cong \SU(1,1)$$
contains no semisimple proper subgroup and does not contain $h_G(S^1)$, one concludes $\Hg(\sX)_{\R}= C(h_G(i))$.
\end{proof}

\begin{proposition} \label{ShiDa}
The pair $(\MT(\sX),h_X)$ is a Shimura datum, if $D\cong \B_1$.
\end{proposition}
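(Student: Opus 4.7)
My plan is to verify separately the three axioms of a Shimura datum for the pair $(\MT(\sX),h_{X})$.

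Axiom (1) is immediate: we noted at the start of this section that $\MT^{\ad}(\sX)=\Hg^{\ad}(\sX)$, and under the hypothesis $D\cong\B_{1}$ we have $\Hg^{\ad}(\sX)_{\R}\cong\PU(1,1)$, which is $\R$-simple and noncompact, so $\MT^{\ad}(\sX)$ carries no nontrivial direct compact factor over $\Q$.

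Axiom (2) follows from Lemma \ref{carty}: conjugation by $h_{W}(i)$ yields a Cartan involution on $\Hg^{\ad}(\sX)_{\R}=\MT^{\ad}(\sX)_{\R}$, and since $h_{W}(i)=-h_{X}(i)$ while $-\id\in\ker(\ad)$ by Corollary \ref{lulu}, conjugation by $h_{X}(i)$ induces the same automorphism of the adjoint group.

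Axiom (3) is the substantive step. Using the conjugation formula in Remark \ref{blbl}, the weight-$0$ bigrading of $\fsp(H^{3}(X,\R),Q)_{\C}$ induced by $\ad\circ h_{X}$ assigns to the matrix entry at position $(i,j)$ in the basis $\{v_{3,0},v_{2,1},v_{1,2},v_{0,3}\}$ the Hodge type $(p,-p)$ with $p=(c_{i}-c_{j})/2$ and $c=(3,1,-1,-3)$; so the possible types are $(p,-p)$ for $p\in\{-3,\dots,3\}$. Since the central torus of $\MT(\sX)$ acts trivially via $\ad$ and therefore contributes only to the $(0,0)$-part, it suffices to show $\Lie(\Hg(\sX)_{\R})\subseteq\fsp^{(-1,1)}\oplus\fsp^{(0,0)}\oplus\fsp^{(1,-1)}$. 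I split according to Corollary \ref{uzi}. If $Z(\Hg(\sX)_{\R})$ is nondiscrete, then $\Hg(\sX)_{\R}=C(h_{G}(i))$, and the block-diagonal description in Proposition \ref{huhu} shows its nonzero entries all lie at positions $(i,j)$ with $i,j\in\{1,2\}$ or $i,j\in\{3,4\}$, hence satisfy $|c_{i}-c_{j}|\le 2$ and fall in types $(0,0)$ or $(\pm 1,\mp 1)$. If instead $Z(\Hg(\sX)_{\R})$ is discrete, then $\Hg(\sX)_{\R}$ is semisimple with adjoint $\PU(1,1)$, and the prescribed weight spectrum $3,1,-1,-3$ of $h_{X}(S^{1})$ on $H^{3}(X,\R)$ forces the representation to factor through $\SU(1,1)$ (not $\PU(1,1)$, whose representations carry only even weights) and to be the third symmetric power of the standard representation; an explicit computation shows that in this embedding the root vectors occupy only the positions $(i,i+1)$ and $(i+1,i)$ for $i=1,2,3$, all of type $(\pm 1,\mp 1)$, while the Cartan lies in $(0,0)$.

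The main obstacle is axiom (3) in the discrete-center sub-case, where $\Lie(\Hg(\sX)_{\R})$ cannot be read off from an explicit ambient matrix group and must instead be pinned down by the representation theory of $\SU(1,1)$; the decisive input is the prescribed weight spectrum of $h_{X}(S^{1})$ on $H^{3}(X,\R)$, which uniquely determines the embedding into $\Sp_{4}$.
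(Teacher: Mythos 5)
Your proof is correct, and for the substantive axiom it takes a genuinely different route from the paper's. Axioms (1) and (2) are handled exactly as in the paper, via $\MT^{\ad}(\sX)=\Hg^{\ad}(\sX)\cong\PU(1,1)$ and Lemma \ref{carty}. For axiom (3) the paper argues uniformly on the adjoint group: the Hodge structure $\ad\circ h$ has weight zero, so all its characters are of the form $(z/\bar z)^k$; since $\dim\MT^{\ad}(\sX)_{\R}=3$, the Lie algebra is a one-dimensional Cartan (trivial character) plus two root spaces with characters $(z/\bar z)^{\pm k}$, and Corollary \ref{lulu} (the kernel of $\ad\circ h|_{S^1}$ is exactly $\{\pm1\}$) forces $k=1$. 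You instead work inside the ambient $\fsp(H^3(X,\R),Q)$ with its weight-zero bigrading, split into the two cases of Corollary \ref{uzi}, read off the nondiscrete-center case from the block shape of $C(h_G(i))$ in Proposition \ref{huhu}, and in the discrete-center case invoke $\fsl_2$ representation theory: the weight spectrum $3,1,-1,-3$ of the maximal torus $h(S^1)$ on $H^3(X,\C)$ forces the four-dimensional representation to be the third symmetric power of the standard one, so root vectors shift the weight by $\pm 2$ and occupy only positions of type $(\pm1,\mp1)$. Both arguments are sound; the paper's is shorter and avoids the case split, while yours makes visible already here the tridiagonal shape of ${\rm Lie}(\Hg(\sX))$ that the paper only derives afterwards, in Section 4, as a consequence of this very proposition. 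One small caveat: your closing remark that the weight spectrum ``uniquely determines the embedding into $\Sp_4$'' overstates both what is true and what you need --- Section 4 exhibits a one-parameter family $G_x$, $|x|=\frac{2}{\sqrt{3}}$, of such embeddings; your argument only requires that the root vectors raise or lower the $h(S^1)$-weight by $2$, which holds for every embedding with that weight spectrum.
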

\begin{proof}
By our previous results and assumptions,
$$\MT^{\ad}(\sX)_{\R} =\Hg^{\ad}(\sX)_{\R}\cong \PU(1,1).$$
Thus $\MT^{\ad}(\sX)$ is simple and noncompact. Moreover $\ad(h(i))$ yields a Cartan involution (see Lemma $\ref{carty}$). Due to the fact that the conjugation by a diagonal matrix $\diag(a,\ldots,a)$ is the identity map, the weight homomorphism of the Hodge structure $\ad_{\MT(\sX)_{\R}} \circ h$ is given by $\G_{m,\R} \to \{e\}$. Thus the Hodge structure $\ad_{\MT(\sX)_{\R}} \circ h$ has weight zero and all characters of the representation $\ad_{\MT(\sX)_{\R}} \circ h$ are given by $(z/\bar z)^k$ with $k \in \Z$. By Corollary $\ref{lulu}$, the kernel of $\ad\circ h|_{S^1}$ consists of $\{\pm 1\}$. Since $\dim (\MT^{\ad}(\sX)_{\R}) = 3$, this implies that the representation $\ad_{\MT(\sX)_{\R}} \circ h$ is a Hodge structure of type $(1,-1),(0,0),(-1,1)$. Thus we have a Shimura datum as claimed.
\end{proof}

The variation $\sV$ of weight 3 Hodge structures of a nonisotrivial family $\sY \to \sZ$ of Calabi-Yau 3-manifolds has an underlying local system $\sV_{\Z}$ corresponding to an up to conjugation unique monodromy representation
$$\rho:\pi_1(\sZ,z) \to \GL(H^3(\sY_z,\Z)).$$
Let $\sY_z \cong X$. The algebraic group $\Mon^0(\sY)$ denotes the connected component of identity of the Zariski closure of $\rho(\pi_1(\sZ,z))$ in $\GL(H^3(X,\Q))$. The group
$\Mon^0(\sY)$ is a normal subgroup of $\MT^{\der}(\sY)$, if $\sZ$ is a connected complex algebraic manifold (see \cite{Mo}, Theorem $1.4$).
Since $\MT^{\der}(\sY)=\Hg^{\der}(\sY)$ (follows from \cite{JCR}, Corollary $1.3.19$) and $\Sp(H^3(X,\Q),Q)$ is simple, one concludes:

\begin{proposition}
If $\sV_{\Z}$ has an infinite monodromy group, $\sZ$ is a connected complex algebraic manifold, $\sY_z \cong X$ and
$$\Hg(\sY) = \Sp(H^3(X,\Q),Q),$$ one has also
$$\Mon^0(\sY) = \Sp(H^3(X,\Q),Q).$$
\end{proposition}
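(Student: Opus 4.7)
The plan is to combine the three facts collected just before the statement: (a) by Moonen's theorem \cite{Mo}, Theorem $1.4$, $\Mon^0(\sY)$ is a normal subgroup of $\MT^{\der}(\sY)$ whenever $\sZ$ is a connected complex algebraic manifold; (b) by \cite{JCR}, Corollary $1.3.19$, $\MT^{\der}(\sY) = \Hg^{\der}(\sY)$; and (c) $\Sp(H^3(X,\Q),Q)$ is $\Q$-simple.

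First I would observe that under the hypothesis $\Hg(\sY) = \Sp(H^3(X,\Q),Q)$ the derived subgroup $\Hg^{\der}(\sY)$ equals $\Sp(H^3(X,\Q),Q)$ itself, because $\Sp(H^3(X,\Q),Q)$ is semisimple (indeed simple) and therefore coincides with its own derived group. Combining with (b) gives $\MT^{\der}(\sY) = \Sp(H^3(X,\Q),Q)$, so that by (a) the connected algebraic group $\Mon^0(\sY)$ is a normal connected subgroup of the simple group $\Sp(H^3(X,\Q),Q)$.

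Next I would use simplicity: a connected normal subgroup of a simple algebraic group is either trivial or the whole group. It remains to rule out the trivial case, which is where the hypothesis on infinite monodromy enters. Since $\rho(\pi_1(\sZ,z))$ is infinite, its Zariski closure in $\GL(H^3(X,\Q))$ has positive dimension, hence the connected component of identity $\Mon^0(\sY)$ is a positive-dimensional algebraic group, in particular nontrivial. Therefore $\Mon^0(\sY) = \Sp(H^3(X,\Q),Q)$.

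There is no real obstacle here; the statement is essentially a formal consequence of the cited results together with the simplicity of $\Sp(H^3(X,\Q),Q)$. The only small point to be careful about is that ``simple'' is used in the sense of $\ref{redi}$ (no proper connected normal subgroup), which is exactly what is needed to conclude from the normality of $\Mon^0(\sY)$.
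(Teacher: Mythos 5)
Your argument is correct and is exactly the one the paper intends: it assembles Moonen's normality result, the identity $\MT^{\der}(\sY)=\Hg^{\der}(\sY)$, the simplicity of $\Sp(H^3(X,\Q),Q)$, and the infinite-monodromy hypothesis to rule out the trivial case. The paper merely leaves these steps implicit in the sentence preceding the proposition, so your write-up is the same proof spelled out.
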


Consider the Kuranishi family $\sX\to B$ of $X$ and the period map
$$p:B \to {\rm Grass}(H^3(X,\C),b_3(X)/2)$$
associating to each $b \in B$ the subspace
$$F^2(H^3(\sX_b,\C))\subset H^3(\sX_b,\C) \cong H^3(\sX_B,\C) \cong H^3(X,\C)$$
as described in \cite{Voi}, Chapter 10. We say that $F^2(\sH^3)_B$ is constant, if the period map $p:B \to {\rm Grass}(H^3(X,\C),b_3(X)/2)$ is constant. Moreover recall that $\sY \to \sZ$ is a maximal family of Calabi-Yau 3-manifolds, if $\sZ$ can be covered by open subsets $U$ such that each $\sY_U$ is isomorphic to a Kuranishi family.

\begin{Theorem} \label{semifin}
Assume that $\sZ$ is a connected complex algebraic manifold and $f:\sY \to \sZ$ is a maximal family of Calabi-Yau 3-manifolds with $\sY_z \cong X$ and an infinite monodromy group. Then the following statements are equivalent:
\begin{enumerate}
\item \label{mon1} One has that $F^2(\sH^3)_B$ is constant.
\item \label{mon2} The monodromy representation $\rho$ of  $R^3f_*\Q$ satisfies
$$\rho(\gamma)(F^2(H^3(X,\C))) = F^2(H^3(X,\C)) \ \ (\forall \gamma \in \pi_1(\sZ,z)).$$
\item \label{mon3} One has $$\Hg(\sY)_{\R} = C(h_G(i)).$$
\end{enumerate}
\end{Theorem}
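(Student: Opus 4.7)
The plan is to prove the three conditions equivalent by establishing the cycle $(\ref{mon1}) \Rightarrow (\ref{mon2}) \Rightarrow (\ref{mon3}) \Rightarrow (\ref{mon1})$.

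For $(\ref{mon1}) \Rightarrow (\ref{mon2})$, I would lift the period map to the universal cover $\tilde\sZ$, obtaining a holomorphic map $\tilde p : \tilde\sZ \to {\rm Grass}$. Since the Kuranishi base $B$ is an open subset of $\sZ$ on which the period map is constant, $\tilde p$ is constant on a nonempty open subset of the connected complex manifold $\tilde\sZ$, and the identity principle for holomorphic maps makes $\tilde p$ globally constant. The fiberwise subspace $F^2(H^3(X,\C))$ is then the constant value, and parallel transport around any loop returns the same subspace, so $\rho(\gamma)(F^2(H^3(X,\C))) = F^2(H^3(X,\C))$ for every $\gamma \in \pi_1(\sZ,z)$.

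For $(\ref{mon2}) \Rightarrow (\ref{mon3})$, monodromy commutes with complex conjugation because of the integral structure, so preserving $F^2$ forces it to preserve $\bar F^2$ as well. Since these are the $\pm i$-eigenspaces of $h_G(i)$, I get $\Mon^0(\sY) \subseteq C(h_G(i))$. The previous proposition then rules out $\Hg(\sY) = \Sp(H^3(X,\Q),Q)$: otherwise $\Mon^0(\sY)$ would be all of $\Sp$, which is not contained in $C(h_G(i))$. By Theorem \ref{tzu} the remaining option is $\Hg^{\ad}(\sY)_\R \cong \PU(1,1)$. By \cite{Mo} Theorem $1.4$ together with $\Hg^{\der} = \MT^{\der}$, the group $\Mon^0(\sY)$ is a connected normal subgroup of $\Hg^{\der}(\sY)_\R$; infinite monodromy makes it positive-dimensional, and the simplicity of $\PU(1,1)$ forces $\Mon^0(\sY) = \Hg^{\der}(\sY)_\R \cong \SU(1,1)$, sitting inside $C(h_G(i)) \cong \U(1,1)$ via Proposition \ref{huhu}. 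A centralizer computation in $\Sp(H^3(X,\R),Q)$ (using Schur's lemma on the irreducible $\SU(1,1)$-representation on $F^2(X)$) yields $Z(\Hg(\sY)_\R)^0 \subseteq h_G(S^1)$, hence $\Hg(\sY)_\R \subseteq C(h_G(i))$. Writing $h(z)$ in the almost direct product decomposition of $C(h_G(i))$ as $h_G(z^2) \cdot \tilde h(z)$ with $\tilde h(z) \in \SU(1,1)$, I extract $h_G(S^1) \subseteq \Hg(\sY)_\R$, producing a nondiscrete center, so Corollary \ref{uzi} (or the inclusion $\Hg(\sY)_\R \supseteq h_G(S^1) \cdot \SU(1,1) = C(h_G(i))$) gives the equality.

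For $(\ref{mon3}) \Rightarrow (\ref{mon1})$, I would use continuity at $0 \in B$. On the complement of countably many proper analytic subsets of $B$ one has $\Hg(\sX_b) = \Hg(\sY)_\R = C(h_G(i))$, so $h_b(S^1) \subseteq C(h_G(i))$ and hence $F^2(H^3(X,\C))$ is $h_b(S^1)$-invariant. As a two-dimensional invariant subspace of the decomposition $\bigoplus_{p+q=3} H^{p,q}(\sX_b)$ under an $S^1$-action with four distinct characters, $F^2(H^3(X,\C))$ must be the direct sum of exactly two of the lines $H^{p,q}(\sX_b)$. At $b=0$ this sum is $H^{3,0}(X) \oplus H^{2,1}(X)$, so the continuity of the Hodge decomposition pins the pair of weights down to $\{(3,0),(2,1)\}$ for $b$ in a neighborhood of $0$ in the generic set, giving $F^2(H^3(X,\C)) = F^2(H^3(\sX_b,\C))$ there. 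Holomorphy of the period map on the connected $B$ then propagates the constancy to all of $B$.

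The main obstacle will be the step $(\ref{mon2}) \Rightarrow (\ref{mon3})$: passing from the analytic condition of monodromy preserving $F^2$ to the precise algebraic identification $\Hg(\sY)_\R = C(h_G(i))$ requires synthesizing Theorem \ref{tzu}, the normality of $\Mon^0$ inside the derived Mumford-Tate group, and a centralizer computation in $\Sp(H^3(X,\R),Q)$ that both traps $\Hg(\sY)_\R$ inside $\U(1,1)$ and, via $h(S^1)$, forces the central torus to be the full $h_G(S^1)$ rather than a finite subgroup.
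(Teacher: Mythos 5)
Your proposal is correct and follows essentially the same route as the paper: $(1)\Rightarrow(2)$ by constancy of the period map, $(2)\Rightarrow(3)$ by trapping $\Mon^0(\sY)$ in $C(h_G(i))$, using its normality in $\Hg^{\der}(\sY)_{\R}$ together with the simplicity coming from Theorem \ref{tzu} to get $\Hg^{\der}(\sY)_{\R}=C^{\der}(h_G(i))$ and then Corollary \ref{uzi}, and $(3)\Rightarrow(1)$ by the continuity of the $VHS$ forcing $h_G(i)=h_G(i)_b$. The only cosmetic differences are that the paper cites \cite{JCR3}, Section 2 for $(1)\Rightarrow(2)$ where you sketch the identity-principle argument directly, and that your final step of $(2)\Rightarrow(3)$ makes explicit the centralizer computation that the paper packages into Proposition \ref{cent} and Corollary \ref{uzi}.
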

\begin{proof}
In \cite{JCR3}, Section 2, we have seen that $\eqref{mon1}$ implies $\eqref{mon2}$.

In the case of $\eqref{mon2}$ we assume that
$$\rho(\gamma)(F^2(H^3(X,\C))) = F^2(H^3(X,\C)) \ \  \mbox{and} \ \ \rho(\gamma)(H^3(X,\R)) = H^3(X,\R) \ \ (\forall \gamma \in \pi_1(\sZ,z)).$$
Hence one has also that
$$\rho(\gamma)(\overline{F^2(H^3(X,\C))}) = \overline{F^2(H^3(X,\C))} \ \ (\forall \gamma \in \pi_1(\sZ,z)).$$
Thus one concludes that $h_G(S^1)$ commutes with $\Mon^0(\sY)$. Hence $\Mon^0(\sY)_{\R}$ is a semisimple group contained in the simple group $C^{\der}(h_G(i))\cong \SU(1,1)$. This implies that $C^{\der}(h_G(i)) = \Mon^0(\sY)_{\R}$. Since $\Hg^{\ad}(\sY) = \Hg^{\ad}(\sX)$ is simple by Theorem $\ref{tzu}$, we conclude
$$C^{\der}(h_G(i)) = \Mon^0(\sY)_{\R} = \Hg^{\der}(\sX)_{\R}$$
from the fact that $\Mon^0(\sY)_{\R}$ is a normal subgroup of $\Hg^{\der}(\sX)_{\R}$. Due to the fact that $h(S^1)$ is not contained in $C^{\der}(h_G(i))$, the reductive group $\Hg(\sX)_{\R}$ has a nontrivial center. Thus from Corollary $\ref{uzi}$, we conclude $\eqref{mon3}$.

Now assume that $\Hg(\sX)_{\R} = C(h_G(i))$. In this case $h_G(i)$ commutes with the elements of $h_{b}(S^1)(\R)$ for each $b \in B$. Hence $h_G(S^1)$ is contained in $C(h_{b}(S^1))$. Due to the fact that $C(h_{b}(S^1))$ contains only the complex structures $\pm h_W(i)_{b}$ and $\pm h_G(i)_{b}$ (see Remark $\ref{1.2}$), one concludes $h_G(i) =h_G(i)_{b}$ from the fact that the $VHS$ is continuous. In other terms $F^2(\sH^3)_B$ is constant.
\end{proof}

\begin{example}
We consider an example, which occurs in \cite{JCR}, $11.3.11$. Let $\sE\to \bP^1\setminus\{0,1,\infty\}$ denote the family of elliptic curves
$$\bP^2 \supset V(y^2z-x(x-z)(x-\lambda z)) \to \lambda \in \bP^1\setminus\{0,1,\infty\}$$
with involution $\iota_{\sE}$ given by $y \to -y$ over $\bP^1\setminus\{0,1,\infty\}$.
Moreover there is a $K3$ surface $S$ with involution $\iota_S$ such that
$$\iota_S|_{H^{1,1}(S)} = \id \ \ \mbox{and} \ \ \iota_S|_{H^{2,0}(S)\oplus H^{0,2}(S)} = -\id.$$
By blowing up the singular sections of the family $\sE \times S/\langle(\iota_{\sE}, \iota_S)\rangle$ over $\bP^1\setminus\{0,1,\infty\}$, one obtains a family $\sY$ of Calabi-Yau 3-manifolds. The Hodge numbers are given by $h^{1,1}=61$ and $h^{2,1} = 1$.

It is a well-known fact that the family $\sE$ has a locally injective period map to the upper half plane. By \cite{JCR}, Example $1.6.9$,
$$F^3(H^3(\sY_{\lambda},\C)) = H^{2,0}(S)\otimes H^{1,0}(\sE_{\lambda}) \ \ \mbox{and} \ \ F^2(H^3(\sY_{\lambda},\C)) = H^{2,0}(S)\otimes H^1(\sE_{\lambda},\C).$$
Thus the $F^2$-bundle in the $VHS$ of $\sY$ is constant and one concludes that $\sY$ a maximal family from the fact that the period map associated with the $F^3$-bundle is locally injective. By Theorem $\ref{semifin}$, one concludes $\Hg(\sY)_{\R}=C(h_G(i))$.
\end{example}

\begin{remark} \label{vu}
For the proof that $\eqref{mon3} \Rightarrow \eqref{mon1}$ in Theorem $\ref{semifin}$ one does not need the assumption that the base is algebraic. It is sufficient to consider the local universal deformation. Thus from \cite{JCR3}, Section 2 one concludes that $X$ cannot occur as a fiber of a family with maximally unipotent monodromy, if $\Hg(\sX)_{\R} = C(h_G(i))$.
\end{remark}

\begin{example}
In \cite{JCR3} one finds an example of a Calabi-Yau 3-manifold $X$ with Hodge numbers $h^{2,1}(X) = 1$ and $h^{1,1}(X)=73$. The manifold $X$ has an automorphism $\alpha$ of degree 3, which extends to an automorphism of $\sX$ over $B$ and acts by a primitive cubic root of unity on $F^2(H^3(X,\C))$. Since $\alpha$ yields an isometry of the Hodge structure of each fiber, the generic Hodge group is contained in the centralizer $C(\alpha)$ of $\alpha$ in $\Sp(H^3(X,\Q),Q)$. By \cite{JCR3}, Lemma $3.4$, one has a description of $C(\alpha)_{\R}$ coinciding with the description of $C(h_G(i))$ in Proposition $\ref{huhu}$. Hence $C(\alpha)_{\R}=C(h_G(i))$. Due to the fact that $C^{\der}(h_G(i))$ does not contain any proper simple subgroup and $\Hg^{\der}(\sX)_{\R}$ is a nontrivial simple subgroup of $C^{\der}(h_G(i))$, one concludes $\Hg(\sX)_{\R}=C(h_G(i))$.
\end{example}

\section{The third case}

Recall that $K$ denotes a maximal compact subgroup of $\Hg(\sX)_{\R}$ and that
$$D = \Hg^{\ad}(\sX)(\R)/\ad(K(\R))$$
is a Hermitian symmetric domain (see Proposition $\ref{juhuu}$). For $D = \B_1$ we have seen that $\Hg(\sX)_{\R}\cong C(h_G(i))$, if and only if $\Hg(\sX)$ has a nondiscrete center (see Corollary $\ref{uzi}$). In Section 2 we have seen that
$$\Hg^{\ad}(\sX) = \Sp^{\ad}(H^3(X,\Q),Q) \ \ \mbox{or} \ \ \Hg^{\ad}(\sX)_{\R} = \PU(1,1).$$
It remains to consider the third case that $\Hg(\sX)$ has a discrete center and $D\cong\B_1$. Thus assume that $\Hg(\sX)$ is simple and has dimension 3. We will study $\Hg(\sX)_{\R}$ by computing its Lie algebra in this case. Let us start with the following observation:

Recall that $\GSp(H^3(X,\R),Q)$ is given by the matrices $M\in H^3(X,\R)$ with
$$M^tQM = rQ \ \ \mbox{for some} \ \ r \in \R.$$
Moreover recall that each representation of $\BS$ on a real vector space $V$ is a Hodge structure by the decomposition of $V_{\C}$ into the eigenspaces with respect to the characters $z^p\bar z^q$ for $p, q \in \Z$ (see \cite{Dat}, $1.1.1$). The conjugation by each diagonal matrix $\diag(a,a,a,a) \in h(\BS)(\R)$ fixes each element of $\GSp(H^3(X,\R),Q)$. Thus the weight homomorphism 
$$\ad_{\GSp(H^3(X,\R),Q)} \circ h \circ w$$
is given by $\G_{m,\R}\to \{e\}$
and the Hodge structure $\ad_{\GSp(H^3(X,\R),Q)} \circ h$ is of weight zero. Therefore the algebra ${\rm Lie}(\GSp(H^3(X,\R),Q))_{\C}$ decomposes into eigenspaces with respect to the characters $(z/\bar z)^k$ for $k \in \Z$.

\begin{pkt} \label{lili}
Now we compute the eigenspace decomposition of ${\rm Lie}(\Sp(H^3(X,\R),Q))$ with respect to the representation $(\ad_{\Sp(H^3(X,\R),Q)}\circ h_X)$ of $S^1$. This description is obtained from the following facts: Each of the following 3-dimensional subgroups of $\Sp(H^3(X,\R),Q)$ given with respect to the basis $\{v_{3,0},v_{2,1},v_{1,2},v_{0,3}\}$ contains an 1-dimensional subgroup on which $h(S^1)$ acts trivially by conjugation. Moreover the kernel of the respective restricted adjoint representation on the respective Lie algebra can be obtained from the description of the conjugation by elements of $h(S^1)$ in Remark $\ref{blbl}$. This allows us to determine the characters of the respective restricted adjoint representation, since we have only characters of the type $(z/\bar z)^k$ for $k \in \Z$ as we have seen above. Since $$10 = \dim \Sp(H^3(X,\R),Q),$$ one checks easily that one can find a basis of eigenvectors by the computations below:
\begin{itemize}
 \item The centralizer $C(h(S^1))$ is a 2-dimensional torus (see Remark $\ref{1.2}$), which yields a corresponding 2-dimensional eigenspace with character 1.
 \item The group $C^{\der}(h_W(i))$ is given by the matrices
$$M =\left(\begin{array}{cccc}\alpha & 0 & \beta & 0\\
0 & \bar \alpha & 0 & - \beta \\
-\bar \beta  & 0 & \bar \alpha & 0 \\
0 & \bar\beta & 0 & \alpha \end{array}\right) \ \ \mbox{with} \ \ |\alpha|^2-|\beta|^2 = 1$$
(this follows from Proposition $\ref{uhuh}$ and Remark $\ref{2u}$). The complexified Lie algebra of $C^{\der}(h_W(i))$ has an eigenspace with character $(\bar z/z)^2$ and an eigenspace with character $(z/\bar z)^2$.
 \item The group $C^{\der}(h_G(i))$ is given by the matrices
$$M =\left(\begin{array}{cccc}\alpha & \beta & 0 & 0\\
\bar \beta & \bar \alpha & 0 & 0 \\
0 & 0 & \bar \alpha & \bar \beta \\
0 & 0 & \beta & \alpha \end{array}\right) \ \ \mbox{with} \ \ |\alpha|^2-|\beta|^2 = 1$$
(this follows from Proposition $\ref{huhu}$ and Remark $\ref{2u}$). The complexified Lie algebra of $C^{\der}(h_G(i))$ has an eigenspace with character $\bar z/z$ and an eigenspace with character $z/\bar z$.
 \item By explicit computations using the definition of $Q$ (see $\eqref{ququ}$), one can easily check that the group $CG$ given by the matrices
$$M =\left(\begin{array}{cccc}
1 & 0 & 0 & 0\\
0 & \alpha & \beta & 0 \\
0 & \bar \beta & \bar \alpha & 0\\
0 & 0 & 0 & 1 \end{array}\right) \ \ \mbox{with} \ \ \det(M) = 1$$
is a subgroup of $\Sp(H^3(X,\R),Q)$. The complexified Lie algebra of the group $CG$ has an eigenspace with character $\bar z/z$ and an eigenspace with character $z/\bar z$.
 \item By explicit computations using the definition of $Q$ (see $\eqref{ququ}$), one can easily check that the group given by the matrices
$$M =\left(\begin{array}{cccc}
\alpha & 0 & 0 & \beta\\
0 & 1 & 0 & 0 \\
0 & 0 & 1 & 0\\
\bar \beta & 0 & 0 & \bar \alpha \end{array}\right) \ \ \mbox{with} \ \ \det(M) = 1$$
is a subgroup of $\Sp(H^3(X,\R),Q)$. The complexified Lie algebra of this group has an eigenspace with character $(\bar z/z)^3$ and an eigenspace with character $(z/\bar z)^3$.
\end{itemize}
\end{pkt}

From now on we make computations with respect to the basis $\{v_{3,0},v_{2,1},v_{1,2},v_{0,3}\}$. The Lie algebra of $\Hg(\sX)_{\R}$ contains clearly the vector space
$${\rm Lie}(h_X(S^1)) = {\rm Span}_{\R}(\diag(3i,i,-i,-3i)).$$
Recall that the representation $\ad \circ h_X$ of $S^1$ on ${\rm Lie}(\Hg(\sX))$ is a weight zero Hodge structure of type $(1,-1),(0,0),(-1,1)$ (follows from Proposition $\ref{ShiDa}$) and the maximal torus of the 3-dimensional simple group $\Hg(\sX)_{\R}$ has dimension 1. The direct sum of the eigenspaces with the characters 1, $z/\bar z$ and $\bar z/z$ coincides with
$${\rm Lie}(C^{\der}(h_G(i)))_{\C}\oplus{\rm Lie}(CG)_{\C}$$
as one concludes from $\ref{lili}$. Hence
$${\rm Lie}(\Hg(\sX))\subset {\rm Lie}(C^{\der}(h_G(i)))\oplus{\rm Lie}(CG).$$
Moreover recall that ${\rm Lie}(\Hg(\sX)_{\R}) \cong  \fsu(1,1)$, where 
$$\fsu(1,1) = {\rm Span}_{\R}(H,X,Y) \ \ \mbox{for} \ \ H = \left(\begin{array}{cc}
i & 0\\
0 & -i \end{array}\right), \ \ X = \left(\begin{array}{cc}
0 & 1\\
1 & 0 \end{array}\right), \ \ Y= \left(\begin{array}{cc}
0 & i\\
-i & 0 \end{array}\right)$$
(compare Remark $\ref{lisu}$) and $H$ generates the Lie subalgebra of a maximal torus of $\Hg(\sX)_{\R}$ with respect to the identification above. Thus ${\rm Span}(H) =  {\rm Lie}(h_X(S^1))$. Since
$$[H,X-iY] = 2Y+2iX = 2i(X-iY) \ \ \mbox{and} \ \ [H,X+iY] = 2Y-2iX = -2i(X-iY),$$
the vector space ${\rm Span}_{\C}(X,Y)$ has a basis of eigenvectors with respect to $\ad(H)$. Therefore each $M\in {\rm Span}_{\R}(X,Y)\subset {\rm Lie}(\Hg(\sX))$ has the form
$$M = \left(\begin{array}{cccc}
0 & * & 0 & 0\\
* & 0 & * & 0 \\
0 & * & 0 & *\\
0 & 0 & * & 0 \end{array}\right)\in {\rm Lie}(C^{\der}(h_G(i)))+{\rm Lie}(CG),$$
where $CG$ was introduced in $\ref{lili}$. The explicit descriptions of $C^{\der}(h_G(i))$ and $CG$ in $\ref{lili}$ and the explicit description of $\SU(1,1)$ in Remark $\ref{2u}$, yield natural isomorphisms
$$C^{\der}(h_G(i))\cong CG\cong \SU(1,1).$$
Thus from the explicit description of $\fsu(1,1)$ in Remark $\ref{lisu}$, we conclude
$$M = \left(\begin{array}{cccc}
0 & x & 0 & 0\\
\bar x & 0 & y & 0 \\
0 & \bar y & 0 & x\\
0 & 0 & \bar x & 0 \end{array}\right)$$
for some $x,y\in \C$. One has an $M \in {\rm Lie}(\Hg(\sX))$ with $x \neq 0$. Otherwise one would have
$$N_1 = \left(\begin{array}{cccc}
0 & 0 & 0 & 0\\
0 & 0 & 1 & 0 \\
0 & 1 & 0 & 0\\
0 & 0 & 0 & 0 \end{array}\right), \ \ N_2 = \left(\begin{array}{cccc}
0 & 0 & 0 & 0\\
0 & 0 & i & 0 \\
0 & -i & 0 & 0\\
0 & 0 & 0 & 0 \end{array}\right) \in {\rm Lie}(\Hg(\sX)),$$
since $\dim {\rm Span}_{\R}(X,Y)=2$. This implies
$$[N_1,N_2] = \diag(0,-2i, 2i,0)\neq 0.$$
But this cannot hold true, since ${\rm Span}_{\R}(\diag(3i,i,-i,-3i))$ is the subvector space of diagonal matrices in ${\rm Lie}(\Hg(\sX)_{\R})$. Moreover one has
{\small $$\left[\left(\begin{array}{cccc}
0 & x & 0 & 0\\
\bar x & 0 & y & 0 \\
0 & \bar y & 0 & x\\
0 & 0 & \bar x & 0 \end{array}\right),
\left(\begin{array}{cccc}
0 & 0 & 0 & 0\\
0 & 0 & z & 0 \\
0 & \bar z & 0 & 0\\
0 & 0 & 0 & 0 \end{array}\right)\right] =
\left(\begin{array}{cccc}
0 & 0 & xz & 0\\
0 & y\bar z-z\bar y & 0 & -zx \\
-\bar x \bar z & 0 & \bar yz-\bar zy & 0\\
0 & \bar x\bar z & 0 & 0 \end{array}\right)\notin {\rm Lie}(\Hg(\sX))$$}
for $x,z \neq 0$. Hence we conclude:

\begin{proposition}
Assume that $\Hg^{\ad}(\sX)_{\R}\cong \PU(1,1)$ and $\Hg(\sX)$ has a discrete center. Then for some $x,y\in \C$ we have
$${\rm Lie}(\Hg(\sX))={\rm Span}_{\R}(\left(\begin{array}{cccc}
3i & 0 & 0 & 0\\
0 & i & 0 & 0 \\
0 & 0 & -i & 0\\
0 & 0 & 0 & -3i \end{array}\right),
\left(\begin{array}{cccc}
0 & 1 & 0 & 0\\
1 & 0 & x & 0 \\
0 & \bar x & 0 & 1\\
0 & 0 & 1 & 0 \end{array}\right),
\left(\begin{array}{cccc}
0 & i & 0 & 0\\
-i & 0 & y & 0 \\
0 & \bar y & 0 & i\\
0 & 0 & -i & 0 \end{array}\right)).$$
\end{proposition}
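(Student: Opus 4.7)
Under our hypotheses $\Hg(\sX)_{\R}$ is simple of real dimension $3$ with Lie algebra isomorphic to $\fsu(1,1)$. My plan is to combine the Hodge decomposition coming from the Shimura datum with the explicit eigenspace catalogue in $\ref{lili}$ in order to locate $\mathrm{Lie}(\Hg(\sX))_{\C}$ inside $\mathrm{Lie}(\Sp(H^3(X,\R),Q))_{\C}$, and then invoke the bracket structure of $\fsu(1,1)$ to pin down the precise matrix form.

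By Proposition $\ref{ShiDa}$ the representation $\ad\circ h_X$ endows $\mathrm{Lie}(\Hg(\sX))$ with a weight-zero Hodge structure of type $(1,-1),(0,0),(-1,1)$, and a dimension count forces exactly one $\C$-eigenline for each of the characters $1$, $z/\bar z$, $\bar z/z$ of $S^1$. The character-$1$ line must be $\mathrm{Lie}(h_X(S^1))_{\C}=\mathrm{Span}_{\C}(\diag(3i,i,-i,-3i))$, since this line is $\ad(h_X(S^1))$-fixed and $1$-dimensional. From the catalogue in $\ref{lili}$, the characters $z/\bar z$ and $\bar z/z$ occur in $\mathrm{Lie}(\Sp(H^3(X,\R),Q))_{\C}$ only inside $\mathrm{Lie}(C^{\der}(h_G(i)))_{\C}\oplus \mathrm{Lie}(CG)_{\C}$. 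Combining this with the explicit $\SU(1,1)$-shapes of $C^{\der}(h_G(i))$ and $CG$ from $\ref{lili}$ and Remark $\ref{lisu}$, every $M\in \mathrm{Lie}(\Hg(\sX))$ outside the toral line takes the form
$$M(a,b)=\begin{pmatrix} 0 & a & 0 & 0 \\ \bar a & 0 & b & 0 \\ 0 & \bar b & 0 & a \\ 0 & 0 & \bar a & 0 \end{pmatrix},\qquad a,b\in \C.$$
This yields a $2$-real-dimensional subspace $V\subset \mathrm{Lie}(\Hg(\sX))$ with an $\R$-linear projection $\pi:V\to \C$, $M(a,b)\mapsto a$.

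The remaining key step is to show that $\pi$ is surjective onto $\C$. First, $\pi\not\equiv 0$: otherwise $V$ would contain the two independent elements $N_1=M(0,1)$ and $N_2=M(0,i)$, and a direct computation gives $[N_1,N_2]=\diag(0,-2i,2i,0)$, a nonzero diagonal matrix linearly independent of $\diag(3i,i,-i,-3i)$, contradicting that the toral part of $\mathrm{Lie}(\Hg(\sX))$ is $1$-dimensional. Second, the image of $\pi$ cannot be only a real line in $\C$: otherwise $V$ would contain an element $M_0=M(a_0,b_0)$ with $a_0\neq 0$ together with a nonzero kernel element $N=M(0,z)$, but then the bracket $[M_0,N]$ has nonzero entries at the positions $(1,3)$, $(2,4)$, $(3,1)$, $(4,2)$, which are forbidden by the form of $M(a,b)$, so $[M_0,N]\notin \mathrm{Lie}(\Hg(\sX))$. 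Hence $\pi$ is surjective, and picking $M_1,M_2\in V$ with $\pi(M_1)=1$ and $\pi(M_2)=i$ and denoting their $b$-values by $x$ and $y$ yields, together with $\diag(3i,i,-i,-3i)$, the asserted basis of $\mathrm{Lie}(\Hg(\sX))$.

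I expect the main obstacle to be the second non-degeneracy, i.e. ruling out a $1$-dimensional image of $\pi$: merely exhibiting some $M_0$ with $a_0\neq 0$ is easy, but excluding the presence of a compatible kernel element requires the cross-bracket computation $[M_0,N]$, which is the only place in the argument where the matrix structure at the \emph{forbidden} positions $(1,3),(2,4),(3,1),(4,2)$ of $\mathrm{Lie}(\Sp(H^3(X,\R),Q))$ is genuinely used. Everything else reduces to bookkeeping with the Shimura-datum Hodge decomposition and the $\SU(1,1)$-structure of $C^{\der}(h_G(i))$ and $CG$.
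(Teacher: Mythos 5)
Your proof is correct and follows essentially the same route as the paper's: the Shimura-datum Hodge decomposition of type $(1,-1),(0,0),(-1,1)$ places ${\rm Lie}(\Hg(\sX))$ inside ${\rm Lie}(C^{\der}(h_G(i)))\oplus{\rm Lie}(CG)$, and the two bracket computations $[N_1,N_2]=\diag(0,-2i,2i,0)$ and $[M_0,M(0,z)]$ (with its forbidden entries at positions $(1,3),(2,4),(3,1),(4,2)$) are exactly the ones the paper uses to show the projection onto the first off-diagonal coordinate is an $\R$-linear isomorphism onto $\C$.
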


Now we determine the possible choices of $x,y\in \C$:
{\footnotesize
\begin{equation}\label{gl}\left[\left(\begin{array}{cccc}
0 & i & 0 & 0\\
-i & 0 & y & 0 \\
0 & \bar y & 0 & i\\
0 & 0 & -i & 0 \end{array}\right),
\left(\begin{array}{cccc}
0 & 1 & 0 & 0\\
1 & 0 & x & 0 \\
0 & \bar x & 0 & 1\\
0 & 0 & 1 & 0 \end{array}\right)\right]=\left(\begin{array}{cccc}
2i & 0 & ix-y & 0\\
0 & -2i +\bar xy-x\bar y& 0 & y-ix \\
\bar y+i\bar x & 0 & 2i+x\bar y-\bar xy & 0\\
0 & -i\bar x-\bar y & 0 & -2i \end{array}\right)\end{equation}}
Hence one obtains
$$ix-y=0 \Leftrightarrow ix = y\Leftrightarrow \Im(y)=\Re(x), \ \ \Re(y) = -\Im(x).$$
Thus the matrix on the right hand side of $\eqref{gl}$ is contained in ${\rm Span}(\diag(3i,i,-i,-3i))$ and for the second entry in the second column we obtain
$$-2i +\bar xy-x\bar y=\frac{2}{3}i \Rightarrow \bar xy-x\bar y=\frac{8}{3}i.$$
We have independent of the choice of $x$ and $y$ that
$$\Re(\bar xy-x\bar y) = \Re(\bar xy-\overline{\bar x y}) = 0$$
The previous equations imply:
$$\frac{8}{3} = \Im(\bar xy-x\bar y) = -\Im(x)\Re(y)+\Re(x)\Im(y)+\Re(x)\Im(y)-\Im(x)\Re(y)$$
$$=2\Re(x)^2+2\Im(x)^2 = 2|x|^2$$
By using $ix = y$, we compute
$$\left[\left(\begin{array}{cccc}
0 & i & 0 & 0\\
-i & 0 & y & 0 \\
0 & \bar y & 0 & i\\
0 & 0 & -i & 0 \end{array}\right),
\left(\begin{array}{cccc}
3i & 0 & 0 & 0\\
0 & i & 0 & 0 \\
0 & 0 & -1 & 0\\
0 & 0 & 0 & -3i \end{array}\right)\right]
=\left(\begin{array}{cccc}
0 & 2 & 0 & 0\\
2 & 0 & 2x & 0 \\
0 & 2\bar x & 0 & 2\\
0 & 0 & 2 & 0 \end{array}\right)$$
$$\mbox{and} \ \
\left[\left(\begin{array}{cccc}
3i & 0 & 0 & 0\\
0 & i & 0 & 0 \\
0 & 0 & -1 & 0\\
0 & 0 & 0 & -3i \end{array}\right)
,\left(\begin{array}{cccc}
0 & 1 & 0 & 0\\
1 & 0 & x & 0 \\
0 & \bar x & 0 & 1\\
0 & 0 & 1 & 0 \end{array}\right)\right]
=\left(\begin{array}{cccc}
0 & 2i & 0 & 0\\
-2i & 0 & 2y & 0 \\
0 & 2\bar y & 0 & 2i\\
0 & 0 & -2i & 0 \end{array}\right).$$
Note that each connected simple Lie group of $\GL_n(\R)$ is also the connected component of identity of the Lie group of real valued points of an $\R$-algebraic group (this follows from \cite{Sat}, {\bf I}. Proposition 3.6). Thus we conclude:

\begin{proposition}
For each $x\in \C$ with $|x|=\frac{2}{\sqrt{3}}$ there is a simple $\R$-algebraic subgroup $$G_{x}\subset \Sp(H^3(X,\R),Q)$$ of dimension 3 such that $h(S^1)\subset G_{x}$.
\end{proposition}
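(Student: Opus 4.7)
The plan is to realize $G_x$ as the integration of a concrete $3$-dimensional Lie subalgebra $\fg_x \subset \fsp(H^3(X,\R),Q)$ and then algebraize via Satake. Set $y = ix$ and let $\fg_x$ be the real span of the three matrices $H_x := \diag(3i, i, -i, -3i)$, $X_x$, $Y_x$ displayed immediately above the statement; these are real matrices in the trace-map convention fixed at the start of the paper, and they lie in $\fsp(H^3(X,\R),Q)$: indeed $H_x = dh_X(1)$ is tangent to $h_X(S^1)$ by construction, while $X_x$ and $Y_x$ are visibly sums of elements in the complexified Lie algebras of the subgroups $C^{\der}(h_G(i))$ and $CG$ of $\Sp(H^3(X,\R),Q)$ introduced in $\ref{lili}$, so the condition $M^tQ+QM=0$ is inherited.

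The first technical step is to verify that $\fg_x$ is closed under brackets exactly under the imposed normalization $|x|=\frac{2}{\sqrt{3}}$. The three bracket computations are already laid out immediately above: with $y=ix$ one has $[H_x, X_x] = 2 Y_x$ and $[H_x, Y_x] = -2 X_x$, and the off-diagonal entries of $[Y_x, X_x]$ vanish automatically from the same calculation. Its diagonal entries reduce to a scalar multiple of $H_x$ precisely when $\bar x y - x \bar y = \frac{8}{3} i$, and given $y = ix$ this is equivalent to $2|x|^2 = \frac{8}{3}$, i.e.\ $|x| = \frac{2}{\sqrt{3}}$. Under this condition one reads off $[Y_x, X_x] = \frac{2}{3} H_x$, so the three brackets close in $\fg_x$.

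Next I would identify $\fg_x$ with $\fsu(1,1)$. By the relations just recorded, $\fg_x$ is three-dimensional with $[H_x, X_x] = 2 Y_x$, $[H_x, Y_x] = -2 X_x$, $[X_x, Y_x] = -\frac{2}{3} H_x$; the negative sign of the scalar in the last bracket distinguishes $\fsu(1,1)$ from $\fsu(2)$ and, after an evident rescaling of generators, matches the presentation of $\fsu(1,1)$ recorded in Remark $\ref{lisu}$. Hence $\fg_x$ is simple of dimension $3$.

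The final step is algebraization. By \cite{Sat}, {\bf I}.\ Proposition $3.6$ (cited just above in the same form), the connected simple Lie subgroup of $\Sp(H^3(X,\R),Q)(\R)$ integrating $\fg_x$ is the identity component of the $\R$-points of an $\R$-algebraic simple subgroup $G_x \subset \Sp(H^3(X,\R),Q)$ of dimension $3$. Since ${\rm Lie}(h_X(S^1)) = \R \cdot H_x$ is contained in $\fg_x$ and $h_X(S^1)$ is connected, the inclusion $h(S^1) \subset G_x$ is then automatic. The only genuinely calculational step is the evaluation of the diagonal of $[Y_x, X_x]$ under $|x|^2 = \frac{4}{3}$; everything else — symplecticity of the generators, the isomorphism $\fg_x \cong \fsu(1,1)$, and the passage from Lie to $\R$-algebraic group — is either explicit in the preceding pages or entirely standard, so I expect no serious obstacle.
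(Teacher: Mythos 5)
Your proposal is correct and follows essentially the same route as the paper: the proposition there is deduced from exactly the bracket computations displayed above it (which force $y=ix$ and $2|x|^2=\tfrac{8}{3}$, yielding a $3$-dimensional Lie algebra isomorphic to $\fsu(1,1)$ inside $\fsp(H^3(X,\R),Q)$ containing ${\rm Lie}(h_X(S^1))$), followed by the appeal to \cite{Sat}, {\bf I}.\ Proposition $3.6$ to pass from the connected simple Lie subgroup to an $\R$-algebraic group. Your additional checks (symplecticity and reality of the generators, the sign distinguishing $\fsu(1,1)$ from $\fsu(2)$) are correct and only make explicit what the paper leaves implicit.
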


\begin{lemma} \label{maxmon}
Each unipotent matrix in $G_{x}$ has a Jordan block of length $\geq 3$.
\end{lemma}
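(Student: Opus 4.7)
The plan is to recognize the action of $G_x$ on $H^3(X,\C)$ as the (up to isomorphism unique) $4$-dimensional irreducible representation of $\fsl_2(\C)$, and then to apply the standard Jordan structure of nilpotent elements in that representation.

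Since $G_x$ is connected, simple, and of dimension $3$, its complexified Lie algebra is isomorphic to $\fsl_2(\C)$, so the inclusion $G_x \hookrightarrow \Sp(H^3(X,\R),Q)$ equips $H^3(X,\C)$ with the structure of a $4$-dimensional representation $\rho$ of $\fsl_2(\C)$. First I would read off the weights: the element $H_0 = \diag(3i,i,-i,-3i)$ lies in $\fg_x$, spans a Cartan subalgebra, and acts on $H^3(X,\C)$ with the four distinct eigenvalues $\pm 3i,\pm i$. After normalizing the Cartan generator by a factor of $i$, these become the weights $\pm 3, \pm 1$, each with multiplicity one. Going through the list of isomorphism classes of $4$-dimensional $\fsl_2(\C)$-modules, namely $V(3)$, $V(2)\oplus V(0)$, $V(1)^{\oplus 2}$, $V(1)\oplus V(0)^{\oplus 2}$, $V(0)^{\oplus 4}$ (where $V(n)$ denotes the irreducible of highest weight $n$), only $V(3)$ realizes this weight pattern. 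Hence $\rho \cong V(3)$ is irreducible.

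In the irreducible $(n+1)$-dimensional representation of $\fsl_2(\C)$, every nonzero nilpotent element acts as a cyclic nilpotent operator, i.e.\ with a single Jordan block of size $n+1$. For $n=3$ this means every nonzero nilpotent $N \in \fg_x$ satisfies $N^3 \neq 0$ and $N^4 = 0$ on $H^3(X,\C)$. Since $G_x$ is a Zariski-closed connected subgroup of $\Sp(H^3(X,\R),Q)$, the exponential restricts to a bijection between nilpotent elements of $\fg_x$ and unipotent elements of $G_x(\R)$: for a unipotent $u \in G_x(\R)$ the real $1$-parameter subgroup $t \mapsto \exp(t\log u)$ lies in $G_x$, so $\log u \in \fg_x$, and $u-I$ has the same Jordan type as $\log u$. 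Hence every nontrivial unipotent $u \in G_x(\R)$ has a single Jordan block of size $4$, in particular one of length $\geq 3$.

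The only step requiring real care is the weight-counting argument that excludes reducibility of $\rho$; after that the conclusion is formal from standard $\fsl_2$-representation theory together with the exponential-logarithm correspondence between unipotents and nilpotents in a connected algebraic group.
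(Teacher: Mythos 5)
Your proof is correct, and it takes a genuinely different route from the paper's. The paper argues by direct computation: it writes the general element $M=aH_0+cX+bY$ of ${\rm Lie}(G_x)$ as an explicit $4\times 4$ matrix, notes that a nontrivial unipotent with all Jordan blocks of length $\leq 2$ would produce a nonzero $M$ with $M^2=0$, reads off from the $(1,2)$-entry $4ai(c+bi)$ of $M^2$ that this forces $a=0$ or $c+bi=0$, and then checks that in either case $M^2=0$ only for $M=0$. You instead identify the module structure: since $\fg_{x,\C}\cong\fsl_2(\C)$ and the Cartan element $\diag(3i,i,-i,-3i)$ acts on $H^3(X,\C)$ with four distinct eigenvalues of multiplicity one, the only possible decomposition of a four-dimensional $\fsl_2(\C)$-module with that eigenvalue pattern is the irreducible one (${\rm Sym}^3$ of the standard representation), whence every nonzero nilpotent acts as a single Jordan block of size $4$; the $\log$/$\exp$ correspondence between unipotents of $G_x(\R)$ and nilpotents of $\fg_x$, which you invoke correctly, finishes the argument. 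Your approach buys a stronger conclusion ($N^3\neq 0$, i.e.\ every nontrivial unipotent in $G_x$ is regular) with essentially no computation, while the paper's is elementary and self-contained but only directly excludes $N^2=0$. One phrase in your write-up is slightly loose: ``normalizing the Cartan generator by a factor of $i$'' presumes $\diag(3,1,-1,-3)$ is the standard coroot rather than some other nonzero multiple of it; but this does not matter, since for every reducible four-dimensional module the eigenvalues of any Cartan element either repeat or fail to have ratios $3:1:-1:-3$, so distinctness of the four eigenvalues already forces irreducibility.
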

\begin{proof}
A unipotent matrix in $G_{x}$, whose Jordan blocks have the maximal length 2, would correspond to a matrix $M\in{\rm Lie}(G_x)$, whose square is zero. One has that
$$(m_{i,j})=M^2=\left(\begin{array}{cccc}
a3i & c+bi & 0 & 0\\
c-bi & ai & cx+by & 0 \\
0 & c\bar x+ b\bar y & -ai & c+bi\\
0 & 0 & c-bi & -3ai \end{array}\right)^2 = 0$$
with $a,b,c\in \R$ is satisfied, only if
$$m_{1,2}=4ai(c+bi) = 0.$$
Hence $a = 0$ or $c+bi=0$. The reader checks easily that $M^2$ cannot be zero in either case with the exception given by $M=0$.
\end{proof}

\begin{example}
In \cite{DM2} there is a list of explicitly computed examples of variations of Hodge structures of families $\sY\to \bP^1\setminus\{0,1,\infty\}$ of Calabi-Yau 3-manifolds with 1-dimensional complex moduli. Note that each of these variations has a monodromy group containing a unipotent matrix, which has only Jordan blocks of length $\leq 2$. Due to the fact that $\Mon^0(\sY) \subseteq\Hg(\sY)$, we conclude from Lemma $\ref{maxmon}$ that there is no $x$ with $|x|=\frac{2}{\sqrt{3}}$ such that $\Hg(\sY)_{\R}\cong G_x$. Moreover each example in \cite{DM2} has maximally unipotent monodromy. Thus we are not in the case $\Hg(\sY)_{\R}= C(h_G(i))$ for these examples. Therefore the examples of \cite{DM2} have a generic Hodge group given by $\Sp(H^3(Y,\Q),Q)$, where $Y$ denotes an arbitrary fiber of the respective family $\sY$.
\end{example}

It would be very nice to find an example for the third case $\Hg(\sX)_{\R}=G_x$. At present there is no example of a family of Calabi-Yau manifolds with 1-dimensional complex moduli known to the author, which satisfies the third case. Nevertheless one finds a Calabi-Yau like variation of Hodge structures of third case, which arises in a natural way over a curve as we will see now (for the definition of a Calabi-Yau like $VHS$ of third case see $\ref{cyarhg}$). For this example one uses the construction of C. Borcea \cite{Bc}:

\begin{construction} \label{bobo}
Let $E_1,E_2,E_3$ be elliptic curves with involutions $\iota_1,\iota_2,\iota_3$ such that $E_j/\iota_j\cong \bP^1$. The singular variety
$$E_1 \times E_2 \times E_3/\langle(\iota_1,\iota_2),(\iota_2,\iota_3)\rangle$$
yields a Calabi-Yau 3-manifold $C$ by blowing up the singularities. The isomorphism class of $C$ depends on the choice of the sequence of blowing ups. Nevertheless the Hodge structure on $H^3(C,\Z)$ does not depend on the choice of this sequence and is given by the tensor product
$$H^3(C,\C) = H^1(E_1,\C)\otimes H^1(E_2,\C)\otimes H^1(E_3,\C)$$
of the respective Hodge structures.

Let $f_1:\sE \to \A^1\setminus\{0,1\}$ denote the family of elliptic curves given by
$$\bP^2\supset V(y^2z= x(x-z)(x-\lambda z)) \to \lambda\in \A^1\setminus\{0,1\}.$$
By using the involution of $\sE$ over $\A^1\setminus\{0,1\}$ and three copies of $\sE \to \A^1\setminus\{0,1\}$, one can give a relative version of the previous construction. Let $f_3:\sC\to (\A^1\setminus\{0,1\})^3$ denote a family obtained by this relative version of C. Borcea's construction.
\end{construction}

Recall that a Calabi-Yau 3-manifold $X$ has complex multiplication ($CM$), if the Hodge group $\Hg(H^3(X,\Q),h)$ is a torus. For $\Hg(\sX)_{\R}=C(h_G(i))$ the pair is a Shimura datum (see Proposition $\ref{ShiDa}$). Thus we have a dense set of $CM$ fibers.\footnote{The proof uses arguments, which occur already in \cite{JCR3}, Section 3. One has only to replace $C(\alpha)$ by $\Hg(\sX)$ and use the same arguments, which occur after the proof of \cite{JCR3}, Lemma $3.4$.} But in this case one cannot have maximally unipotent monodromy (see Remark $\ref{vu}$). Moreover the associated Hermitian symmetric domain has a dimension larger than the dimension of the basis for $\Hg(\sX)=\Sp(H^3(X,\Q),Q)$. For this case one conjectures that only finitely many $CM$ fibers occur. Hence for families of Calabi-Yau 3-manifolds with onedimensional complex moduli it is feasible to conjecture that the existence of infinitely many nonisomorphic $CM$ fibers and maximally unipotent monodromy exclude each other. This does not hold true for Calabi-Yau 3-manifolds with higher dimensional complex moduli, since the family $f_3:\sC\to (\A^1\setminus\{0,1\})^3$ has maximally unipotent monodromy and a dense set of $CM$ fibers:

\begin{remark}
Let $\Delta^*$ denote the punctured disc. One finds a neighbourhood $U$ of the point $(0,0,0)\in \A^3$ such that $\sC$ is locally defined over $(\Delta^*)^3\subset U$. Let $D_1,D_2,D_3$ denote the irreducible components of the complement of $(\Delta^*)^3\subset U$ and $\gamma_i$ denote a closed path given by a loop around $D_i$. The family $f_1:\sE \to \A^1\setminus\{0,1\}$ of elliptic curves has unipotent monodromy around $0$ with
$$\rho(\gamma) = \left(\begin{array}{cc}
1 & 2\\
0 & 1
\end{array} \right)$$
with respect to a basis $\{a,b\}$ (this follows from the computations in \cite{JCR}, Section $3.3$). Thus one computes easily that
$$N_{r,s,t} = r\log \rho(\gamma_1)+s\log \rho(\gamma_2)+t\log \rho(\gamma_{3}) = \left(\begin{array}{cccccccc}
0 & 2t & 2s & 0 & 2r & 0 & 0 & 0\\
0 & 0 & 0 & 2s & 0 & 2r & 0 & 0\\
0 & 0 & 0 & 2t & 0 & 0 & 2r & 0\\
0 & 0 & 0 & 0 & 0 & 0 & 0 & 2r\\
0 & 0 & 0 & 0 & 0 & 2t & 2s & 0\\
0 & 0 & 0 & 0 & 0 & 0 & 0 & 2s\\
0 & 0 & 0 & 0 & 0 & 0 & 0 & 2t\\
0 & 0 & 0 & 0 & 0 & 0 & 0 & 0
\end{array} \right)$$
with respect to the basis
$$\sB = \{a_1\otimes a_2\otimes a_3, \ \ a_1\otimes a_2\otimes b_3, \ \ a_1\otimes b_2\otimes a_3, \ \ a_1\otimes b_2\otimes b_3,$$
$$b_1\otimes a_2\otimes a_3, \ \ b_1\otimes a_2\otimes b_3, \ \ b_1\otimes b_2\otimes a_3, \ \ b_1\otimes b_2\otimes b_3\}.$$
By analogue computations, one gets the same result for all maximal-depth normal crossing points of $(\A^1\setminus\{0,1\})^3$. Thus the family $\sC \to (\A^1\setminus\{0,1\})^3$ has maximally unipotent monodromy around each maximal-depth normal crossing point (for the definition of maximally unipotent monodromy see \cite{Mor}). Moreover $C$ has $CM$, if and only if $E_1,E_2,E_3$ have $CM$ as complex tori (see \cite{Bc}, Proposition $3.1$).
Since it is a well-known fact that $\sE$ has a dense set of fibers $\sE_{\lambda}$ such that $\sE_{\lambda}$ has $CM$, one concludes that $\sC$ has a dense set of $CM$ fibers.
\end{remark}

Now we come to the Calabi-Yau like $VHS$ of third type. Let $\Delta \subset (\A^1\setminus\{0,1\})^3$ be the diagonal obtained from the closed embedding
$$\A^1\setminus\{0,1\} \hookrightarrow (\A^1\setminus\{0,1\})^3 \ \ \mbox{via} \ \ x \to (x,x,x).$$
As we will see the rational $VHS$ of the restricted family $\sC_{\Delta} \to \Delta$ contains a sub-$VHS$ of third type. Let
$$\sH^1 = R^1(f_1)_*\Q \otimes \sO_{\Delta} \ \ \mbox{and} \ \ \sH^3 = R^3(f_3|_{\sC_{\Delta}})_*\Q \otimes \sO_{\Delta}.$$

\begin{pkt} \label{4.8}
One has that $\sH^3 =(\sH^1)^{\otimes 3}$ (see also \cite{VZ}, Remark $7.4$) and $F^3(\sH^3)$ is contained in the symmetric product ${\rm Sym}^3(\sH^1)$. Hence
$$H^{3,0}(\sC_{(\lambda,\lambda,\lambda)}), H^{0,3}(\sC_{(\lambda,\lambda,\lambda)}) \subset {\rm Sym}^3(H^1(\sE_{\lambda},\C))$$
for each $(\lambda,\lambda,\lambda) \in \Delta$. Since $F^3(\sH^3)\subset {\rm Sym}^3(\sH^1)$, one obtains $\nabla_t\omega(b) \in {\rm Sym}^3(H^1(\sE_{\lambda},\Q))$ for each section $\omega \in F^3(\sH^3_{\Delta})(U)$ and $t \in T_b\Delta$. By Bryant-Griffiths \cite{BG}, one has that $F^2(\sH^3)$ is generated by the sections of $F^3(\sH^3)$ and their differentials. Therefore one concludes that  $F^2(\sH^3) \cap {\rm Sym}^3(\sH^1)$ is of rank 2 and we have a polarized rational variation $\sV$ of Hodge structures of type
$$(3,0), \ \ (2,1), \ \ (1,2), \ \ (0,3)$$
with the underlying local system ${\rm Sym}^3(R^1(f_1)_*\Q)$ of rank 4. This $VHS$ satisfies that $F^2(\sV)$ is generated by the sections of $F^3(\sV)$ and their differentials along $\Delta$, and that $F^1(\sV)= F^3(\sV)^{\bot}$ with respect to the polarization. By \cite{BG}, these two properties characterize the $VHS$ of a family of Calabi-Yau 3-manifolds. In this sense $\sV$ is a Calabi-Yau like sub-$VHS$ of the rational $VHS$ of $\sC_{\Delta}$.
\end{pkt}

\begin{pkt} \label{cyarhg}
Let $M$ be connected complex manifold and $\sW \to M$ be a Calabi-Yau like $VHS$ with
$$h^{3,0}(\sW_m) = h^{2,1}(\sW_m) = h^{1,2}(\sW_m) =h^{0,3}(\sW_m) = 1$$
for each $m \in M$ in the sense of $\ref{4.8}$. We say that $\sW$ is of third type, if the center of its generic Hodge group is discrete and the associated Hermitian symmetric domain is $\B_1$. Note that all previous arguments are also valid for a Calabi-Yau like $VHS$, which is not necessarily the $VHS$ of a family of Calabi-Yau 3-manifolds. Thus there is an $x \in \C$ with $|x| = \frac{2}{\sqrt{3}}$ such that
$\Hg(\sW)_{\R} = G_x$ for a Calabi-Yau like $VHS$ of third type.
\end{pkt}

Let $E$ be an elliptic curve and $M \in \GL(H^1(E,\Q))$ be given by
$$M = \left(\begin{array}{cc}
a & b\\
c & d
\end{array} \right)\in \GL(H^1(E,\Q))$$
with respect to a basis $\{e_1,e_2\}$ of $H^1(E,\Q)$. Moreover let
$$Kr^3(M)=M\otimes M\otimes M$$
denote the third Kronecker power of $M$. One can easily check that
$$Kr^3(M)({\rm Sym}^3(H^1(E,\Q))) = {\rm Sym}^3(H^1(E,\Q))$$
for each $M \in \GL(H^1(E,\Q))$. Moreover one can easily compute that $Kr(M)$ acts on ${\rm Sym}^3(H^1(E,\Q))$ by the matrix
\begin{equation} \label{hihihi} r(M) =\left(\begin{array}{cccc}
a^3 & 3a^2b & 3ab^2 & b^3 \\
a^2c & a^2d+2abc & 2abd+b^2c & b^2d \\
ac^2 & acd+bc^2 & ad^2+2bcd & bd^2 \\
c^3 & 3c^2d & 3cd^2 & d^3
\end{array} \right)
\end{equation}
with respect to the basis
$$\{e_1\otimes e_1\otimes e_1, \ \ e_1\otimes e_1\otimes e_2 \ \ + \ \ e_1\otimes e_2\otimes e_1 \ \  + \ \ e_2\otimes e_1\otimes e_1,$$
$$e_1\otimes e_2\otimes e_2 \ \ + \ \ e_2\otimes e_1\otimes e_2 \ \ + \ \ e_1\otimes e_2\otimes e_2, \ \ e_2\otimes e_2\otimes e_2\}.$$

\begin{lemma}
One has the homomorphisms
$$r:\GL(H^1(E,\Q)) \to \GL({\rm Sym}^3(H^1(E,\Q)))$$
and
$$r|_{\SL(H^1(E,\Q))}:\SL(H^1(E,\Q)) \to \SL({\rm Sym}^3(H^1(E,\Q)))$$
of $\Q$-algebraic groups.
\end{lemma}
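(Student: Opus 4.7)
My plan is to obtain both statements by recognizing $r$ as a restriction of the third tensor power representation to an invariant subspace, and then read off algebraicity and the determinant from the explicit formula $\eqref{hihihi}$.

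First, I would observe that for any $M \in \GL(H^1(E,\Q))$ the operator $Kr^3(M) = M \otimes M \otimes M$ acts on $H^1(E,\Q)^{\otimes 3}$ as a group homomorphism (indeed, $Kr^3(MN) = (MN)^{\otimes 3} = M^{\otimes 3} N^{\otimes 3} = Kr^3(M)Kr^3(N)$). The text has already noted that $Kr^3(M)$ preserves the subspace ${\rm Sym}^3(H^1(E,\Q))$, so the map $r$ is precisely the restriction of $Kr^3$ to this invariant subspace, and is therefore a group homomorphism.

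Next, for the algebraic structure, the explicit matrix formula $\eqref{hihihi}$ shows that each entry of $r(M)$ is a polynomial (with integer coefficients) in the entries $a,b,c,d$ of $M$. Hence $r$ is a morphism of affine $\Q$-varieties from $\GL(H^1(E,\Q))$ to the matrix algebra on ${\rm Sym}^3(H^1(E,\Q))$, and since its image lands in the open subset of invertible matrices (the image of a group homomorphism into $\GL$ automatically consists of invertible elements), $r$ is a $\Q$-algebraic group homomorphism into $\GL({\rm Sym}^3(H^1(E,\Q)))$.

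Finally, to verify that $r$ restricts to a map $\SL(H^1(E,\Q)) \to \SL({\rm Sym}^3(H^1(E,\Q)))$, I would use the fact that $\det \circ r : \GL(H^1(E,\Q)) \to \G_m$ is a character of $\GL_2$, hence of the form $M \mapsto \det(M)^k$ for some integer $k$. The exponent $k$ can be computed on a diagonal matrix $M = \diag(\lambda_1, \lambda_2)$: the eigenvalues of $r(M)$ on ${\rm Sym}^3(H^1(E,\Q))$ are $\lambda_1^3, \lambda_1^2\lambda_2, \lambda_1\lambda_2^2, \lambda_2^3$, whose product is $(\lambda_1\lambda_2)^6 = \det(M)^6$, so $k=6$. (Equivalently, one sees the same value directly from $\eqref{hihihi}$ with $b=c=0$.) Consequently $\det(r(M)) = \det(M)^6$, and in particular $\det(r(M))=1$ whenever $\det(M)=1$, which gives the second homomorphism.

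There is no real obstacle here; the only thing to be a little careful about is to justify that $r$ lands inside $\GL$ (not merely in the endomorphism algebra), which is automatic once $r$ is seen as a group homomorphism via the $Kr^3$-invariance of ${\rm Sym}^3(H^1(E,\Q))$.
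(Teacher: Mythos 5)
Your proof is correct and follows essentially the same route as the paper: regularity is read off from the explicit formula $\eqref{hihihi}$, multiplicativity is inherited from $Kr^3$, and the identity $\det(r(M))=\det(M)^6$ yields the restriction to $\SL$. The only cosmetic difference is that you compute the determinant exponent via the character argument on diagonal matrices, whereas the paper evaluates $\det(r(J_M))$ on the Jordan form of $M$ --- the same calculation in substance.
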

\begin{proof}
From $\eqref{hihihi}$ one concludes that $r$ is an regular map.
Note that the determinant of $r(M)$ is given by $\det^{6}(M)$ for each $M \in \GL(H^1(E,\Q))$. This follows by computing $\det(r(J_M))$, where $J_M$ denotes the associated Jordan form of $M$. Since one can easily check that $Kr^3$ respects the matrix multiplication, one concludes that the same holds true for $r$. Thus we obtain the homomorphisms of $\Q$-algebraic groups as claimed.
\end{proof}

Let $G$ denote the Zariski closure of $r(\SL(H^1(E,\Q)))$ in $\GL({\rm Sym}^3(H^1(E,\Q)))$. It is a well-known fact that $G$ is an algebraic group.

\begin{lemma} \label{hogi}
The group $G$ has at most dimension 3.
\end{lemma}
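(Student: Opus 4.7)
The plan is to deduce the bound on $\dim G$ directly from the fact that $G$ is obtained as the Zariski closure of the image of $\SL(H^1(E,\Q))$ under the algebraic homomorphism $r$ constructed in the preceding lemma. Since $\SL(H^1(E,\Q))\cong\SL_2$ is a $\Q$-algebraic group of dimension $3$, its image under any morphism of $\Q$-algebraic groups can have dimension at most $3$, and this bound is inherited by the Zariski closure.

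More precisely, I would proceed as follows. First I would note that by the preceding lemma, $r\colon \SL(H^1(E,\Q))\to \SL({\rm Sym}^3(H^1(E,\Q)))$ is a homomorphism of $\Q$-algebraic groups, so the image $r(\SL(H^1(E,\Q)))$ is a constructible subset of $\GL({\rm Sym}^3(H^1(E,\Q)))$; being the image of an irreducible group under a regular map, it is itself irreducible. Next, I would recall the general fact that for any morphism $f\colon X\to Y$ of algebraic varieties the dimension of $f(X)$ (and hence of its Zariski closure) is bounded above by $\dim X$. Applying this to $r$ with $X=\SL(H^1(E,\Q))$, which has dimension $3$, yields
\[
\dim G \;=\; \dim\overline{r(\SL(H^1(E,\Q)))} \;\leq\; \dim \SL(H^1(E,\Q)) \;=\; 3.
\]
This is the desired inequality.

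No serious obstacle is expected: the argument is essentially the standard dimension-count for images of algebraic group homomorphisms, together with the identification $\dim \SL_2=3$. The only mild subtlety is to make sure one is applying the statement about images in the category of $\Q$-algebraic groups (or equivalently after passing to $\overline{\Q}$-points), which is justified by the fact established in the previous lemma that $r$ is a morphism of $\Q$-algebraic groups and $\SL(H^1(E,\Q))$ is geometrically irreducible of dimension $3$.
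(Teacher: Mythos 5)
Your proof is correct, and it takes a genuinely different (and cleaner) route than the paper's. You invoke the standard fact that for a morphism of varieties the closure of the image has dimension at most that of the source, applied to the homomorphism $r|_{\SL(H^1(E,\Q))}$ of $\Q$-algebraic groups established in the preceding lemma; since $\SL_2$ has dimension $3$, the bound $\dim G\le 3$ is immediate. (In fact, since the image of a homomorphism of affine algebraic groups is Zariski closed and the symmetric cube representation of $\SL_2$ has finite kernel, one even gets $\dim G=3$ exactly, but only the upper bound is needed.) The paper instead argues by exhibiting explicit polynomial relations among the entries $m_{i,j}$ of $r(M)$ — expressing $m_{2,2}^3,m_{2,3}^3,m_{3,2}^3,m_{3,3}^3$ in terms of the border entries and the cubes of the border entries in terms of $a^3,b^3,c^3,d^3$ — to bound $\dim\overline{r(\GL(H^1(E,\Q)))}$ by $4$, and then cuts down by one using the determinant identity $\det r(M)=\det^6(M)$. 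Your dimension count is shorter and avoids the paper's somewhat informal step of asserting that ``one finds enough equations''; the paper's computation has the incidental benefit of producing explicit equations satisfied by $G$, but these are not used elsewhere. The only point to be careful about, which you address, is that $G$ is defined as the closure of the image of the rational points; since $\SL_2(\Q)$ is Zariski dense in $\SL_2$, this closure is contained in (indeed equals) the closure of the image of the full algebraic group, so the bound is inherited.
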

\begin{proof}
Let
$$M = \left(\begin{array}{cc}
a & b\\
c & d
\end{array} \right)\in \GL(H^1(E,\Q)).$$
For $(m_{i,j})=r(M)$ one has that
$$m_{2,2}^3 = (a(ad+2bc))^3 = a^3(a^3d^3 + 6a^2bcd^2+ 12ab^2c^2d+ 8b^3c^3)$$
$$= m_{1,1}(m_{1,1}m_{4,4}+\frac{2}{3}m_{1,2}m_{4,3}+\frac{4}{3}m_{1,3}m_{4,2}+ 8m_{1,4}m_{4,1})$$
(follows from $\eqref{hihihi}$). In an analogue way one can express $m_{2,3}^3,m_{3,2}^3,m_{3,3}^3$ by equations with entries $m_{i,j}$ such that $\{i,j\}\cap \{1,4\} \neq \emptyset$.
Note that for all other entries $m_{i,j}$ of $r(M)$ such that $\{i,j\}\cap \{1,4\} \neq \emptyset$ the power $m_{i,j}^3$ satisfies some equation in terms of
$$m_{1,1}= a^3,m_{1,4}= b^3,m_{4,1}= c^3,m_{4,4}= d^3$$
(compare $\eqref{hihihi}$). Due to these facts, one finds enough equations such that the Zariski closure $\overline{r(\GL(H^1(E,\Q))}$ of the group $r(\GL(H^1(E,\Q))(\Q)$ has at most dimension 4. Since $\det (r(M))=\det^{6}(M)$, the set on the right hand site of the inequality
$$G^0\subseteq (\overline{r(\GL(H^1(E,\Q))}\cap \SL({\rm Sym}^3H^1(E,\Q)))^0$$
is a proper Zariski closed subset of $\overline{r(\GL(H^1(E,\Q))}^0$. Thus one concludes that
$$\dim G \leq3.$$
\end{proof}

Note that the Hodge structure of $\sC_{(\lambda,\lambda,\lambda)}$ is given by the tensor product $H^1(\sE_{\lambda},\Q)^{\otimes 3}$. Thus the associated representation of $S^1$ is given by $Kr^3\circ h_{\lambda}$, where $h_{\lambda}$ denotes the Hodge structure of $\sE_{\lambda}$. Therefore the sub-Hodge structure on ${\rm Sym}^3(H^1(\sE_{\lambda},\Q))$ is given by
$$h' =r \circ h_{\lambda}.$$
One concludes $h'(S^1) \subset G_{\R}$, since $h_{\lambda}(S^1) \subset \SL(H^1(E,\R))$ and $r$ yields a homomorphism $\SL(H^1(E,\R)) \to G_{\R}$.

\begin{proposition}
The variation $\sV$ of Hodge structures is of third type.
\end{proposition}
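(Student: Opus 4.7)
The plan is to identify the generic Hodge group $\Hg(\sV)$ with the group $G = r(\SL(H^1(E,\Q)))$ constructed above, which is a $\Q$-form of $\SL_2$ embedded via the symmetric-cube representation. For each $\lambda \in \Delta$ the induced Hodge structure on ${\rm Sym}^3(H^1(\sE_\lambda,\Q))$ is $h'_\lambda = r \circ h_\lambda$, where $h_\lambda$ is the weight-$1$ Hodge structure of the elliptic curve $\sE_\lambda$. Hence $h'_\lambda(S^1) \subseteq r(\SL(H^1(E,\R))) \subseteq G(\R)$ at every fibre, and taking the maximum over $\lambda$ yields the easy inclusion $\Hg(\sV) \subseteq G$.

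For the reverse inclusion I would pass to a very general fibre. The Legendre family $\sE$ is non-isotrivial, so a generic $\sE_\lambda$ is non-CM and satisfies $\Hg(h_\lambda) = \SL(H^1(\sE_\lambda,\Q)) \cong \SL_2$. A minimality argument then identifies $\Hg(h'_\lambda)$ with $r(\Hg(h_\lambda))$: any $\Q$-algebraic subgroup of $\GL({\rm Sym}^3 H^1)$ whose real points contain $h'_\lambda(S^1) = r(h_\lambda(S^1))$ pulls back under $r$ to a $\Q$-algebraic subgroup whose real points contain $h_\lambda(S^1)$, hence contains $\Hg(h_\lambda) = \SL_2$. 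Consequently $\Hg(h'_\lambda) \supseteq r(\SL_2) = G$ for generic $\lambda$, and since $\Hg(\sV)$ is the maximum of the Hodge groups of its fibres one obtains $\Hg(\sV) = G$.

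Finally, the central element $-I \in \SL_2$ acts as the non-trivial scalar $-1$ on ${\rm Sym}^3 H^1$, so $r|_{\SL_2}$ is injective and $G \cong \SL_2$ as $\Q$-algebraic groups. Its centre $\{\pm I\}$ is discrete, and the associated Hermitian symmetric domain of $\Hg(\sV)(\R)^{+} \cong \SL_2(\R)$ is the upper half plane, i.e.\ $\B_1$. These are exactly the two defining properties of a Calabi-Yau like $VHS$ of third type in $\ref{cyarhg}$. The main subtlety lies in the reverse inclusion, where one must combine the classical identification of the generic Hodge group of the Legendre family with $\SL_2$ and the functoriality of Hodge groups under the polynomial morphism $r$; once this is in place, the upper bound $\dim G \leq 3$ already established forces $\Hg(\sV) = G \cong \SL_2$ and the third-type conclusion follows at once.
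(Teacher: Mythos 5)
Your proof is correct, but it takes a genuinely different route from the paper's. The paper only uses the easy inclusion $\Hg(\sV)\subseteq G$ together with Lemma \ref{hogi} ($\dim G\leq 3$): since $h'(S^1)\subset G_{\R}$, conjugation by $h'(i)$ gives a Cartan involution, so $G$ is reductive; being noncommutative of dimension at most $3$ it must be simple of dimension exactly $3$ with discrete center, and the associated domain is then forced to be $\B_1$. You instead prove the sharper statement $\Hg(\sV)=G\cong\SL_2$: the forward inclusion as in the paper, and the reverse inclusion by taking a very general non-CM fibre of the Legendre family, where $\Hg(h_\lambda)=\SL(H^1(\sE_\lambda,\Q))$, and pulling back any $\Q$-group containing $h'_\lambda(S^1)$ along $r$ — a correct use of the minimality definition of the Hodge group, since $r^{-1}(H)$ is a $\Q$-subgroup whose real points contain $h_\lambda(S^1)$. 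Your identification $G=r(\SL_2)\cong\SL_2$ is also fine: images of homomorphisms of algebraic groups in characteristic zero are Zariski closed, $\SL_2(\Q)$ is Zariski dense in $\SL_2$, and the kernel of the symmetric cube is trivial because $-I$ acts by $-1$. What your approach buys is the exact generic Hodge group (not just an upper bound on its dimension) and it dispenses entirely with the explicit polynomial relations of Lemma \ref{hogi}; what it costs is the reliance on the classical fact that a non-CM elliptic curve has Hodge group $\SL_2$ and on the density of non-CM fibres in the Legendre family, which the paper's purely group-theoretic argument avoids. One small point worth making explicit in your last step: the domain attached to $\Hg^{\ad}(\sV)(\R)^+\cong\PSL_2(\R)$ is the quotient by the maximal compact subgroup fixed by the Cartan involution coming from $h'(i)$, which is $\SO(2)$ up to center, giving the upper half plane $\cong\B_1$ as you say.
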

\begin{proof}
Since $h'(S^1) \subset G_{\R}$, the conjugation by $h'(i)$ yields a Cartan involution of $G_{\R}$. Thus $G$ is reductive. Since $\dim G \leq 3$, this group is not only reductive, but simple. This follows from the fact that the smallest simple Lie algebras have dimension 3 and $G$ is clearly not commutative. Therefore the center of $G_{\R}$ is discrete the associated hermitian symmetric domain is $\B_1$. Hence $\sV$ is of third type.
\end{proof}

\section*{Acknowledgements}
This paper was written at the Graduiertenkolleg ``Analysis, Geometry and Stringtheory'' at Leibniz Universit\"at Hannover. I would like to thank Klaus Hulek for his interest and his comments, which helped to improve this text. Moreover I would like to thank Martin M\"oller for a fruitful discussion and Eckart Viehweg for the hint to \cite{VZ}, which helped to find a Shimura curve with a Calabi-Yau like $VHS$ of third type.


\begin{thebibliography}{XXX} 
\bibitem{Bc} Borcea, C.: Calabi-Yau threefolds and complex multiplication. Essays on mirror
manifolds. Internat. Press,  Hong Kong (1992) 489-502.
\bibitem{BG} Bryant, R., Griffiths, P.: Some Observations on the Infinitesimal Period Relations for Regular Threefolds with Trivial Canonical Bundle. In: Arithmetic and Geometry {\bf II}, Progress in Mathematics {\bf 36}, Birkh\"auser, Boston Basel Stuttgart (1983) 77-102.
\bibitem{De2} Deligne, P.:  Travaux de Shimura. In: Seminaire Bourbaki,
{\bf 389} (1970/71), Lecture Notes in
Mathematics {\bf 244},  Springer-Verlag,  Berlin (1971)  123-165.
\bibitem{Dat} Deligne, P.:  Vari\'et\'es de Shimura: interpr\'etation modulaire, et techniques de
construction de mod\`eles canoniques. In: Automorphic forms, representations and $L$-functions
(Proc. Sympos. Pure Math., Oregon State Univ., Corvalis, Ore., 1977), Part 2, Proc. Sympos. Pure
Math., XXXIII, AMS, Providence, R.I. (1979) 247-289.
\bibitem{DM2} Doran, C., Morgan, J.: Mirror Symmetry and Integral Variations of Hodge Structure. Stud. in adv. Math. {\bf 38} (2006) 517-538.
\bibitem{Helga} Helgason, S.: Differential Geometry, Lie Groups, and Symmetric Spaces.
Graduate Studies in Mathematics {\bf 34}, AMS (2001) Providence, RI.
\bibitem{Milne} Milne, J. S.: Introduction to Shimura Varieties. (2004) {\em preprint}.
\bibitem{Mor} Morrison, D.: Compactifications of moduli spaces inspired by mirror symmetry. Journ\'ees de
g\'eom\'etrie alg\'ebrique d\'{ }Orsay, Ast\'erisque {\bf 218} (1993) 243-271.
\bibitem{Mo} Moonen, B.: Linearity properties of Shimura varieties. Part {\bf I}, J. Algebraic Geom.
{\bf 7} (1998) 539-567.
\bibitem{JCR} Rohde, J. C.: Cyclic coverings, Calabi-Yau manifolds and Complex multiplication. Lecture Notes in Mathematics {\bf 1975}, Springer-Verlag (2009) Berlin, Heidelberg.
\bibitem{JCR3} Rohde, J. C.: Maximal automorphisms of Calabi-Yau manifolds versus maximally unipotent monodromy. (2009) arXiv:0902.4529v2 {\em to appear in manuscripta mathematica}.
\bibitem{Sat} Satake, I.: Algebraic structures of symmetric domains.  Kan\^o Memorial
Lectures {\bf4}, Iwanami Shoten, Princeton University Press (1980) USA.
\bibitem{VZ} Viehweg, E., Zuo K.: Families over curves with a strictly maximal Higgs field.
Asian J. Math. {\bf 7} (2003) 575-598.
\bibitem{Voi} Voisin, C.: Th\'eorie de Hodge et g\'eom\'etrie alg\'ebrique complexe.
Cours sp\'ecialis\'es {\bf 10}, SMF (2002) France.
\end{thebibliography}
\end{document}